\documentclass[11pt,reqno]{amsart}

\usepackage{bm}
\usepackage{amsmath,amssymb,amsfonts,amsopn}
\usepackage{graphicx}
\usepackage{subfigure}
\usepackage{listings}
\usepackage{multirow}
\usepackage{enumitem}
\usepackage{algorithm}
\usepackage{algorithmic}
\usepackage{booktabs}
\usepackage{appendix}
\usepackage{comment}
\usepackage{xcolor}
\usepackage{hyperref}
\usepackage[capitalize,nameinlink]{cleveref}

\ifpdf
  \DeclareGraphicsExtensions{.eps,.pdf,.png,.jpg}
\else
  \DeclareGraphicsExtensions{.eps}
\fi

\newtheorem{theorem}{Theorem}[section]
\newtheorem{lemma}[theorem]{Lemma}
\newtheorem{proposition}[theorem]{Proposition}
\newtheorem{corollary}[theorem]{Corollary}
\theoremstyle{definition}

\newtheorem{example}[theorem]{Example}

\theoremstyle{remark}
\newtheorem{remark}[theorem]{Remark}

\crefname{hypothesis}{Hypothesis}{Hypotheses}
\newcommand{\ignore}[1]{}

\title[IMEX-SDIRK2 mr-ccSAV scheme for Navier--Stokes]{An Efficient IMEX-SDIRK2 mr-ccSAV Scheme for the Forced Navier--Stokes Equations with Uniform-in-Time Enstrophy Bounds}

 \author{Honglin Liao}
 \address{School of Mathematics, Nanjing University of Aerospace and Astraunautics, Nanjing 211106, China}
 \email{liaoh@nuaa.edu.cn}
 \author{Haifeng Wang}
 \address{School of Mathematical Sciences, Eastern Institute of Technology, Ningbo, Zhejiang 315200, China}
 \email{hfwang@eitech.edu.cn}
 \author{Xiaoming Wang}
 \address{School of Mathematical Sciences, Eastern Institute of Technology, Ningbo, Zhejiang 315200, China.}
 \email{wxm@eitech.edu.cn}

 \thanks{The authors are listed in alphabetical order and share first
authorship. Author 3 is the corresponding author.}

\makeatletter
\@namedef{subjclassname@2020}{\textup{2020} Mathematics Subject Classification}
\makeatother
\subjclass[2020]{Primary 65M12, 65L04; Secondary 65L05, 76D05}
\keywords{Navier--Stokes equations, implicit--explicit Runge--Kutta method, singly diagonally implicit Runge--Kutta method (SDIRK), mean-reverting scalar auxiliary variable, concurrent correction, uniform-in-time estimates, optimal convergence, variable time steps}

\hypersetup{
  hidelinks,
  pdftitle={An Efficient and Uniform-in-time Bounded IMEX-SDIRK2 Mean-Reverting Concurrent-Correction SAV Scheme for the Navier--Stokes Equations},
  pdfauthor={Honglin Liao, Haifeng Wang, and Xiaoming Wang}
}

\begin{document}

\begin{abstract}
We propose and analyze an IMEX-SDIRK2 mean-reverting concurrent-correction scalar auxiliary variable (mr-ccSAV) method for the forced two-dimensional periodic Navier--Stokes equations in vorticity form. The viscous term is treated by Alexander's SDIRK2 method and advection explicitly. Each stage requires two elliptic solves with the same shifted Laplacian and the solution of either a cubic or a linear scalar algebraic equation. For initial vorticity in $\dot L^s(\Omega)$, $s>2$, a stage solution exists for every positive time step; uniqueness is established separately under an explicit small-step condition. The principal result is a direct, unconditional uniform-in-time enstrophy bound for arbitrary positive time steps. For persistently bounded forcing, this estimate is absorbing: the influence of the initial data decays, and the forcing contribution does not accumulate in time. Under additional regularity, uniformly bounded step sizes, and bounded neighboring step ratios, we also establish uniform-in-time $H^1$ and $H^2$ vorticity bounds without a small-step condition. For smooth solutions, the method converges optimally at second order. Numerical experiments confirm its accuracy, long-time robustness, and effectiveness of a companion embedded time-step selector.
\end{abstract}

\maketitle

\section{Introduction}
The incompressible Navier--Stokes equations (NSE) are a fundamental model for viscous incompressible flows and a canonical testbed for long-time numerical integration. We consider the forced two-dimensional periodic problem in vorticity--streamfunction form
\begin{equation}\label{eq:intro-nse}
\omega_t-\nu\Delta\omega+B(\omega)=f,\qquad
B(\omega)=\nabla^\perp\psi\cdot\nabla\omega,
\qquad -\Delta\psi=\omega,
\end{equation}
on $\Omega=(0,2\pi)^2$, with zero spatial mean. This formulation eliminates the pressure, automatically enforces incompressibility, and retains the dissipative structure of the velocity formulation. In particular,
\begin{equation}\label{eq}
\langle B(\omega),\omega\rangle=0,
\end{equation}
which is central to the enstrophy balance and the long-time dynamics of the two-dimensional NSE.

Long-time boundedness is stronger than finite-time stability. In particular, an estimate of the form
$
E^N\le E^0+C\sum_{n=0}^{N-1}\tau_n|f_n|^2
$
may be unconditional in the time steps but generally grows with the terminal time under persistent bounded forcing. Here, a uniform-in-time estimate means a bound independent of $N$ and $t_N$, whereas an absorbing estimate additionally requires the influence of the initial data to diminish as $t_N\to\infty$. Constructing such estimates is a challenge when nonlinear advection is treated explicitly.

Implicit--explicit discretizations are attractive because the viscous term can be treated implicitly while advection is evaluated explicitly, leaving only linear elliptic or Stokes-type solves. Foundational Runge--Kutta convergence theory for semilinear parabolic problems, including the NSE, was developed in \cite{lubich1996runge}; related maximal-regularity theory appears in \cite{kunstmann2018runge}. Accuracy and energy behavior of Runge--Kutta methods for incompressible flow, including differential--algebraic effects in velocity--pressure formulations, were studied in \cite{sanderse2012accuracy,sanderse2013energy}. Variable-step $G$-stability and second-order convergence of a fully implicit DLN discretization were established in \cite{layton2022dln}. For nonzero forcing, however, its stated energy estimate contains the accumulated forcing contribution and hence does not provide an absorbing uniform-in-time bound for persistently forced flows.

Rigorous convergence results for schemes with explicit or extrapolated advection have also been obtained in several settings. The fixed-step Crank--Nicolson/Adams--Bashforth method was analyzed in \cite{he2007cnab}, and optimal convergence of linearly extrapolated Crank--Nicolson methods was established in \cite{li2021second}. Low-regularity results include \cite{li2022critical,li2022semi}, while sharp variable-step IMEX-BDF2/SAV estimates for the unforced periodic NSE were derived in \cite{di2023variable}. For periodic NSE, high-order BDF-IMEX/SAV and related splitting schemes were analyzed in \cite{huang2022stability,huang2023stability,huang2025stability}. In the Runge--Kutta setting, \cite{li2025numerical} recently proved second-order fully discrete convergence for an IMEX-RK finite-element method with extrapolated convection for an unforced no-slip problem. These results provide important convergence benchmarks, but they do not simultaneously address a variable-step RK discretization, persistent forcing, explicit advection, and a direct uniform-in-time absorbing estimate.

Scalar auxiliary variable methods provide a flexible mechanism for constructing linearly implicit energy-stable schemes \cite{ShenXuYang2018SAV,SHenXuYang2019SAV}. For the NSE, several SAV-type methods exploit cancellation between the physical and scalar equations, including the ZEC-related schemes in \cite{yang2021new,yang2021novel_1,zhang2024unified,li2022new}. Related primitive-variable developments include the EMAC--ESAV schemes of \cite{lan2025robust}, the GePUP-ES formulation of \cite{li2025gepup}, and dynamically regularized Lagrange-multiplier methods \cite{doan2025dynamically,doan2026convergence,hoang2026error}. The concurrent-correction SAV construction of Hou and Qiao \cite{hou2023implicit} uses a first-order auxiliary variable to produce a higher-order correction of the physical variable. Mean reversion controls drift of the auxiliary variable in forced long-time simulations. Their combination, termed the mean-reverting concurrent-correction scalar auxiliary variable (mr-ccSAV) mechanism, was introduced for ETD discretizations in \cite{wang2026unconditionally}; the present work develops its Runge--Kutta counterpart.

Especially relevant is the work of Liao \textit{et al.}~\cite{liao2026longtime}. Their high-order IMEX-RK schemes use a neighboring-stage formulation, treat diffusion implicitly and advection explicitly, and yield uniform-in-time bounds for the periodic vorticity equation under a time-step restriction. Their analysis closes the $L^2$--$H^1$ estimates through a stagewise $H^\delta$ bootstrap, with the admissible time step depending on the resulting fractional-regularity bound and the system parameters. Related conditional long-time estimates for explicit-advection schemes were obtained in \cite{wang2012efficient,tone2015double,cheng2016long}. These arguments exploit the favorable vorticity or scalar-transport structure and do not directly yield a primitive-variable $H^1$ theory.

The present method has a different basic energy mechanism. The mr-ccSAV equation is coupled to the Runge--Kutta stages so that the explicitly evaluated advection cancels algebraically from the discrete enstrophy balance. Mean reversion then supplies damping for the scalar component. Consequently, the $L^2$ estimate closes directly, without a higher-regularity bootstrap, a time-step upper bound, or a neighboring-step-ratio restriction. For persistent forcing in $L^\infty(0,\infty;L^2)$, the forcing contribution remains bounded rather than accumulating with the terminal time, and the influence of the initial data decays. The mild assumption $\omega^0\in\dot L^s(\Omega)$, $s>2$, is used only to make the first explicit transport datum and the scalar stage coefficients well defined; the resulting $L^2$ bound depends on $\omega^0$ only through $|\omega^0|_{L^2}$. 
The uniform-in-time $L^2$ estimate also has a primitive-variable kinetic-energy analogue, although the detailed solvability, convergence, and numerical analysis in this paper are carried out in the periodic vorticity setting.

We propose an IMEX-SDIRK2-mr-ccSAV method based on Alexander's second-order SDIRK discretization of the viscous term and explicit extrapolation of advection. At each stage, an affine decomposition reduces the coupled problem to two elliptic solves with the same shifted Laplacian and the solution of either a cubic or a linear scalar algebraic equation. An important analytical ingredient is the positivity of the row-difference matrix $E^{-1}A^{\rm RK}$ associated with the implicit Butcher matrix $A^{\rm RK}$ \cite{liao2026longtime}.  Related row-difference positivity appeared earlier in \cite{shin2017unconditionally}. In the present setting, positivity of
\begin{equation}\label{eq}
S(E^{-1}A^{\rm RK})
:=
\frac12\left(
E^{-1}A^{\rm RK}+(E^{-1}A^{\rm RK})^T
\right)
\end{equation}
controls the viscous contribution, while the mr-ccSAV identity cancels the explicit advection. The convergence proof also uses the reference-stage-function approach of \cite{zhang2004error}.

The main results are as follows.
\begin{enumerate}
\item We construct a second-order IMEX-SDIRK2-mr-ccSAV method with explicit advection. Each stage requires two shifted elliptic solves and either a cubic or a linear scalar algebraic equation.
\item For $\omega^0\in\dot L^s(\Omega)$, $s>2$, a stage solution exists for every positive time step. Unique solvability is established separately under an explicit small-step condition.
\item For arbitrary positive time steps, we prove a direct unconditional $L^\infty(0,\infty;L^2)$ estimate on the vorticity. Under persistent bounded forcing, the estimate is absorbing and does not contain an accumulated forcing term.
\item Under additional regularity, a finite upper bound on the time steps, and bounded neighboring-step ratios, we establish uniform-in-time $H^1$ and $H^2$ vorticity bounds without a small-step condition.
\item For smooth forced solutions, we prove optimal second-order convergence on finite time intervals. Numerical experiments confirm accuracy preservation, long-time robustness, and effective time-step selection with a companion embedded scheme.
\end{enumerate}

To the best of our knowledge, this is the first variable-step RK-type discretization of the forced periodic NSE that treats advection explicitly and simultaneously admits guaranteed stagewise solvability, a direct unconditional uniform-in-time $L^2$ absorbing estimate, and a rigorous optimal temporal convergence rate.

The remainder of the paper is organized as follows. Section~\ref{sec:2} introduces the periodic vorticity setting and the mr-ccSAV reformulation. Section~\ref{sec:algorithm} presents the algorithm and its stagewise implementation. The subsequent sections establish solvability, uniform-in-time bounds, and finite-time convergence. Numerical experiments and concluding remarks follow.

\ignore{
The incompressible Navier--Stokes equations (NSE) are a fundamental model for viscous incompressible flows and provide a canonical testbed for the design of long-time stable numerical algorithms.  In this paper we consider the two-dimensional periodic problem in vorticity--streamfunction form
\begin{equation}\label{eq:intro-nse}
\omega_t-\nu\Delta\omega+B(\omega)=f,\qquad
B(\omega)=\nabla^\perp\psi\cdot\nabla\omega,
\qquad -\Delta\psi=\omega,
\end{equation}
posed on $\Omega=(0,2\pi)^2$ with zero spatial mean.  This formulation eliminates the pressure and automatically enforces incompressibility.  It also preserves the essential dissipative structure of the velocity formulation.  In particular, the nonlinear term satisfies the skew-symmetry relation
\begin{equation}\label{eq:intro-skew}
\langle B(\omega),\omega\rangle=0,
\end{equation}
which is central to the uniform-in-time bounds, absorbing sets, global attractors, and invariant-measure theory of the two-dimensional NSE.

Long-time integration of dissipative fluid equations presents a challenge that is different from short-time accuracy.  A numerical method may be consistent and convergent on fixed time intervals but still produce artificial energy growth, spurious instability, or distorted statistical behavior over long time intervals.  This is particularly important when the goal is to approximate the climate of the model, such as time averages, recurrence of bursts, invariant measures, or other long-time statistical observables.  It is therefore desirable to design schemes that inherit, at the discrete level, the uniform-in-time boundedness of the continuous problem.

Implicit--explicit time discretizations are attractive for incompressible flow because the viscous term can be treated implicitly while nonlinear advection is evaluated explicitly. This leads to efficient algorithms based on linear elliptic or Stokes-type solves. Rigorous stability and convergence results for such discretizations span several distinct settings. Classical Runge--Kutta error analysis for semilinear parabolic problems, including the NSE, was developed in \cite{lubich1996runge}; discrete maximal-regularity analysis for implicit Runge--Kutta discretizations of general nonlinear parabolic problems was subsequently developed in \cite{kunstmann2018runge}. Accuracy and energy behavior of Runge--Kutta discretizations of incompressible flow, including differential--algebraic effects in velocity--pressure formulations, were studied in \cite{sanderse2012accuracy,sanderse2013energy}. For periodic NSE, high-order BDF-IMEX/SAV schemes with explicit or semi-explicit nonlinear treatment and rigorous optimal error estimates were analyzed in \cite{huang2022stability}; related second- and higher-order splitting analyses appear in \cite{huang2023stability,huang2025stability}. In a complementary primitive-variable direction, a GePUP-ES projection formulation for no-slip NSE and combined explicit convection with algebraically stable Runge--Kutta time integration was developed in \cite{li2025gepup}. They proved formulation equivalence, kinetic-energy stability, and decay of divergence defects, and demonstrated fourth-order fully discrete accuracy numerically; their paper does not provide the PDE-level optimal discretization-error estimate or the forced uniform-in-time absorbing estimate considered here.

A distinct non-RK literature provides important convergence benchmarks for explicit or extrapolated convection. \cite{he2007cnab} analyzed a fixed-step Crank--Nicolson/Adams--Bashforth mixed finite-element method with explicit convection and proved stability and optimal convergence under a time-step restriction independent of the spatial mesh but dependent on the problem data and parameters. \cite{li2021second} proved optimal second-order convergence in time and space for a linearly extrapolated Crank--Nicolson finite-element method with $H^1$ initial data and locally refined initial time steps; their analyzed NSE is unforced and posed with homogeneous no-slip boundary conditions. Low-regularity results include first-order convergence for a semi-implicit finite-element method with critical $L^2$ initial data \cite{li2022critical} and viscosity-independent first-order convergence for a semi-implicit exponential integrator \cite{li2022semi}. For the unforced periodic NSE, \cite{di2023variable} proved sharp variable-step IMEX-BDF2/SAV error estimates. These works are useful explicit-convection benchmarks, but they are not Runge--Kutta analyses. In the RK setting,   \cite{li2025numerical} recently proved second-order fully discrete convergence for an IMEX-RK finite-element method with extrapolated convection for an unforced no-slip problem and nonsmooth initial data; their setting and stability mechanism are different from those considered here.

Scalar auxiliary variable methods, originally developed by Shen, Xu and Yang \cite{ShenXuYang2018SAV,SHenXuYang2019SAV}, provide a powerful mechanism for constructing linearly implicit energy-stable schemes. SAV schemes for the Navier--Stokes equations can treat nonlinear terms explicitly while retaining unconditional energy stability, and high-order IMEX/SAV methods have been developed for periodic NSE and related dissipative systems. The schemes that explicitly take advantage of the skew-symmetry of the nonlinear advection term in the stability analysis, sometimes termed ZEC, are particularly inspiring \cite{yang2021new, yang2021novel_1, zhang2024unified, li2022new}. For phase-field gradient flows, optimal spatial $L^2$ error bounds and $q$th-order temporal convergence for fully discrete $q$-stage extrapolated RK--SAV/DG schemes was derived in \cite{tang2022_rksav_dg}. This is a rigorous high-order RK--SAV convergence precedent, but it concerns the Allen--Cahn and Cahn--Hilliard equations rather than the NSE. Closely related primitive-variable NSE approaches include the stabilized EMAC--ESAV finite-element schemes \cite{lan2025robust}, who proved first- and second-order optimal velocity--pressure error estimates with Gronwall constants independent of the viscosity. In addition, first- and second-order dynamically regularized Lagrange-multiplier schemes that are unconditionally energy stable in the original variables and reduce each step to two linear Stokes solves and a uniquely solvable scalar quadratic equation were developed in \cite{doan2025dynamically}; rigorous first-order temporal and fully discrete error analyses were subsequently given in \cite{doan2026convergence,hoang2026error}. These works concern fixed-step BDF-based discretizations and finite-time error analysis rather than a variable-step Runge--Kutta method with a direct uniform-in-time absorbing estimate.

Nevertheless, standard SAV formulations may introduce auxiliary variables whose discrete dynamics drift over long time, particularly in forced systems or when the continuous auxiliary variable is intended to remain at an equilibrium value. Mean-reverting SAV mechanisms counteract this drift by adding a linear restoring force, while the concurrent-correction SAV idea of Hou and Qiao \cite{hou2023implicit} uses a nonlinear correction factor so that an $O(\tau)$ perturbation in the auxiliary variable produces only a higher-order perturbation in the physical equation. Their combination, termed the mean-reverting concurrent-correction scalar auxiliary variable (mr-ccSAV) mechanism, was introduced in the ETD setting in \cite{wang2026unconditionally}. The present work develops its Runge--Kutta counterpart.

Especially relevant is the work of Liao \textit{et al.}\ \cite{liao2026longtime}, whose high-order IMEX-RK schemes use a neighboring-stage incremental formulation, treat diffusion implicitly and advection explicitly, and obtain uniform-in-time periodic vorticity bounds. Their $L^2$--$H^1$ argument is closed through a stagewise $H^\delta$ bootstrap, and the admissible time step depends on the resulting fractional-regularity bound and the system parameters. Conditional uniform-in-time stability under analogous data- and parameter-dependent step restrictions was established earlier for schemes with implicit diffusion and explicit advection in the periodic NSE and related geophysical models \cite{wang2012efficient,tone2015double,cheng2016long}. These arguments exploit the favorable vorticity or scalar-advection structure and do not transfer directly to a primitive velocity--pressure $H^1$ analysis, where pressure, boundary terms, and stronger nonlinear commutators must also be controlled.

The distinction in the present work is at the basic energy level. The mr-ccSAV equation is coupled to the Runge--Kutta stages so that the explicitly treated convection cancels algebraically from the discrete energy balance while the mean-reverting SAV ensures the long-time uniform bound on the scalar part. 
Consequently, the variable-step $L^2$ absorbing estimate closes without a higher-regularity bootstrap or an upper time-step restriction. A mild
assumption $\omega^0\in\dot L^s(\Omega)$ for some $s>2$ is used
only to ensure that the first explicitly evaluated advection term belongs
to $V'$ and hence that the scalar stage coefficients are well defined.
The resulting uniform-in-time $L^2$ bound depends on the initial data
only through $\|\omega^0\|_{L^2}$ and does not involve
$\|\omega^0\|_{L^s}$.
 The same cancellation and scalar mean reversion have a velocity--pressure energy analogue, although the detailed solvability, convergence, and numerical analysis here remain periodic and vorticity based. This separates restrictions needed to define or accurately approximate the stage equations from restrictions needed merely to prevent numerical blow-up.

We propose a variable-step IMEX singly diagonally implicit Runge--Kutta mean-reverting concurrent-correction scalar auxiliary variable method, abbreviated as IMEX-SDIRK2-mr-ccSAV.  The method uses the Alexander second-order SDIRK discretization of the linear diffusion term and an explicit Runge--Kutta treatment of the advection term.  For the second-order method, the correction factors satisfy $G_\omega(r)=(1-r)G_r(r)$; this identity produces exact cancellation of the nonlinear term in the discrete $L^2$ estimate. At each stage, an affine decomposition reduces the coupled system to two shifted elliptic solves with the same operator and either a cubic or linear scalar algebraic equation. Hence, the method is computationally close to the underlying IMEX-SDIRK2 method while adding a scalar correction that enforces long-time stability. The precise correction factors are introduced in Section~\ref{sec:2}. The IMEX-SDIRK2 scheme itself can be viewed as a special case of the IMEX-RK schemes proposed in \cite{liao2026longtime}.

An important ingredient is the neighboring-stage formulation of Liao et al.
\cite{liao2026longtime}, which introduces the row-difference matrix
\(E^{-1}A^{\rm RK}\) for the implicit viscous discretization. Related
row-difference positivity appeared earlier in the convex-splitting RK
analysis \cite{shin2017unconditionally}. Here, the
positivity of
\begin{equation}\label{eq:intro-incremental-positivity}
S(E^{-1}A^{\rm RK})
:=\frac12\left(E^{-1}A^{\rm RK}+(E^{-1}A^{\rm RK})^T\right)
\end{equation}
together with the mr-ccSAV yields the discrete energy estimate.
We also utilized the reference stage function approach introduced by Zhang and Shu in \cite{zhang2004error} to derive the optimal convergence rate.

\ignore{
A distinctive point of the analysis is the use of the neighboring-stage (incremental) formulation developed in \cite{liao2026longtime}. Writing the Runge--Kutta method in internal-difference form exposes the matrix $E^{-1}A^{\rm RK}$, where $A^{\rm RK}$ is the standard implicit Butcher matrix, and allows the energy estimate to use the positivity of
\begin{equation}\label{eq:intro-incremental-positivity}
S(E^{-1}A^{\rm RK}):=\frac12\left(E^{-1}A^{\rm RK}+(E^{-1}A^{\rm RK})^T\right).
\end{equation}
Here it is combined with the mr-ccSAV approach to obtain an unconditional variable-step $L^2$ estimate. In particular, no upper bound on the time step is required for the $L^2$ absorbing estimate. The detailed solvability and convergence analysis and all numerical experiments are carried out in the periodic vorticity setting. The uniform-in-time $L^2$ bound analysis itself also has a primitive-variable kinetic-energy analogue when a suitable skew-symmetric convective form is used; we record this only as structural compatibility and do not claim primitive-variable uniqueness, convergence, or uniform $H^1$ stability. Periodic higher-regularity estimates are likewise separated from the unconditional $L^2$ result and require additional assumptions.
}

The main contributions of this paper are as follows.
\begin{enumerate}
\item We construct a variable-step IMEX-SDIRK2-mr-ccSAV method for the two-dimensional periodic NSE.  The viscous term is treated implicitly, the nonlinear advection is treated explicitly, and each stage requires only two shifted elliptic solves and one scalar algebraic equation that is either cubic or linear.
\item Assuming $\omega^0\in\dot L^s(\Omega)$ for some $s>2$,
we prove stagewise existence for arbitrary positive time steps. 
Unique solvability is
treated separately and follows under a sufficient small-step condition.
\item We establish an unconditional $L^\infty(0,\infty; L^2)$ estimate for arbitrary positive variable time steps. 
The bound depends on the initial data only through its $L^2$ norm and
does not depend on the additional $L^s$ regularity used for stagewise
solvability.
\item Assuming additional regularity and bounded mesh parameters, we derive $L^\infty(0,\infty;H^1)$ and $L^\infty(0,\infty; H^2)$ bounds without a small-step restriction. The $H^2$ bound controls the stage increments in the convergence proof. 
\item For smooth forced solutions, we prove a second-order finite-time convergence estimate. 
\item Numerical experiments show second-order convergence, verify nonintrusiveness in the accuracy regime, and demonstrate improved robustness in large-step, long-time integrations. An embedded adaptive controller responds to changes in solution activity and, in the reported periodic-flow tests, achieves accuracy comparable to suitable fixed-step runs at a lower computational cost. Boundedness is nevertheless distinguished from accuracy outside the resolved regime.
\end{enumerate}
To the best of our knowledge, this is the first RK-type
discretization of the forced periodic NSE combining 
explicit advection, guaranteed stagewise solvability,
a direct all-time $L^2$ bound for arbitrary positive steps, and a rigorous optimal second-order temporal estimate.

The remainder of the paper is organized as follows.  Section~\ref{sec:2} introduces the periodic vorticity setting and the mean-reverting concurrent-correction SAV reformulation.  Section~\ref{sec:algorithm} presents the variable-step IMEX-SDIRK2-mr-ccSAV algorithm and its stagewise implementation.  The following sections establish stagewise solvability, uniform-in-time stability, and finite-time convergence.  Numerical experiments on convergence, large-step robustness, adaptive time stepping, and long-time statistics are followed by concluding remarks and directions for future work.
}

\section{Preliminaries and the mean-reverting ccSAV reformulation of the NSE}\label{sec:2}

In this section, we first fix the functional setting for the periodic vorticity formulation of the incompressible Navier--Stokes equations. We then introduce the mean-reverting concurrent-correction SAV reformulation, which serves as the basis for the numerical scheme and for the subsequent stability and convergence analysis. The corresponding primitive-variable energy compatibility is discussed separately in Remark~\ref{rem:primitive-variable}; it is not part of the periodic solvability or convergence theory.

\subsection{Function spaces and notation}
We work on the periodic domain $\Omega=(0,2\pi)\times(0,2\pi)$ and use  zero-mean periodic Sobolev spaces. More specifically, let $\mathcal T_0$ denote the space of mean-zero trigonometric
polynomials on $\Omega=\mathbb T^2$.  For an integer $\ell\ge0$ and
$1<s<\infty$, define
\[
 \dot W_{\rm per}^{\ell,s}(\Omega)
 :=
 \overline{\mathcal T_0}^{\,W^{\ell,s}(\Omega)}.
\]
Thus the dot indicates zero spatial mean.  For negative integer orders,
we use the duality convention
\[
 \dot W_{\rm per}^{-\ell,s}(\Omega)
 :=
 \bigl(\dot W_{\rm per}^{\ell,s'}(\Omega)\bigr)',
 \qquad \frac1s+\frac1{s'}=1.
\]
In particular,
\[
 H=\dot W_{\rm per}^{0,2}(\Omega),\qquad
 V=\dot W_{\rm per}^{1,2}(\Omega),\qquad
 V'=\dot W_{\rm per}^{-1,2}(\Omega).
\]
We write $\dot L^s=\dot W_{\rm per}^{0,s}$.
Standard periodic Sobolev embeddings will be used below; see
\cite{AdamsFournier2003,Hebey2000}.

\ignore{
$W^{l,s}$. In particular, we define, for any real $s\ge 1$.
\[
\dot L^s(\Omega):=
\{\phi\in L^s(\Omega): \int_\Omega\phi=0
\}
\]
and, for any real $s\in \mathbb{N}^+$,
\[
\dot H_{\rm per}^s(\Omega)=\dot{W}^{s,2}_{\rm per}(\Omega):=
\left\{\phi\in H_{\rm loc}^s(\mathbb R^2):
2\pi\text{-per. in each direction},\int_\Omega\phi=0
\right\}.
\]
We denote by $\dot H_{\rm per}^{-1}(\Omega)$ the dual of
$\dot H_{\rm per}^1(\Omega)$. The notation $\langle\cdot,\cdot\rangle$
denotes either the $L^2$ inner product or, when appropriate, the induced
duality $\dot H_{\rm per}^{-1}$-
$\dot H_{\rm per}^1$ pairing, and $\|\cdot\|$ denotes the norm in $L^2(\Omega)$. We set
\begin{equation}
H:=\dot L^2(\Omega),
\qquad
V:=\dot H^1_{\rm per}(\Omega),
\qquad
V':=\dot H^{-1}_{\rm per}(\Omega).
\end{equation}
}

\subsection{Mean-reverting ccSAV reformulation}
Since $\psi=(-\Delta)^{-1}\omega$, define
\[
B(\omega)
:=\nabla^\perp\psi\cdot\nabla\omega
=\nabla^\perp(-\Delta)^{-1}\omega\cdot\nabla\omega.
\]
Then \eqref{eq:intro-nse} is $\omega_t-\nu\Delta\omega+B(\omega)=f$.
We introduce an auxiliary scalar $r(t)$ and the extended system
\begin{subequations}\label{eqn:mr_sav}
\begin{align}
    &\frac{\partial \omega}{\partial t} - \nu \Delta \omega + G_\omega(r)\,B(\omega) = f, \label{eqn:mr_sav_1_v2}\\
    &\frac{\mathrm{d} r}{\mathrm{d} t} + \gamma r + G_r(r) \langle B(\omega), \omega \rangle = 0, \label{eqn:mr_sav_2_v2}
\end{align}
\end{subequations}
where $\gamma>0$ is the mean-reversion parameter. 
For an integer $k\ge 1$, let
\begin{equation}\label{eqn:ccsav_factor}
G_\omega(r)=1-r^k,\qquad G_r(r) = \sum_{i = 0}^{k-1} r^i,
\end{equation}
so that $G_\omega(r)=(1-r)G_r(r)$.   
In this paper $k=2$, hence $G_\omega(r)=1-r^2$ and $G_r(r)=1+r$.

The periodic transport term satisfies
\[
 \langle B(\omega), \omega\rangle = 0,\qquad \forall\omega\in V.
\]
Consequently, the exact scalar equation is $r_t+\gamma r=0$, so
$r(t)=r(0)e^{-\gamma t}$ and $r\equiv0$ when $r(0)=0$. At the discrete
level, the damping suppresses drift away from this equilibrium. Meanwhile,
if $r=O(\tau)$, then $G_\omega(r)-1=-r^k=O(\tau^k)$, which makes the correction consistent with formal order-$k$ accuracy of the physical
variable. The combination of these mean-reverting and concurrent-correction
mechanisms, abbreviated as mr-ccSAV, was introduced for variable-step ETD
schemes in \cite{wang2026unconditionally}. It builds on the mean-reverting
SAV formulations of \cite{han2025highly,coleman2024efficient}, the
concurrent-correction idea of Hou and Qiao \cite{hou2023implicit}, and the zero-energy-preservation (ZEC) approach \cite{li2022new, yang2021novel, yang2021numerical}; see also
\cite{huang2022new} for related higher-order sequential correction strategies.

For context, $\gamma=0$ and $k=1$ recover the SAV-ZEC formulation used for
several fluid models \cite{yang2021new,yang2021novel,zhang2024unified,li2022new}.
The role of the seemingly redundant $\gamma>0$ here is to prevent accumulation of the auxiliary-variable
drift that may arise from explicit treatment of advection over long times.

\section{Algorithm} \label{sec:algorithm}
Building on the mr-ccSAV reformulation \eqref{eqn:mr_sav}, we construct a second-order singly diagonally implicit Runge--Kutta (SDIRK2) discretization in which the viscous and mean-reversion terms are treated implicitly, while the nonlinear advection term is treated explicitly. This yields the SDIRK2-mr-ccSAV scheme presented below. The solvability of the associated stage equations will be examined in detail in the next subsection.

\ignore{
Let
\[
\eta=1-\frac{1}{\sqrt2},
\qquad
\delta=1-\frac{1}{2\eta}.
\]
The implicit part is the standard stiffly accurate Alexander SDIRK2 method,
\begin{equation}\label{eq:standard-sdirk-tableau}
\begin{array}{c|cc}
\eta & \eta & 0\\
1 & 1-\eta & \eta\\
\hline
&1-\eta&\eta
\end{array},
\qquad
A^{\rm RK}:=
\begin{pmatrix}
\eta&0\\
1-\eta&\eta
\end{pmatrix}.
\end{equation}
The explicit companion evaluates the nonlinear and forcing terms at
$c_0=0$ and $c_1=\eta$ and has the standard tableau
\begin{equation}\label{eq:standard-explicit-tableau}
\begin{array}{c|ccc}
0&0&0&0\\
\eta&\eta&0&0\\
1&\delta&1-\delta&0\\
\hline
&\delta&1-\delta&0
\end{array}.
\end{equation}
For the two nontrivial stages, write
\[
\widehat A^{\rm RK}:=
\begin{pmatrix}
\eta&0\\
\delta&1-\delta
\end{pmatrix},
\qquad
E:=
\begin{pmatrix}
1&0\\
1&1
\end{pmatrix}.
\]
Following Liao et al \cite{liao2026longtime}, the neighboring-stage stability analysis uses the equivalent incremental
matrices
\begin{equation}\label{eq:incremental-rk-matrices}
A:=E^{-1}A^{\rm RK}
=\begin{pmatrix}
\eta&0\\
1-2\eta&\eta
\end{pmatrix},
\qquad
\widehat A:=E^{-1}\widehat A^{\rm RK}
=\begin{pmatrix}
\eta&0\\
\delta-\eta&1-\delta
\end{pmatrix}.
\end{equation}
Thus the standard and incremental forms describe the same IMEX method; the
latter merely rewrites each stage as an increment from its immediate
predecessor.}

{
\subsection{The SDIRK2-mr-ccSAV scheme}
\label{subsec:isdirk2_mr_ccsav}

Let
\[
\eta=1-\frac{1}{\sqrt{2}},
\qquad
\delta=1-\frac{1}{2\eta}.
\]
We first describe the underlying two-stage, second-order IMEX Runge--Kutta method in its standard form. 
The implicit part is the stiffly accurate Alexander SDIRK2 method, while
the explicit part evaluates the nonlinear and forcing terms at
$c_0=0$ and $c_1=\eta$. The corresponding IMEX pair and the coefficient matrices for the two nontrivial stages are
\begin{equation}\label{eq:standard-imex-tableau}
\begin{array}{c|ccc|ccc}
 & \multicolumn{3}{c|}{\text{implicit}}&
   \multicolumn{3}{c}{\text{explicit}}\\
0
 & 0      & 0        & 0
 & 0      & 0        & 0\\
\eta
 & 0      & \eta     & 0
 & \eta   & 0        & 0\\
1
 & 0      & 1-\eta   & \eta
 & \delta & 1-\delta & 0\\
\hline
 & 0      & 1-\eta   & \eta
 & \delta & 1-\delta & 0
\end{array},
\quad
A^{\mathrm{RK}}
=
\begin{pmatrix}
\eta&0\\
1-\eta&\eta
\end{pmatrix},
\quad
\widehat A^{\mathrm{RK}}
=
\begin{pmatrix}
\eta&0\\
\delta&1-\delta
\end{pmatrix}.
\end{equation} 

To express the method in terms of neighboring-stage increments, observe
that, for any stage sequence $\{Y_i\}_{i=0}^2$,
\[
\begin{pmatrix}
Y_1-Y_0\\
Y_2-Y_0
\end{pmatrix}
=
E
\begin{pmatrix}
Y_1-Y_0\\
Y_2-Y_1
\end{pmatrix},
\qquad
E:=
\begin{pmatrix}
1&0\\
1&1
\end{pmatrix}.
\]
Thus, subtracting each stage equation from its successor amounts to
multiplying the standard coefficient matrices by $E^{-1}$. Following the
neighboring-stage formulation of Liao {\it et al.}~\cite{liao2026longtime}, we
therefore introduce
\begin{equation}\label{eq:incremental-rk-matrices}
A:=E^{-1}A^{\mathrm{RK}}
=
\begin{pmatrix}
\eta&0\\
1-2\eta&\eta
\end{pmatrix},
\qquad
\widehat A:=E^{-1}\widehat A^{\mathrm{RK}}
=
\begin{pmatrix}
\eta&0\\
\delta-\eta&1-\delta
\end{pmatrix}.
\end{equation}
The standard and incremental formulations are therefore algebraically
equivalent; the latter is used below because it is more convenient for
neighboring-stage stability estimates.
}

Let $0=t_0<t_1<t_2<\cdots$ be an arbitrary temporal grid. For later
reference, define
\begin{equation}\label{eq:mesh-parameters}
\tau_n:=t_{n+1}-t_n,\qquad
\tau_*:=\sup_{n\ge0}\tau_n,\qquad
\rho_n:=\frac{\tau_{n+1}}{\tau_n},\qquad
\rho_*:=\sup_{n\ge0}\rho_n.
\end{equation}

On each interval $[t_n,t_{n+1}]$, the stage values
$\omega_{n,i}$ and $r_{n,i}$ approximate
$\omega(t_n+c_i\tau_n)$ and $r(t_n+c_i\tau_n)$, respectively, for
$i=0,1,2$. For notational convenience, set
\[
f_{n,j}:=f(t_n+c_j\tau_n),\quad
B_{n,j}:=B(\omega_{n,j})
=\nabla^\perp(-\Delta)^{-1}\omega_{n,j}\cdot\nabla\omega_{n,j},\quad j=0,1.
\]
\ignore{
Starting from $(\omega_{n,0},r_{n,0})=(\omega^n,r^n)$, the two internal stages are computed successively by the incremental IMEX Runge--Kutta scheme. More precisely, for $i=1,2$, we determine $(\omega_{n,i},r_{n,i})$ from
}
Starting from $(\omega_{n,0},r_{n,0})=(\omega^n,r^n)$, we compute the two
nontrivial stages successively using the incremental coefficients
$A=(a_{i,j})$ and $\widehat A=(\hat a_{i,j})$. More precisely, for
$i=1,2$, the stage values $(\omega_{n,i},r_{n,i})$ 
are determined by
\begin{subequations}\label{eqn:mr_sav_stage}
\begin{align}
\omega_{n,i} ={}& \omega_{n,i-1}
+\nu\tau_n\sum_{j=1}^{i}
 a_{i,j}\Delta\omega_{n,j}
+\tau_n\sum_{j=0}^{i-1}
\hat{a}_{i,j}
\bigl(f_{n,j}-G_\omega(r_{n,i})B_{n,j}\bigr), \label{eqn:isdirk2-mr-ccsav-omega}
\\
r_{n,i} ={}& r_{n,i-1} 
-\gamma\tau_n\sum_{j=1}^{i} a_{i,j}r_{n,j} -\tau_n\sum_{j=0}^{i-1} \hat a_{i,j}G_r(r_{n,i}) \langle B_{n,j},\omega_{n,i}\rangle, \label{eqn:isdirk2-mr-ccsav-r}
\end{align}
\end{subequations}
and the numerical solution at $t_{n+1}$ is defined by $(\omega^{n+1}, r^{n+1})=(\omega_{n,2}, r_{n,2})$, since the method is stiffly accurate. We refer to the resulting method as the IMEX SDIRK2-mr-ccSAV method, abbreviated as the {\bf SDIRK2-mr-ccSAV method}.

\subsection{Stagewise solvability}
In this subsection, we first decouple each coupled stage of
\eqref{eqn:mr_sav_stage} and derive a practical solution procedure
consisting of two linear elliptic problems with the same operator and
a scalar nonlinear equation. 
Based on this reduction, we establish stagewise existence for any $\tau_n>0$ and uniqueness under a sufficient step-size condition.

Fix a stage $i\in \{1,2 \}$ and suppose that all previous stage values are known. Define
\begin{equation}\label{eq:stage-explicit-data}
\widetilde f_{n,i}:=\sum_{j=0}^{i-1}\hat a_{i,j}f_{n,j},\qquad
\widetilde B_{n,i}:=\sum_{j=0}^{i-1}\hat a_{i,j}B_{n,j}.
\end{equation}
The $i$th stage vorticity equation can be written as
\begin{equation}\label{eq:stage-vorticity-solve}
\left(I-\nu\tau_n a_{i,i}\Delta\right)\omega_{n,i}
=R_{n,i}^\omega-\tau_nG_\omega(r_{n,i})\widetilde B_{n,i},
\end{equation}
where
\[
R_{n,i}^\omega:=
\omega_{n,i-1}+\nu\tau_n\sum_{j=1}^{i-1}a_{i,j}\Delta\omega_{n,j}
+\tau_n\widetilde f_{n,i}.
\]
Thus, the vorticity stage is decoupled by solving
\begin{equation}\label{eq:stage-two-solves}
\begin{aligned}
\left(I-\nu\tau_n  a_{i,i}\Delta\right)\omega_{n,i}^{(1)}=R_{n,i}^\omega,\quad \left(I-\nu\tau_n a_{i,i}\Delta\right)\omega_{n,i}^{(2)} =\widetilde B_{n,i}.
\end{aligned}
\end{equation}
For any scalar trial value $r$, the corresponding vorticity is
\begin{equation}\label{eq:stage-affine-vorticity}
\omega_{n,i}(r)=
\omega_{n,i}^{(1)}-\tau_nG_\omega(r)\omega_{n,i}^{(2)}.
\end{equation}

It remains to determine the scalar stage value $r_{n,i}$. Let
\[
R_{n,i}^r:=r_{n,i-1}-\gamma\tau_n\sum_{j=1}^{i-1}a_{i,j}r_{n,j}.
\]
The $r$-equation at the $i$th stage is
\begin{equation}\label{eq:stage-r-equation}
\left(1+\gamma\tau_n a_{i,i}\right)r_{n,i}
=
R_{n,i}^r-\tau_nG_r(r_{n,i}) \langle \widetilde B_{n,i},\omega_{n,i} \rangle.
\end{equation}
Define
\begin{equation}\label{eq:alpha-beta}
\alpha_{n,i}:= \langle \widetilde B_{n,i},\omega_{n,i}^{(1)} \rangle,
\qquad
\beta_{n,i}:= \langle \widetilde B_{n,i},\omega_{n,i}^{(2)} \rangle.
\end{equation}
Substituting \eqref{eq:stage-affine-vorticity} into
\eqref{eq:stage-r-equation} gives the scalar equation
\begin{equation}\label{eq:stage-scalar-equation}
\mathcal F_{n,i}(r):=
\left(1+\gamma\tau_n a_{i,i}\right)r-R_{n,i}^r
+\tau_nG_r(r)\left(\alpha_{n,i}-\tau_nG_\omega(r)\beta_{n,i}\right)=0.
\end{equation}
For the present second-order scheme, we set $k=2$ in
\eqref{eqn:ccsav_factor}, which gives $G_\omega(r)=1-r^2$ and
$G_r(r)=1+r$. Hence
\begin{subequations}\label{eq:stage-scalar-k2}
\begin{align}
\mathcal F_{n,i}(r)
&=
\left(1+\gamma\tau_n a_{i,i}\right)r-R_{n,i}^r
+\tau_n(1+r)\alpha_{n,i} + \tau_n^2(r^3+r^2-r-1)\beta_{n,i},\\
\mathcal F_{n,i}'(r)
&=
1+\gamma\tau_n a_{i,i}
+\tau_n\alpha_{n,i}+\tau_n^2\beta_{n,i}(3r^2+2r-1).
\end{align}
\end{subequations}
The existence and energy estimates below hold for any real root of
\eqref{eq:stage-scalar-k2}.  In the numerical implementation, Newton iteration
is initialized by the mean-reverting predictor
\begin{equation}\label{eq:root-predictor}
r_{n,i}^{(0)}:=\frac{R_{n,i}^r}{1+\gamma\tau_n a_{i,i}}.
\end{equation}
The iteration is safeguarded by a Newton--bisection hybrid whenever the
derivative becomes too small, the residual fails to decrease, or a Newton step
leaves the current bracket.  When $\beta_{n,i}>0$, the scalar polynomial has
odd degree and positive leading coefficient, so a bracket can always be found
by symmetrically expanding an interval centered at the predictor
\eqref{eq:root-predictor}.  When $\beta_{n,i}=0$,
the equation reduces to the linear equation described below.  Under the
sufficient small-step condition of
\cref{prop:stage-uniqueness-small-step}, the root is unique and hence
independent of the particular scalar solver.
Once a root $r_{n,i}$ has been obtained, the corresponding vorticity
$\omega_{n,i}$ is recovered directly from
\eqref{eq:stage-affine-vorticity}, after which
$B_{n,i}=B(\omega_{n,i})$ is evaluated for use in the subsequent stage.
Consequently, each stage requires two linear elliptic solves with the same operator and one scalar nonlinear solve.

\begin{algorithm}[H]
\caption{One-step SDIRK2-mr-ccSAV update}\label{alg:fixed-step}
\begin{algorithmic}[1]
\REQUIRE Accepted values $(\omega^n,r^n)$, step size $\tau_n>0$, and
forcing values $f_{n,0}$ and $f_{n,1}$.
\STATE Set $(\omega_{n,0},r_{n,0})=(\omega^n,r^n)$ and
$B_{n,0}=B(\omega^n)$.
\FOR{$i=1,2$}
  \STATE Form $\widetilde f_{n,i}$, $\widetilde B_{n,i}$,
  $R_{n,i}^\omega$, and $R_{n,i}^r$ from the already available stages.
  \STATE Solve the two shifted elliptic problems in
  \eqref{eq:stage-two-solves} for
  $\omega_{n,i}^{(1)}$ and $\omega_{n,i}^{(2)}$.
  \STATE Compute $\alpha_{n,i}$ and $\beta_{n,i}$ from
  \eqref{eq:alpha-beta}.
  \STATE Solve the scalar equation $\mathcal F_{n,i}(r)=0$ in
  \eqref{eq:stage-scalar-equation} by safeguarded Newton iteration initialized
  with the mean-reverting predictor \eqref{eq:root-predictor}.
  \STATE Set $r_{n,i}=r$ and recover $\omega_{n,i}$ from
  \eqref{eq:stage-affine-vorticity}.
  \IF{$i=1$}
    \STATE Evaluate $B_{n,1}=B(\omega_{n,1})$ for the second stage.
  \ENDIF
\ENDFOR
\STATE Accept $(\omega^{n+1},r^{n+1})=(\omega_{n,2},r_{n,2})$.
\end{algorithmic}
\end{algorithm}

For clarity, the complete computation on one time interval is summarized in
Algorithm~\ref{alg:fixed-step}.  Although the time grid may vary with $n$,
the term ``one-step'' emphasizes that only data from the current accepted
time level are needed.

\begin{proposition}[Stagewise existence and regularity]\label{prop:stage-solvability}
Assume that $\omega^n\in \dot{L}^s(\Omega)$ for some
$s>2$ and that $f_{n,j}\in V'$ for $j=0,1$.
Then, for
every $\tau_n>0$, each stage admits at least one solution
$\omega_{n,i}\in V$, $r_{n,i}\in\mathbb R, i=1,2$.

If, in addition, $\omega^n\in \dot H_{\rm per}^2(\Omega)$ and
$f_{n,j}\in H$ for $j=0,1$,
then $\omega_{n,i}\in \dot H_{\rm per}^2(\Omega)$ for $i=1,2$. If
$\omega^n\in \dot H_{\rm per}^3(\Omega)$ and $f_{n,j}\in V$ for
$j=0,1$,
then $\omega_{n,i}\in \dot H_{\rm per}^3(\Omega)$ for $i=1,2$. Consequently, the higher regularity
propagates from one accepted time level to the next.
\end{proposition}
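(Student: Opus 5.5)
Because $B_{n,i}$ is evaluated at an already computed stage, the plan is to argue stage by stage with the affine reduction \eqref{eq:stage-affine-vorticity}--\eqref{eq:stage-scalar-k2}. The first step is to make sure all explicit data are well defined. For $\omega^n\in\dot L^s$ with $s>2$, the streamfunction $\psi=(-\Delta)^{-1}\omega^n$ lies in $\dot W^{2,s}_{\rm per}$ by periodic Calder\'on--Zygmund (or Fourier multiplier) estimates, so $u=\nabla^\perp\psi\in W^{1,s}\subset L^\infty$ because $s>2$. Since $\nabla\cdot u=0$, we can write $B_{n,0}=\nabla\cdot(u\,\omega^n)$ with $u\omega^n\in L^s\subset L^2$. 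Hence $B_{n,0}\in V'$ and $|B_{n,0}|_{V'}\le C|\omega^n|_{L^s}^2$.

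For the later stages the data are better. The operator $I-\nu\tau_n a_{i,i}\Delta$ with $a_{i,i}=\eta>0$ is an isomorphism $V'\to V$ (Lax--Milgram), and more generally $\dot H^{m}\to\dot H^{m+2}$. Once $\omega_{n,1}\in V\subset L^p$ for all $p<\infty$, the same argument gives $B_{n,1}\in V'$. So at each stage $\widetilde B_{n,i}\in V'$ and $R^\omega_{n,i}\in V'$; note that $\omega_{n,0}$, $\Delta\omega_{n,1}$ and $\widetilde f_{n,i}$ all lie in $V'$. The two solves \eqref{eq:stage-two-solves} then give $\omega^{(1)}_{n,i},\omega^{(2)}_{n,i}\in V$, and the scalars $\alpha_{n,i},\beta_{n,i}$ in \eqref{eq:alpha-beta} are well-defined duality pairings.

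The next step is existence of a real root of the scalar equation. Testing the second solve with $\omega^{(2)}_{n,i}$ gives
\[
\beta_{n,i}=\langle(I-\nu\tau_n\eta\Delta)\omega^{(2)}_{n,i},\omega^{(2)}_{n,i}\rangle=|\omega^{(2)}_{n,i}|^2+\nu\tau_n\eta|\nabla\omega^{(2)}_{n,i}|^2\ge0 .
\]
There are two cases.
\begin{itemize}
\item If $\beta_{n,i}>0$, then $\mathcal F_{n,i}$ is a cubic with positive leading coefficient $\tau_n^2\beta_{n,i}$, so it has a real root by the intermediate value theorem.
\item If $\beta_{n,i}=0$, then $\omega^{(2)}_{n,i}=0$, hence $\widetilde B_{n,i}=0$ and $\alpha_{n,i}=0$. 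The equation becomes linear, $(1+\gamma\tau_n\eta)r=R^r_{n,i}$, with a unique root.
\end{itemize}
In either case we set $\omega_{n,i}=\omega_{n,i}(r_{n,i})\in V$, which solves \eqref{eqn:mr_sav_stage} by construction, since the stage system is equivalent to \eqref{eq:stage-vorticity-solve}--\eqref{eq:stage-r-equation}.

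The last step is regularity, by elliptic lifting. If $\omega^n\in\dot H^2$, Sobolev embedding gives $\psi\in H^4$, $u\in H^3\subset W^{1,\infty}$ and $\nabla\omega^n\in H^1\subset L^4$, so $B_{n,0}\in H^1\subset H$. The right-hand sides of \eqref{eq:stage-two-solves} are then in $H$; here $\Delta\omega_{n,1}\in H$ needs $\omega_{n,1}\in H^2$, which is available inductively from stage 1. The shifted Laplacian therefore gives $\omega^{(1)},\omega^{(2)}\in\dot H^2$, hence $\omega_{n,1}\in\dot H^2$, then $B_{n,1}\in H$, and the same argument gives $\omega_{n,2}\in\dot H^2$.

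The $\dot H^3$ case is identical one level up. Because $H^2$ is an algebra in 2D, $B(\omega)\in H^2\subset V$ when $\omega\in H^3$, and then the lift $V\to\dot H^3$ applies. Since $\omega^{n+1}=\omega_{n,2}$, the regularity propagates to the next accepted time level. The main obstacle is the low-regularity first step, $B_{n,0}\in V'$ from $\dot L^s$ data. This is exactly where $s>2$ is needed, to get $u\in L^\infty$, or at least $u\omega\in L^2$ via $u\in L^{2s/(s-2)}$.
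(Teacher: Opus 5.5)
Your proposal is correct and follows essentially the same route as the paper: the $\dot L^s$, $s>2$, bound placing $B(\omega^n)$ in $V'$ via $\bm u^n\in W^{1,s}\subset L^\infty$; the affine reduction to two shifted elliptic solves; the sign $\beta_{n,i}=\langle \widetilde B_{n,i},(I-\nu\tau_n a_{i,i}\Delta)^{-1}\widetilde B_{n,i}\rangle\ge 0$ with the cubic/linear case split; and elliptic lifting using $B:H^2\to L^2$ and $B:H^3\to H^1$. Your explicit identity $\beta_{n,i}=\|\omega^{(2)}_{n,i}\|^2+\nu\tau_n\eta\|\nabla\omega^{(2)}_{n,i}\|^2$, and your remark that stage 2 needs $\omega_{n,1}\in\dot H^2_{\rm per}$ (resp.\ $\dot H^3_{\rm per}$) to place $\Delta\omega_{n,1}$ in $H$ (resp.\ $V$), make explicit details that the paper leaves implicit.
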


\begin{proof}
For $s>2$, elliptic regularity and two-dimensional Sobolev embedding 
give 
$\bm u^n=\nabla^\perp(-\Delta)^{-1}\omega^n\in W^{1,s}(\Omega)\subset L^\infty(\Omega)$. Hence,
\[
\|B(\omega^n)\|_{V'}
\le \|\omega^n\|\,\|\bm u^n\|_{L^\infty}
\le C\|\omega^n\|\,\|\omega^n\|_{L^s}.
\]
Hence, $\widetilde B_{n,1}\in V'$ and
$R^\omega_{n,1}\in V'$. Since $a_{1,1}>0$, the operator
$I-\nu\tau_n a_{1,1}\Delta:V\to V'$ is invertible, so
$\omega_{n,1}^{(j)}\in V$ for $j=1,2$. This implies, once $r_{n,1}$
has been selected, that $\omega_{n,1}\in V$. In turn,
$\widetilde B_{n,2}\in V', R^\omega_{n,2}\in V'$. 
Hence the two elliptic problems
\eqref{eq:stage-two-solves} are well posed in $V$.

It remains to establish the solvability of the scalar equation.
Since $I-\nu\tau_n a_{i,i}\Delta: V\to V'$ is positive definite on the mean-zero space, its inverse is also positive definite, and
$(I-\nu\tau_n a_{i,i}\Delta)^{-1}g\in V$ if $g\in V'$. Consequently,
\[
\beta_{n,i} =
\left\langle
\widetilde B_{n,i},
\left(I-\nu\tau_n a_{i,i}\Delta\right)^{-1}\widetilde B_{n,i}
\right\rangle
\ge 0,
\]
where the brackets denote the $V'-V$ duality pairing. The quantity is
finite and equality holds if and only if $\widetilde B_{n,i}=0$.

Suppose first that $\beta_{n,i}>0$. Equation \eqref{eq:stage-scalar-k2} implies that $\mathcal F_{n,i}$ is a cubic polynomial with a nonzero positive
leading coefficient $\tau_n^2\beta_{n,i}$.
\ignore{
\[
\mathcal F_{n,i}(r)\to-\infty\quad\text{as }r\to-\infty,
\qquad
\mathcal F_{n,i}(r)\to+\infty\quad\text{as }r\to+\infty.
\]
}
The intermediate value theorem therefore guarantees at least one real root.

If $\beta_{n,i}=0$, then $\widetilde B_{n,i}=0$ and, consequently, $\alpha_{n,i}=0$ according to \eqref{eq:alpha-beta}. In this
case, \eqref{eq:stage-scalar-equation} reduces to
$
\left(1+\gamma\tau_n a_{i,i}\right)r=R_{n,i}^r,
$
which has a unique real solution because
$1+\gamma\tau_n a_{i,i}>0$.

Thus, in either case, the scalar equation admits at least one real root.  The
existence statement permits any such root; the safeguarded Newton procedure
described above specifies the root used in the computations.
Once a root $r_{n,i}$ has been obtained,
\eqref{eq:stage-affine-vorticity} uniquely determines $\omega_{n,i}\in V$. Hence the $i$th stage admits at least one solution in $V$.

For the higher-regularity assertions, the two-dimensional product estimates
give
\[
B:H^2(\Omega)\to L^2(\Omega),
\qquad B:H^3(\Omega)\to H^1(\Omega).
\]
Periodic elliptic regularity for
$I-\nu\tau_n a_{i,i}\Delta$ therefore yields the asserted $H^2$ and $H^3$
stage regularity. Stiff accuracy then propagates this regularity to the next
accepted time level.
\end{proof}

\begin{remark}
The assumption $\omega^0\in \dot L^s$, $s>2$, is used only to ensure
that the first explicit advection datum belongs to $V'$ and hence that
$\alpha_{n,i}$ and $\beta_{n,i}$ are finite.
 The additional
$L^s$ integrability does not enter the uniform-in-time $L^2$ energy bound,
whose dependence on the initial data is only through
$\|\omega^0\|_{L^2}$.
\end{remark}

\begin{proposition}[Small-step stagewise uniqueness]
\label{prop:stage-uniqueness-small-step}
Assume that there exist constants $M_\alpha,M_\beta\geq0$ such that the $\alpha_{n,i},\beta_{n,i}$ defined in \eqref{eq:alpha-beta} satisfy
\begin{equation}\label{eq:alpha-beta_bound}
|\alpha_{n,i}|\le M_\alpha,\qquad 0\le\beta_{n,i}\le M_\beta.
\end{equation}
Then the $i$th stage equation \eqref{eqn:mr_sav_stage} is uniquely solvable under the following small-step condition:
\begin{equation}\label{eq:time_step_restriction}
\tau_nM_\alpha+\frac43\tau_n^2M_\beta<1+\gamma\tau_n a_{i,i}.
\end{equation}
\end{proposition}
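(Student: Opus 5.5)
The plan is to reduce uniqueness of the coupled stage system \eqref{eqn:mr_sav_stage} to uniqueness of a real root of the scalar function $\mathcal F_{n,i}$ in \eqref{eq:stage-scalar-k2}, and then to show that $\mathcal F_{n,i}$ is strictly increasing on $\mathbb R$ under \eqref{eq:time_step_restriction}. Existence is already provided by \cref{prop:stage-solvability}, so only uniqueness needs proof.

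\emph{Step 1 (equivalence with the scalar equation).} Let $(\omega_{n,i},r_{n,i})\in V\times\mathbb R$ be any solution of the $i$th stage, with the previous stages fixed. Then $\omega_{n,i}$ satisfies \eqref{eq:stage-vorticity-solve}. Since $I-\nu\tau_n a_{i,i}\Delta:V\to V'$ is invertible, linearity gives $\omega_{n,i}=\omega_{n,i}(r_{n,i})$ as in \eqref{eq:stage-affine-vorticity}, with $\omega^{(1)}_{n,i},\omega^{(2)}_{n,i}$ the unique solutions of \eqref{eq:stage-two-solves}. These do not depend on $r_{n,i}$. Substituting into \eqref{eq:stage-r-equation} and using the definitions \eqref{eq:alpha-beta} shows that $\mathcal F_{n,i}(r_{n,i})=0$. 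Conversely, a root of $\mathcal F_{n,i}$ together with \eqref{eq:stage-affine-vorticity} yields a stage solution. Hence two distinct stage solutions would produce either two distinct roots of $\mathcal F_{n,i}$, or the same root with two different vorticities. The latter is impossible because \eqref{eq:stage-affine-vorticity} is single-valued. It therefore suffices to show that $\mathcal F_{n,i}$ has at most one real root.

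\emph{Step 2 (monotonicity).} By \eqref{eq:stage-scalar-k2},
\[
\mathcal F_{n,i}'(r)=1+\gamma\tau_n a_{i,i}+\tau_n\alpha_{n,i}+\tau_n^2\beta_{n,i}(3r^2+2r-1).
\]
The quadratic $3r^2+2r-1$ attains its minimum $-\tfrac43$ at $r=-\tfrac13$. Since $\beta_{n,i}\ge0$, this gives $\tau_n^2\beta_{n,i}(3r^2+2r-1)\ge-\tfrac43\tau_n^2\beta_{n,i}\ge-\tfrac43\tau_n^2M_\beta$. The sign of $\alpha_{n,i}$ is not controlled, so bound it by $\tau_n\alpha_{n,i}\ge-\tau_nM_\alpha$. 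Together these give, for all $r\in\mathbb R$,
\[
\mathcal F_{n,i}'(r)\ge 1+\gamma\tau_n a_{i,i}-\tau_nM_\alpha-\tfrac43\tau_n^2M_\beta>0
\]
by \eqref{eq:time_step_restriction}. Thus $\mathcal F_{n,i}$ is strictly increasing and has at most one real root.

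The degenerate case $\beta_{n,i}=0$ is covered by the same bound: then $\alpha_{n,i}=0$ and $\mathcal F_{n,i}$ is linear with positive slope. Combining Step 2 with the existence from \cref{prop:stage-solvability} gives exactly one root, and hence exactly one stage solution by Step 1.

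There is no serious obstacle. The only points needing care are the equivalence in Step 1, which relies on the invertibility of the shifted elliptic operator so that every stage solution has the affine form, and the need to use $|\alpha_{n,i}|$ because $\alpha_{n,i}$ may be negative. The factor $\tfrac43$ in \eqref{eq:time_step_restriction} is precisely the minimum value of $3r^2+2r-1$.
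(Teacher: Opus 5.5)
Your proposal is correct and matches the paper's proof: you use the lower bound $3r^2+2r-1\ge -\tfrac43$ together with $|\alpha_{n,i}|\le M_\alpha$ and $0\le\beta_{n,i}\le M_\beta$ to get $\mathcal F_{n,i}'>0$, so the scalar root is unique and \eqref{eq:stage-affine-vorticity} then fixes the vorticity. Your Step~1 only spells out the reduction from stage solutions to roots of $\mathcal F_{n,i}$, which the paper leaves implicit.
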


\begin{proof}
Equation \eqref{eq:stage-scalar-k2} gives
\[
\mathcal F_{n,i}'(r)
=
1+\gamma\tau_n a_{i,i}
+\tau_n\alpha_{n,i}
+\tau_n^2\beta_{n,i}(3r^2+2r-1).
\]
Since $3r^2+2r-1\ge -\frac43$, for  $r\in\mathbb R$, we obtain
\[
\begin{aligned}
\mathcal F_{n,i}'(r)
&\ge
1+\gamma\tau_n a_{i,i}-\tau_n|\alpha_{n,i}|-\frac43\tau_n^2\beta_{n,i} 
\ge
1+\gamma\tau_n a_{i,i}-\tau_nM_\alpha-\frac43\tau_n^2M_\beta.
\end{aligned}
\]
Hence $\mathcal F_{n,i}'(r)>0$ provided that \eqref{eq:time_step_restriction} is satisfied.
Thus the scalar equation has a unique root, and
\eqref{eq:stage-affine-vorticity} then uniquely determines the vorticity
stage.
\end{proof}

\subsection{Embedded adaptive time-stepping algorithm}
In this subsection, we propose an embedded adaptive time-stepping strategy based on the SDIRK2-mr-ccSAV scheme. 
The strategy uses previously accepted numerical solutions and the first SDIRK stage to construct an embedded error indicator. If the indicator exceeds the prescribed tolerance, the current time step is rejected, reduced, and recomputed.

Since a local Taylor expansion gives
$\omega_{n,1}=\omega(t_n+c_1\tau_n)+O(\tau_n^2)$,
linear Lagrange extrapolation from $t_n$ and $t_n+c_1\tau_n$ yields the
embedded pair
\begin{equation}\label{eq:adaptive-extrapolation}
\omega_{(1)}^{n+1}
:=
\frac{c_1-1}{c_1}\omega^n
+\frac{1}{c_1}\omega_{n,1},
\qquad
\omega_{(2)}^{n+1}
:=
\omega_{n,2}
=
\omega^{n+1}.
\end{equation}
Here, $\omega_{(1)}^{n+1}$ and $\omega_{(2)}^{n+1}$ denote the
first- and second-order approximations at $t_{n+1}$, respectively.
Then, the embedded error indicator is defined by
\begin{equation}\label{eq:adaptive-embedded-error}
    \begin{aligned}
    e_\omega^{n+1} =
    \frac{\|\omega_{(2)}^{n+1}-\omega_{(1)}^{n+1}\|}
    {\max\{\|\omega_{(2)}^{n+1}\|,\varepsilon_{\rm ref}\}},\qquad
    e_r^{n+1}=|r^{n+1}|.
    \end{aligned}
\end{equation}
Here $\varepsilon_{\rm ref}>0$ is a fixed normalization floor that prevents
division by zero when the vorticity norm is very small.



Following \cite{hairer1993solving}, the adaptive strategy is summarized in
\cref{alg:adaptive-extrapolation}. The time-step update function is defined by
\begin{equation}\label{eq:adaptive-step-update}
A_{\mathrm{ext}}(e_\omega,e_r,\tau)
=
\rho
\min\left\{
\left(\frac{\mathrm{tol}_\omega}{e_\omega}\right)^{1/2},
\frac{\mathrm{tol}_r}{e_r}
\right\}\tau,
\end{equation}
where $\rho\in(0,1)$ is a safety factor, and
$\mathrm{tol}_\omega$ and $\mathrm{tol}_r$ are prescribed tolerances for
$\omega$ and $r$, respectively. If an error indicator vanishes, the
corresponding quotient in \eqref{eq:adaptive-step-update} is interpreted as
$+\infty$.

Since $\omega_{(2)}^{n+1}$ and $\omega_{(1)}^{n+1}$ are second- and
first-order approximations, respectively, their difference is of order
$O(\tau_n^2)$ under sufficient temporal regularity. This motivates the
exponent $1/2$ in \eqref{eq:adaptive-step-update}. Because the SDIRK2
scheme is a one-step method, the adaptive controller can be activated
from the first time step using a prescribed initial trial step.

\begin{algorithm}[ht]
\footnotesize
\caption{Extrapolation-based adaptive SDIRK2-mr-ccSAV method}
\label{alg:adaptive-extrapolation}
\begin{algorithmic}
\STATE \textbf{Given:}
$\omega^n$, $r^n$, and a trial step $\tau_n$.
\STATE \textbf{Step 1.}
Compute $(\omega^{n+1},r^{n+1}, \omega_{n,1})$ from $(\omega^n,r^n)$ by the
SDIRK2-mr-ccSAV scheme with time step $\tau_n$.
\STATE \textbf{Step 2.}
Compute $\omega_{\mathrm{(1)}}^{n+1}$ from
\eqref{eq:adaptive-extrapolation}.
\STATE \textbf{Step 3.}
Compute $e_\omega^{n+1}$ and $e_r^{n+1}$.
\IF{$e_\omega^{n+1}\le \mathrm{tol}_\omega$ \textbf{and}
$e_r^{n+1}\le \mathrm{tol}_r$}
\STATE \textbf{Step 4.}
Accept $(\omega^{n+1},r^{n+1})$ and update
\[
\tau_{n+1}
\leftarrow
\max\left\{\tau_{\min},
\min\left\{A_{\mathrm{ext}}(e_\omega^{n+1},e_r^{n+1},\tau_n),
\tau_{\max}\right\}\right\}.
\]
\ELSE
\IF{$\tau_n=\tau_{\min}$}
\STATE Stop and report that the prescribed tolerances cannot be met at
$\tau_{\min}$.
\ELSE
\STATE \textbf{Step 5.}
Reject $(\omega^{n+1},r^{n+1})$ and reset
\[
\tau_n
\leftarrow
\max\left\{\tau_{\min},
\min\left\{A_{\mathrm{ext}}(e_\omega^{n+1},e_r^{n+1},\tau_n),
\tau_{\max}\right\}\right\}.
\]
\STATE \textbf{Step 6.} Return to \textbf{Step 1}.
\ENDIF
\ENDIF
\end{algorithmic}
\end{algorithm}

\section{Uniform-in-Time Stability}

We first collect the algebraic and analytic tools used below. The uniform-in-time estimates for the variable-step SDIRK2-mr-ccSAV approximation are proved in the following subsections.
Since the considered vorticity variable has zero mean, the Poincar\'e
inequality for the mean-zero periodic functions yields
\begin{equation}\label{eqn:poincare}
\|h\|^2
\le
C_P\|\nabla h\|^2,
\qquad \forall h\in V,
\end{equation}
where the constant $C_P$ depends only on the domain $\Omega$. 
The incremental Runge-Kutta coefficent matrices $A, \hat{A}$ are introduced earlier in \eqref{eq:incremental-rk-matrices}.
\ignore{
We next introduce the incremental Runge--Kutta coefficient matrices 
\begin{equation*}
\begin{aligned}
A
:=
\begin{pmatrix}
a_{1,1} & 0 \\
a_{2,1} & a_{2,2}
\end{pmatrix}
=
\begin{pmatrix}
\eta & 0 \\
1-2\eta & \eta
\end{pmatrix},\\[3pt]
\hat A
:=
\begin{pmatrix}
\hat a_{1,0} & 0 \\
\hat a_{2,0} & \hat a_{2,1}
\end{pmatrix}
=
\begin{pmatrix}
\eta & 0 \\
\delta-\eta & 1-\delta
\end{pmatrix}.
\end{aligned}
\end{equation*}
}

\begin{lemma}[Stage coefficient bounds {\cite[Lemma~2.2]{liao2026longtime}}]
    \label{lem:stage-coefficient-bounds}
Let $\lambda_I$ and $\sigma_E$ be the constants defined by
\[
\lambda_I
=\lambda_{\min}\!\left(\frac{A+A^T}{2}\right)
\approx 0.0858,\qquad
\sigma_E
=\|\hat A\|_2
\approx 1.9841.
\]
Then, for any space-time sequences
$\{v_i,u_i\}_{i\ge1}$, the following estimates hold:
\begin{equation}\label{eq:implicit-stage-coercivity}
\sum_{i=1}^{k}\sum_{j=1}^{i}
a_{i,j}\langle u_j,u_i\rangle \ge \lambda_I\sum_{i=1}^{k}\|u_i\|^2,
\end{equation}
and
\begin{equation}\label{eq:explicit-stage-bound}
\left|
\sum_{i=1}^{k}\sum_{j=0}^{i-1}
\hat a_{i,j}\langle v_j,u_i\rangle
\right|
\le
\sigma_E
\left(\sum_{j=0}^{k-1}\|v_j\|^2\right)^{1/2}
\left(\sum_{i=1}^{k}\|u_i\|^2\right)^{1/2}.
\end{equation}
\end{lemma}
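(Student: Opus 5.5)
The plan is to reduce both estimates to finite-dimensional quadratic and bilinear forms in the stage index, evaluated pointwise in the spatial variable, and then integrate. I read the statement for $k\in\{1,2\}$, the number of nontrivial stages, with $A=(a_{i,j})_{1\le i,j\le 2}$ and $\hat A=(\hat a_{i,j})_{1\le i\le 2,\,0\le j\le 1}$ as in \eqref{eq:incremental-rk-matrices}. For $u_i\in H$, set $\bm U(x):=(u_1(x),\dots,u_k(x))^T$ and $\bm V(x):=(v_0(x),\dots,v_{k-1}(x))^T$. By Fubini,
\[
\sum_{i=1}^{k}\sum_{j=1}^{i}a_{i,j}\langle u_j,u_i\rangle=\int_\Omega \bm U(x)^TA_k\bm U(x)\,dx,
\qquad
\sum_{i=1}^{k}\sum_{j=0}^{i-1}\hat a_{i,j}\langle v_j,u_i\rangle=\int_\Omega \bm U(x)^T\hat A_k\bm V(x)\,dx,
\]
where $A_k$ and $\hat A_k$ are the leading $k\times k$ blocks. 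These blocks reproduce the sums exactly because both matrices are lower triangular. If the pairings are instead to be understood in a general Hilbert space, for example $V$, the same identity holds after expanding in an orthonormal basis and summing coefficientwise. The argument is otherwise unchanged.

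For \eqref{eq:implicit-stage-coercivity}, only the symmetric part survives in the quadratic form: $\bm U^TA_k\bm U=\bm U^T\tfrac12(A_k+A_k^T)\bm U\ge\lambda_{\min}(\tfrac12(A_k+A_k^T))|\bm U|^2$. I would compute this explicitly. The symmetric part is
\[
\frac{A+A^T}{2}=\begin{pmatrix}\eta&\tfrac{1-2\eta}{2}\\ \tfrac{1-2\eta}{2}&\eta\end{pmatrix},
\]
with eigenvalues $\eta\pm\tfrac{1-2\eta}{2}$. The smaller one is $\lambda_I=2\eta-\tfrac12=\tfrac32-\sqrt2\approx0.0858>0$. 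For $k=1$ the block is the scalar $\eta\approx0.293\ge\lambda_I$, which also follows from Cauchy interlacing for principal submatrices. Integrating the pointwise bound over $\Omega$ gives the claim.

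For \eqref{eq:explicit-stage-bound}, apply the pointwise bound $|\bm U^T\hat A_k\bm V|\le\|\hat A_k\|_2|\bm U||\bm V|$. A submatrix has spectral norm at most that of the full matrix, so $\|\hat A_k\|_2\le\|\hat A\|_2=\sigma_E$. Integrating and using Cauchy--Schwarz in $L^2(\Omega)$ gives $\int|\bm U||\bm V|\le(\sum\|u_i\|^2)^{1/2}(\sum\|v_j\|^2)^{1/2}$. The remaining task is to confirm the numerical value of $\sigma_E$: it is the square root of the largest eigenvalue of $\hat A^T\hat A$, computed from $\eta$ and $\delta=1-1/(2\eta)$.

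I expect no serious obstacle. The only points needing care are the bookkeeping of the shifted column index $j=0,\dots,k-1$ in $\hat A$ and the justification of the pointwise or basis reduction when the pairings are duality pairings rather than $L^2$ inner products.
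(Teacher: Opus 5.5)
Your proof is correct and complete. The paper gives no proof of its own; it only cites \cite[Lemma~2.2]{liao2026longtime}. Your reduction of the two sums to the quadratic form of the symmetric part of $A$ and the bilinear form of $\hat A$ is the standard argument for such a lemma, with $\lambda_I=2\eta-\tfrac12=\tfrac32-\sqrt2$. You can apply it pointwise, componentwise for vector-valued $u_i$ such as $\nabla\omega_{n,i}$, or via the Riesz map for the $V'$--$V$ pairings used in the convergence proof.

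The one item you left open also closes. Since $\delta-\eta=-1$ and $\eta(1-\delta)=\tfrac12$, the matrix $\hat A^T\hat A$ has trace $4$ and determinant $\tfrac14$. Hence $\sigma_E^2=2+\tfrac{\sqrt{15}}{2}$ and $\sigma_E\approx1.9841$, as stated.
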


We next present two discrete versions of Gronwall's inequality.

\begin{lemma}[Damped discrete Gronwall inequality]\label{lem:damped-discrete-gronwall}
Let $\alpha>0$, $C\ge0$, and let $\{\tau_n\}_{n\ge0}$ be a given positive time-step sequence.
Assume that the nonnegative stage values $\{v_{n,k}\}$ satisfy
\[
v_{n,0}=v^n,\qquad v^{n+1}=v_{n,s},
\]
where $s\ge1$ is a fixed integer. Suppose that for every $n\ge0$ and $1\le k\le s$,
\begin{equation}\label{eq:damped-gronwall-stage}
(1+\alpha\tau_n)v_{n,k}\le v_{n,0}+C\tau_n.
\end{equation}
Then,
\begin{equation}\label{eq:damped-gronwall-bound}
v_{n,k}
\le
\frac{1}{1 + \alpha t_{n+1}} v^0+ \frac{C}{\alpha},
\qquad n\ge0,\quad 1\le k\le s.
\end{equation}
\end{lemma}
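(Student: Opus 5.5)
The plan is an induction on $n$, using the accepted values $v^n$ as the induction variable. Throughout, $t_0=0$ and $t_{n+1}=t_n+\tau_n$. The hypothesis to carry forward is
\[
v^n\le \frac{1}{1+\alpha t_n}v^0+\frac{C}{\alpha}.
\]
At $n=0$ this holds trivially, since $t_0=0$ and $C/\alpha\ge0$.

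For the inductive step, fix $n$ with the hypothesis in hand and any $1\le k\le s$. Since $v_{n,0}=v^n$ and $1+\alpha\tau_n>0$, dividing the stage inequality \eqref{eq:damped-gronwall-stage} by $1+\alpha\tau_n$ and inserting the hypothesis gives
\[
v_{n,k}\le \frac{v^0}{(1+\alpha t_n)(1+\alpha\tau_n)}+\frac{C/\alpha+C\tau_n}{1+\alpha\tau_n}.
\]
The second term equals exactly $C/\alpha$, because $C/\alpha+C\tau_n=\frac{C}{\alpha}(1+\alpha\tau_n)$.

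For the first term, expanding the product gives
\[
(1+\alpha t_n)(1+\alpha\tau_n)=1+\alpha t_{n+1}+\alpha^2t_n\tau_n\ge 1+\alpha t_{n+1}.
\]
Together with $v^0\ge0$, this yields $v_{n,k}\le v^0/(1+\alpha t_{n+1})+C/\alpha$ for every $1\le k\le s$, which is \eqref{eq:damped-gronwall-bound} at level $n$. Taking $k=s$ and using $v^{n+1}=v_{n,s}$ gives the induction hypothesis at level $n+1$, which closes the induction.

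No step is a genuine obstacle. The only points to check are that nonnegativity of $v^0$ is used to replace the product $(1+\alpha t_n)(1+\alpha\tau_n)$ by the smaller quantity $1+\alpha t_{n+1}$, and that the forcing contribution $C/\alpha$ reproduces itself exactly rather than accumulating.
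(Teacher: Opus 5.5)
Your proof is correct and is essentially the paper's argument. The paper unrolls the endpoint recursion into the closed form with $\prod_{\ell=0}^{n}(1+\alpha\tau_\ell)^{-1}$ and only then uses $\prod_{\ell=0}^{n}(1+\alpha\tau_\ell)\ge 1+\alpha t_{n+1}$. You instead apply the same weakening, $(1+\alpha t_n)(1+\alpha\tau_n)\ge 1+\alpha t_{n+1}$, one step at a time inside the induction, using that $C/\alpha$ is a fixed point of the affine update; the paper's route simply passes through the sharper product bound before discarding it.
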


\begin{proof}
Dividing \eqref{eq:damped-gronwall-stage} by
$1+\alpha\tau_n$, we obtain
\begin{equation}\label{eq:damped-gronwall-stage-rewrite}
v_{n,k}
\le
\frac{1}{1+\alpha\tau_n}v^n
+
\frac{C}{\alpha}
\left(1-\frac{1}{1+\alpha\tau_n}\right).
\end{equation}
Taking $k=s$ and iterating the resulting endpoint recursion, followed
by one further application of the stage estimate, yields
\begin{equation}\label{eq:damped-gronwall-endpoint}
v_{n,k}
\le
\left(\prod_{\ell=0}^{n}
(1+\alpha\tau_\ell)^{-1}\right)v^0
+
\frac{C}{\alpha}
\left[
1-
\prod_{\ell=0}^{n}
(1+\alpha\tau_\ell)^{-1}
\right],
\qquad n\ge0,
\end{equation}
where the empty product is understood as one. 

Moreover, since $\alpha,\tau_\ell>0$ and
$\sum_{\ell=0}^{n}\tau_\ell=t_{n+1}$,
\[
\prod_{\ell=0}^{n}(1+\alpha\tau_\ell)
\ge
1+\alpha\sum_{\ell=0}^{n}\tau_\ell
=
1+\alpha t_{n+1}.
\]
Using $v^0\ge0$, we therefore obtain
\[
v_{n,k}
\le
\frac{1}{1+\alpha t_{n+1}}v^0+\frac{C}{\alpha},
\]
which proves \eqref{eq:damped-gronwall-bound}.
\end{proof}

\begin{lemma}[Sliding-window damped discrete Gronwall inequality]
\label{lem:discrete-gronwall}
Let $\alpha>0$, and let $\{\tau_n\}_{n\ge0}$ denote a sequence of positive
time steps such that $\tau_*:=\sup_{n\ge0}\tau_n$ is bounded.
Set $t_0=0$ and for each $n\ge1$ define $t_n:=\sum_{\ell=0}^{n-1}\tau_\ell$.

Let $\{v^n\}_{n\ge0}$ and $\{g_n\}_{n\ge0}$ be nonnegative
sequences satisfying
\begin{equation}\label{eq:uniform-gronwall-step}
(1+\alpha\tau_n)v^{n+1}
\le v^n+\tau_ng_n,
\qquad n\ge0.
\end{equation}
Assume that, for every $T>0$, there exists a constant $C_T$ such
that
\[
\sup_{m\ge0}
\sup_{\substack{N>m\\ t_N-t_m\le T}}
\sum_{p=m}^{N-1}\tau_p g_p
\le C_T.
\]
Then, for every fixed $T_*>\tau_*$ and every $n\ge0$,
\begin{equation}\label{eq:uniform-gronwall-bound}
v^n
\le
\frac{v^0}{1+\alpha t_n}
+
\left(
1+\frac{1}{\alpha(T_*-\tau_*)}
\right)C_{T_*}.
\end{equation}
\end{lemma}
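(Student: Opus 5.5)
The plan is to unroll the one-step recursion \eqref{eq:uniform-gronwall-step} and group the steps into windows of physical length comparable to $T_*$. On each window we use the hypothesis on $\sum\tau_pg_p$, and the damping factor $\prod(1+\alpha\tau_\ell)^{-1}$ summed over windows gives a geometric-type series.

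First, dividing \eqref{eq:uniform-gronwall-step} by $1+\alpha\tau_n$ and iterating from $0$ to $n-1$ gives
\[
v^n\le \Pi_{0,n}v^0+\sum_{p=0}^{n-1}\Pi_{p,n}\,\tau_pg_p,\qquad
\Pi_{p,n}:=\prod_{\ell=p}^{n-1}(1+\alpha\tau_\ell)^{-1}.
\]
As in the proof of \cref{lem:damped-discrete-gronwall}, $\prod_{\ell=p}^{n-1}(1+\alpha\tau_\ell)\ge 1+\alpha(t_n-t_p)$. This gives the first term $v^0/(1+\alpha t_n)$ directly.

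For the forcing sum we partition backwards from $t_n$. Define indices $n=m_0>m_1>\cdots$ greedily: $m_{q+1}$ is the smallest index $m$ with $t_{m_q}-t_m\le T_*$. Because every step is at most $\tau_*<T_*$, each window is nonempty. Maximality gives $t_{m_q}-t_{m_{q+1}}>T_*-\tau_*$ whenever $m_{q+1}>0$, since adding one more step would exceed $T_*$. Hence $t_n-t_{m_q}\ge q(T_*-\tau_*)$. For $p$ in window $q$ (that is, $m_{q+1}\le p<m_q$) we have $\Pi_{p,n}\le \Pi_{m_q,n}\le (1+\alpha q(T_*-\tau_*))^{-1}$. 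The window sum $\sum_{p=m_{q+1}}^{m_q-1}\tau_pg_p$ is at most $C_{T_*}$ by hypothesis. So the forcing term is bounded by $C_{T_*}\sum_{q\ge0}\Pi_{m_q,n}$.

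The main obstacle is making this sum converge, because the harmonic-type bound $1/(1+\alpha qL)$ with $L=T_*-\tau_*$ is not summable. Instead we use the multiplicative structure. Each window contributes a factor $\prod(1+\alpha\tau_\ell)^{-1}\le (1+\alpha L)^{-1}$, so $\Pi_{m_q,n}\le (1+\alpha L)^{-q}$. The geometric series then gives
\[
\sum_{q\ge0}(1+\alpha L)^{-q}=\frac{1+\alpha L}{\alpha L}=1+\frac{1}{\alpha(T_*-\tau_*)},
\]
which is exactly the constant in \eqref{eq:uniform-gronwall-bound}. The last, possibly shorter, window (the one reaching $m_{q+1}=0$) still has length at most $T_*$, so its sum is bounded by $C_{T_*}$ and it is covered by the same term of the series. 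Combining the two contributions completes the proof. The only care needed is at the window boundaries: the estimate $\Pi_{p,n}\le\Pi_{m_q,n}$ uses $p<m_q$ together with the fact that all factors are at most one.
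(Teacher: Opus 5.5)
Your proposal is correct and follows essentially the same route as the paper's proof. Both unroll the recursion and partition the indices backward into maximal windows of length at most $T_*$, each complete window having length exceeding $T_*-\tau_*$. Both then bound each window's forcing sum by $C_{T_*}$ and sum the geometric series $\sum_{r\ge0}(1+\alpha(T_*-\tau_*))^{-r}$ coming from the per-window multiplicative damping. Your remarks on why each window is nonempty and why every window preceding a nonempty one is complete spell out what the paper compresses into ``except possibly for the last block.''
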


\begin{proof}
Iterating \eqref{eq:uniform-gronwall-step} gives
\begin{equation}\label{eq:uniform-gronwall-iteration}
v^n
\le
\left(
\prod_{\ell=0}^{n-1}(1+\alpha\tau_\ell)^{-1}
\right)v^0
+
\sum_{p=0}^{n-1}
\left(
\prod_{\ell=p}^{n-1}(1+\alpha\tau_\ell)^{-1}
\right)\tau_p g_p.
\end{equation}
Since $\prod_{\ell=0}^{n-1}(1+\alpha\tau_\ell) \ge 1+\alpha t_n$, the first term on the right-hand side of \eqref{eq:uniform-gronwall-iteration} is bounded by $v^0/(1+\alpha t_n)$.

It remains to estimate the accumulated source term. 
Starting from $N_0=n$, partition the indices backward into consecutive
blocks
\[
B_r:=\{N_{r+1},\ldots,N_r-1\},
\]
where $N_{r+1}$ is chosen as the smallest nonnegative integer satisfying
$t_{N_r}-t_{N_{r+1}}\le T_*$.
Except possibly for the last block, the maximality of each block and
$\tau_n\le\tau_*$ imply
\[
T_* - \tau_* <t_{N_r}-t_{N_{r+1}}\le T_*.
\]
Consequently, for every complete block $B_r$, we have
\[
\prod_{\ell=N_{r+1}}^{N_r-1}
(1+\alpha\tau_\ell)^{-1}
\le
\frac{1}{1+\alpha(t_{N_r}-t_{N_{r+1}})}
\le 
\frac{1}{1+\alpha (T_*- \tau_*)}.
\]

If $p\in B_r$, the product from $p$ to $n-1$ contains the $r$
complete blocks $B_0,\ldots,B_{r-1}$, and hence
\[
\prod_{\ell=p}^{n-1}(1+\alpha\tau_\ell)^{-1}
\le \frac{1}{(1+\alpha (T_*- \tau_*))^r}.
\]
Moreover, each block has temporal length at most $T_*$, so the
sliding-window assumption gives $\sum_{p\in B_r}\tau_p g_p\le C_{T_*}$. Therefore,
\begin{equation*}
\begin{aligned}
\sum_{p=0}^{n-1}
\left(
\prod_{\ell=p}^{n-1}(1+\alpha\tau_\ell)^{-1}
\right)\tau_p g_p 
&\leq
\sum_{r=0}^{\infty} \sum_{p\in B_r}
\left(
\prod_{\ell=p}^{n-1}(1+\alpha\tau_\ell)^{-1}
\right)\tau_p g_p \\
&\le
\sum_{r=0}^{\infty}\frac{1}{(1+\alpha (T_*- \tau_*))^r}\sum_{p\in B_r}\tau_p g_p \\
&=
\left( 1+\frac{1}{\alpha(T_*-\tau_*)} \right)C_{T_*}.
\end{aligned}
\end{equation*}
Combining this estimate with
\eqref{eq:uniform-gronwall-iteration} proves
\eqref{eq:uniform-gronwall-bound}.
\end{proof}

\subsection{Uniform-in-time enstrophy (\texorpdfstring{$L^2$}{L2} norm) estimate}

We first present a uniform-in-time bound on the enstrophy
\begin{equation}\label{eqn:enstrophy_def}
    \mathcal{E}(t) := \frac{1}{2}\|\omega(\cdot,t)\|^2.
\end{equation}
Uniform-in-time enstrophy bound is the same as an $L^\infty(0,\infty; L^2)$ estimate on the vorticity.
\begin{theorem}\label{lem:l2-uniform-bound}
Assume $\gamma>0$ and that $\omega^0\in \dot L^s(\Omega)$ for some $s>2$,
$r^0\in\mathbb R$, and $f\in L^\infty(0,\infty;L^2(\Omega))$. Assume also that the stage values are
chosen so that
\[
\|f_{n,j}\|^2\le C_f:=\|f\|^2_{L^\infty(0,\infty;L^2(\Omega))},
\qquad n\ge0,\quad 0\le j\le 1.
\]
Then the SDIRK2-mr-ccSAV solution satisfies
\begin{equation}\label{eq:L2-stage-bound}
\|\omega_{n,k}\|^2+|1-r_{n,k}|^2 \le \frac{1}{1 + \alpha t_{n+1}} (\|\omega^0\|^2+|1-r^0|^2) + \frac{C_1}{\alpha},
\end{equation}
for every $n\ge0$ and $1\le k\le2$, where
\begin{equation*}
\alpha:=\min\left\{{\nu\lambda_I}/{C_P},\gamma\lambda_I\right\}, \qquad
C_1:=\frac{2\sigma_E^2C_PC_f}{\nu\lambda_I}+\frac{\gamma}{\lambda_I}.
\end{equation*}
\end{theorem}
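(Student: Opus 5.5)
The plan is to derive a stagewise discrete energy inequality in which the explicitly treated advection cancels exactly. Its form will be
\[
(1+\alpha\tau_n)\bigl(\|\omega_{n,k}\|^2+|1-r_{n,k}|^2\bigr)\le \|\omega^n\|^2+|1-r^n|^2+C_1\tau_n,\qquad k=1,2,
\]
and the conclusion \eqref{eq:L2-stage-bound} then follows from Lemma~\ref{lem:damped-discrete-gronwall} with $v_{n,k}=\|\omega_{n,k}\|^2+|1-r_{n,k}|^2$ and $s=2$. Existence of the stage values, with $\omega_{n,i}\in V$, is given by Proposition~\ref{prop:stage-solvability}. This is the only place where the hypothesis $\omega^0\in\dot L^s$ enters.

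\textbf{Testing and cancellation.} For each $i$, take the $V'$--$V$ pairing of \eqref{eqn:isdirk2-mr-ccsav-omega} with $2\omega_{n,i}$. Multiply \eqref{eqn:isdirk2-mr-ccsav-r} by $2(r_{n,i}-1)$ and add the two identities. Since $G_\omega(r)=(1-r)G_r(r)$, the advection contributions combine to
\[
2\tau_n\langle \widetilde B_{n,i},\omega_{n,i}\rangle\,\bigl[-(1-r_{n,i})G_r(r_{n,i})-(r_{n,i}-1)G_r(r_{n,i})\bigr]=0,
\]
so they drop out for any root $r_{n,i}$. Set $u_i:=r_{n,i}-1$. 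The increments are handled with $2(a-b)a=a^2-b^2+(a-b)^2$, applied to $a=\omega_{n,i}$, $b=\omega_{n,i-1}$ and to $a=1-r_{n,i}$, $b=1-r_{n,i-1}$. Summing over $i=1,\dots,k$ telescopes the left-hand side to $v_{n,k}-v_{n,0}$ plus nonnegative squares, which we discard.

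\textbf{Dissipative and source terms.} The viscous part is $2\nu\tau_n\sum_{i\le k}\sum_{j\le i}a_{i,j}\langle\nabla\omega_{n,j},\nabla\omega_{n,i}\rangle$. By \eqref{eq:implicit-stage-coercivity} it is at least $2\nu\tau_n\lambda_I\sum_i\|\nabla\omega_{n,i}\|^2$. The mean-reversion part is $2\gamma\tau_n\sum_{i,j}a_{i,j}(u_j+1)u_i$. Its quadratic part is at least $2\gamma\tau_n\lambda_I\sum_i u_i^2$ by the same coercivity. The leftover linear piece $2\gamma\tau_n\sum_i(\sum_j a_{i,j})u_i$ is absorbed by Young's inequality into half of the quadratic term. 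Its cost is $\gamma\tau_n\lambda_I^{-1}\sum_i(\sum_j a_{i,j})^2$, and since the row sums of $A$ are $\eta$ and $1-\eta$, this sum is at most $1$, giving the constant $\gamma/\lambda_I$. The forcing term is bounded by \eqref{eq:explicit-stage-bound} by $2\tau_n\sigma_E\sqrt{2C_f}\,(\sum_i\|\omega_{n,i}\|^2)^{1/2}$. Using the Poincar\'e inequality \eqref{eqn:poincare} and Young's inequality, we absorb it into half of the viscous term, at a cost of order $\sigma_E^2C_PC_f\tau_n/(\nu\lambda_I)$. The remaining halves of the viscous and mean-reversion terms then dominate $\alpha\tau_n\sum_{i\le k}v_{n,i}$, again via Poincar\'e.

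\textbf{Expected obstacle.} The delicate point is that the coercivity estimate controls a sum over all stages $i\le k$, whereas Lemma~\ref{lem:damped-discrete-gronwall} needs the damping attached to stage $k$ alone. For $k=1$ this is immediate. For $k=2$ we drop the nonnegative $i=1$ contribution, keeping only $\alpha\tau_n v_{n,2}$. We must also check that \eqref{eq:implicit-stage-coercivity} applies to the truncated leading block of $A$ for $k=1$; it does, since $a_{1,1}=\eta\ge\lambda_I$. Finally, the Young's-inequality constants must be tracked carefully so that they match the stated $\alpha$ and $C_1$, with the factor $2$ from the testing balanced against the halving used in the absorptions.
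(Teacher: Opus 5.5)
Your proposal is correct and follows the paper's proof essentially step for step. You test with $2\omega_{n,i}$ and $2(r_{n,i}-1)$ so that $G_\omega=(1-r)G_r$ cancels the explicit advection, use the coercivity of $A$ for both dissipative terms, and apply Young's inequality with the row-sum bound to the linear mean-reversion term and with Poincar\'e to the forcing. You then drop the $i<k$ contributions to obtain $(1+\alpha\tau_n)E_{n,k}\le E_{n,0}+C_1\tau_n$ and finish with Lemma~\ref{lem:damped-discrete-gronwall}; your constants reproduce $\alpha$ and $C_1$ exactly.

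One point to state explicitly: Proposition~\ref{prop:stage-solvability} applies at every step, not just the first, because $\omega^n=\omega_{n-1,2}\in V\subset\dot L^s$. The paper handles solvability inductively in this way.
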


\begin{proof}
By Proposition~\ref{prop:stage-solvability}, the initial $L^s$ regularity
makes the first step well defined. Each implicit stage belongs to $H^1\subset L^s$, so
the same solvability argument applies inductively at every subsequent step.

Step 1. [Stagewise energy identity]
Let $E_{n,i}:=\|\omega_{n,i}\|^2+|1-r_{n,i}|^2$.
Taking the $L^2$ inner product of the vorticity equation
\eqref{eqn:isdirk2-mr-ccsav-omega} with
$2\omega_{n,i}$ gives
\begin{equation}\label{eq:l2-omega-stage}
\begin{aligned}
&\|\omega_{n,i}\|^2-\|\omega_{n,i-1}\|^2
+\|\omega_{n,i} - \omega_{n,i-1}\|^2
+2\nu\tau_n\sum_{j=1}^{i}
a_{i,j}\langle\nabla\omega_{n,j},\nabla\omega_{n,i}\rangle\\
&=
2\tau_n\sum_{j=0}^{i-1}
\hat a_{i,j}\langle f_{n,j},\omega_{n,i}\rangle
-2\tau_nG_\omega(r_{n,i})
\sum_{j=0}^{i-1}\hat a_{i,j}
\langle B_{n,j},\omega_{n,i}\rangle.
\end{aligned}
\end{equation}
Similarly, multiplying the scalar equation \eqref{eqn:isdirk2-mr-ccsav-r} by $2(r_{n,i}-1)$ 
yields
\begin{equation}\label{eq:l2-r-stage}
\begin{aligned}
&|1-r_{n,i}|^2-|1-r_{n,i-1}|^2 +|r_{n,i} - r_{n,i-1}|^2 +2\gamma\tau_n\sum_{j=1}^{i} a_{i,j}(r_{n,i}-1)(r_{n,j} - 1)\\
&= 2\tau_nG_\omega(r_{n,i}) \sum_{j=0}^{i-1}
\hat a_{i,j}\langle B_{n,j},\omega_{n,i}\rangle +2\gamma\tau_n\sum_{j=1}^{i} a_{i,j}(1-r_{n,i}),
\end{aligned}
\end{equation}
where the identity $G_\omega=(1-r)G_r$ has been used. Adding
\eqref{eq:l2-omega-stage} and \eqref{eq:l2-r-stage} cancels the nonlinear
term. Summing the result over $i=1,\ldots,k$, with $k=1$ or $2$, gives
\begin{equation}\label{eq:l2-stage-sum}
\begin{aligned}
&E_{n,k}-E_{n,0}
+\sum_{i=1}^{k}\|\omega_{n,i} - \omega_{n,i-1}\|^2
+\sum_{i=1}^{k}|r_{n,i} - r_{n,i-1}|^2\\
&\quad
+2\nu\tau_n\sum_{i=1}^{k}\sum_{j=1}^{i}
a_{i,j}\langle\nabla\omega_{n,j},\nabla\omega_{n,i}\rangle
+2\gamma\tau_n\sum_{i=1}^{k}\sum_{j=1}^{i}
a_{i,j}(r_{n,i}-1)(r_{n,j}-1)\\
&=
2\tau_n\sum_{i=1}^{k}\sum_{j=0}^{i-1}
\hat a_{i,j}\langle f_{n,j},\omega_{n,i}\rangle
+2\gamma\tau_n\sum_{i=1}^{k}\sum_{j=1}^{i} a_{i,j}(1-r_{n,i}).
\end{aligned}
\end{equation}

Step 2. [Coercivity of the dissipative terms]
We next estimate the terms on the left-hand side of
\eqref{eq:l2-stage-sum}. Applying
\cref{lem:stage-coefficient-bounds}, we obtain
\begin{equation}\label{eq:l2-implicit-omega}
2\nu\tau_n\sum_{i=1}^{k}\sum_{j=1}^{i}
a_{i,j}\langle\nabla\omega_{n,j},\nabla\omega_{n,i}\rangle
\ge
2\nu\lambda_I\tau_n\sum_{i=1}^{k}\|\nabla\omega_{n,i}\|^2.
\end{equation}
Similarly,
\begin{equation}\label{eq:l2-implicit-r}
2\gamma\tau_n\sum_{i=1}^{k}\sum_{j=1}^{i}
a_{i,j}(r_{n,i}-1)(r_{n,j}-1)
\ge
2\gamma\lambda_I\tau_n\sum_{i=1}^{k}|1-r_{n,i}|^2.
\end{equation}

Step 3. [Bounds for the linear scalar term and external forcing]
By the Cauchy--Schwarz inequality and Young's inequality,
\begin{equation}\label{eq:l2-r-linear-bound}
\begin{aligned}
2\gamma\tau_n
\left|\sum_{i=1}^{k}\sum_{j=1}^{i} a_{i,j}(1-r_{n,i})\right|
&\le 2\gamma\tau_n
\left(\sum_{i=1}^{k}|1-r_{n,i}|^2\right)^{1/2}\\
&\le \gamma\lambda_I\tau_n\sum_{i=1}^{k}|1-r_{n,i}|^2
+\frac{\gamma}{\lambda_I}\tau_n,
\end{aligned}
\end{equation}
Here we used
$\max_{1\le k\le2} \left(\sum_{i=1}^{k}\left|\sum_{j=1}^{i}a_{i,j}\right|^2 \right)^{1/2}\le 1$.

For the external forcing term, \cref{lem:stage-coefficient-bounds}, Poincaré
inequality \eqref{eqn:poincare}, and Young's inequality imply
\begin{equation}\label{eq:l2-force-bound}
\begin{aligned}
2\tau_n\sum_{i=1}^{k}\sum_{j=0}^{i-1}
\hat a_{i,j}\langle f_{n,j},\omega_{n,i}\rangle \le &
2\sigma_E\tau_n
\left(\sum_{j=0}^{k-1}\|f_{n,j}\|^2\right)^{1/2}
\left(\sum_{i=1}^{k}\|\omega_{n,i}\|^2\right)^{1/2}\\
\le &
\nu\lambda_I\tau_n\sum_{i=1}^{k}\|\nabla\omega_{n,i}\|^2
+\frac{\sigma_E^2C_P}{\nu\lambda_I}\tau_n
\sum_{j=0}^{k-1}\|f_{n,j}\|^2.
\end{aligned}
\end{equation}
Substituting
\eqref{eq:l2-implicit-omega}--\eqref{eq:l2-r-linear-bound} and
\eqref{eq:l2-force-bound} into \eqref{eq:l2-stage-sum}, and discarding the
nonnegative increment terms, yields
\begin{equation}\label{eq:l2-stage-dissipative}
\begin{aligned}
& E_{n,k}-E_{n,0}
+\nu\lambda_I\tau_n\sum_{i=1}^{k}\|\nabla\omega_{n,i}\|^2
+\gamma\lambda_I\tau_n\sum_{i=1}^{k}|1-r_{n,i}|^2 \\
\le & \frac{\sigma_E^2C_P}{\nu\lambda_I}\tau_n
\sum_{j=0}^{k-1}\|f_{n,j}\|^2+\frac{\gamma}{\lambda_I}\tau_n.
\end{aligned}
\end{equation}

Step 4. [Uniform-in-time estimate]
Let $\alpha:=\min\left\{\nu\lambda_I/C_P,\gamma\lambda_I\right\}$,
and set $E^n:=\|\omega^n\|^2+|1-r^n|^2$. Poincare's inequality implies, for
each $1\le k\le2$,
\[
\nu\lambda_I\sum_{i=1}^{k}\|\nabla\omega_{n,i}\|^2
+\gamma\lambda_I\sum_{i=1}^{k}|1-r_{n,i}|^2
\ge
\alpha E_{n,k}.
\]
Since $\sum_{j=0}^{k-1}\|f_{n,j}\|^2\le 2C_f$ for $k=1,2$,
\eqref{eq:l2-stage-dissipative} gives
\[
\left(1+\alpha\tau_n\right)E_{n,k}\le E_{n,0}+C_1\tau_n,
\qquad
C_1:=\frac{2\sigma_E^2C_PC_f}{\nu\lambda_I}+\frac{\gamma}{\lambda_I}.
\]
Applying \cref{lem:damped-discrete-gronwall}, we obtain
\begin{equation*}
E_{n,k} \le \frac{1}{1 + \alpha t_{n+1}}E^0+\frac{C_1}{\alpha}.
\end{equation*}
The proof is complete.
\end{proof}

\begin{corollary}[Uniform $H^1$ sliding-window stage dissipation]\label{cor:sliding-H1}
Under the assumptions of \cref{lem:l2-uniform-bound},
for every fixed window length $T_*>0$,
\begin{equation}\label{eq:sliding-H1}
\sup_{m\ge0}\;\sup_{\substack{N>m\\\sum_{n=m}^{N-1}\tau_n\le T_*}}
\sum_{n=m}^{N-1}\tau_n\sum_{i=1}^{2}
\|\nabla\omega_{n,i}\|^2
\le
\frac{E^0+C_1/\alpha+C_1T_*}{\nu\lambda_I},
\end{equation}
where $E^0:=\|\omega^0\|^2+|1-r^0|^2$, and $\alpha$ and $C_1$ are defined in
\cref{lem:l2-uniform-bound}. In particular, the right-hand side is independent
of the starting index $m$ and of the variable time-step sequence.
\end{corollary}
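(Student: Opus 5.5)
We sum the stagewise dissipative inequality \eqref{eq:l2-stage-dissipative} with $k=2$ over a window of steps and telescope. Since $E_{n,2}=E^{n+1}$ and $E_{n,0}=E^n$, taking $k=2$ in \eqref{eq:l2-stage-dissipative} and dropping the nonnegative scalar dissipation term $\gamma\lambda_I\tau_n\sum_i|1-r_{n,i}|^2$ gives, for each $n\ge0$,
\[
E^{n+1}-E^n+\nu\lambda_I\tau_n\sum_{i=1}^{2}\|\nabla\omega_{n,i}\|^2
\le \frac{\sigma_E^2C_P}{\nu\lambda_I}\tau_n\bigl(\|f_{n,0}\|^2+\|f_{n,1}\|^2\bigr)+\frac{\gamma}{\lambda_I}\tau_n
\le C_1\tau_n,
\]
where the last step uses $\|f_{n,j}\|^2\le C_f$ and the definition of $C_1$ in \cref{lem:l2-uniform-bound}.

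Fix $m\ge0$ and $N>m$ with $\sum_{n=m}^{N-1}\tau_n\le T_*$. Summing the inequality over $n=m,\dots,N-1$ telescopes the energy differences:
\[
\nu\lambda_I\sum_{n=m}^{N-1}\tau_n\sum_{i=1}^{2}\|\nabla\omega_{n,i}\|^2
\le E^m-E^N+C_1\sum_{n=m}^{N-1}\tau_n
\le E^m+C_1T_* ,
\]
using $E^N\ge0$.

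It remains to bound $E^m$ uniformly in $m$. For $m=0$ this is just $E^0$. For $m\ge1$, apply \eqref{eq:L2-stage-bound} with $n=m-1$ and $k=2$, noting $E_{m-1,2}=E^m$:
\[
E^m\le \frac{E^0}{1+\alpha t_m}+\frac{C_1}{\alpha}\le E^0+\frac{C_1}{\alpha}.
\]
Dividing by $\nu\lambda_I$ then gives \eqref{eq:sliding-H1}. The resulting bound depends only on $E^0$, $C_1$, $\alpha$ and $T_*$, so it is independent of $m$ and of the step sequence; no upper bound on $\tau_n$ is needed.

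There is no substantial obstacle. The one point to check is that \eqref{eq:l2-stage-dissipative} still holds after dropping the $r$-dissipation term; it does, since that term is nonnegative. The only other point is the index bookkeeping $E_{n,2}=E^{n+1}$, which follows from stiff accuracy.
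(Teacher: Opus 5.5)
Your proposal is correct and follows the paper's proof essentially verbatim: take $k=2$ in \eqref{eq:l2-stage-dissipative}, bound the right-hand side by $C_1\tau_n$, telescope over the window, drop $E^N\ge0$, and bound $E^m\le E^0+C_1/\alpha$ via \eqref{eq:L2-stage-bound}. Your explicit index bookkeeping is a slightly more careful version of the paper's one-line appeal to \eqref{eq:L2-stage-bound}: you use $n=m-1$ for $m\ge1$ and treat $m=0$ separately.
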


\begin{proof}
Taking $k=2$ in \eqref{eq:L2-stage-bound} gives
\[
E^n=\|\omega^n\|^2+|1-r^n|^2
\le E^0+\frac{C_1}{\alpha},
\qquad n\ge0.
\]
Taking $k=2$ in \eqref{eq:l2-stage-dissipative} and using
$\sum_{j=0}^{1}\|f_{n,j}\|^2\le2C_f$ gives
\[
\nu\lambda_I\tau_n\sum_{i=1}^{2}\|\nabla\omega_{n,i}\|^2
+\gamma\lambda_I\tau_n\sum_{i=1}^{2}|1-r_{n,i}|^2
\le
E^n-E^{n+1}+C_1\tau_n.
\]
Summing from $n=m$ to $N-1$ yields
\[
\nu\lambda_I\sum_{n=m}^{N-1}\tau_n\sum_{i=1}^{2}\|\nabla\omega_{n,i}\|^2
+\gamma\lambda_I\sum_{n=m}^{N-1}\tau_n\sum_{i=1}^{2}|1-r_{n,i}|^2
\le
E^m-E^N+C_1\sum_{n=m}^{N-1}\tau_n .
\]
Since $E^N\ge0$, $E^m\le E^0+C_1/\alpha$, and
$\sum_{n=m}^{N-1}\tau_n\le T_*$, we obtain
\[
\nu\lambda_I\sum_{n=m}^{N-1}\tau_n\sum_{i=1}^{2}\|\nabla\omega_{n,i}\|^2
\le
E^0+\frac{C_1}{\alpha}+C_1T_*.
\]
Dividing by $\nu\lambda_I$ and taking the two suprema proves
\eqref{eq:sliding-H1}.
\end{proof}

\begin{remark}[Energy compatibility with the velocity--pressure formulation]\label{rem:primitive-variable}
At the continuous-in-space energy level, the above $L^2$ uniform-in-time estimate has a
counterpart in primitive variables. Applying
the Leray--Hopf projection $\mathcal P$ eliminates the pressure and gives the
projected velocity equation
\[
\begin{aligned}
\bm u_t+\nu A\bm u+B_{\rm vel}(\bm u,\bm u)&=\bm F,\\
A&=-\mathcal P\Delta,\\
B_{\rm vel}(\bm u,\bm v)&=\mathcal P(\bm u\cdot\nabla\bm v),
\qquad \bm F=\mathcal P\bm f.
\end{aligned}
\]
The key identities
\[
\langle B_{\rm vel}(\bm u,\bm v),\bm v\rangle=0,
\qquad
\langle A\bm v,\bm v\rangle=\|\nabla\bm v\|^2
\]
replace the corresponding vorticity identities used above. Hence the same
proof gives the same uniform $L^2$ bound and sliding-window stage dissipation
estimate for the velocity variable, with $\omega$ replaced by $\bm u$ and $f$
replaced by $\bm F$. For a fully discrete primitive-variable method, the same
conclusion requires a skew-symmetric discrete convective form and exact (or
suitably controlled) discrete incompressibility. We do not establish here
primitive-variable stage uniqueness, convergence, or a uniform $H^1$ bound.
\end{remark}

The higher-regularity estimates in the next two subsections serve two
purposes. First, their finite-time counterparts provide the regularity needed
to control the stage increments and the explicit transport term in the
convergence analysis; in particular, the numerical $H^2$ bound is used
essentially in \cref{prop:small-increments}. 
Second, eventual uniform $H^1$ and
$H^2$ bounds provide compactness and uniform regularity for the long-time
numerical dynamics. Extending the estimates proved below to eventual $H^1$
and $H^2$ bounds for the stated initial data
$\omega^0\in\dot L^s(\Omega)$, $s>2$, by means of discrete parabolic
smoothing would provide key ingredients for future work on the convergence
of global attractors and invariant measures
\cite{wang2010approximation,wang2016numerical,gottlieb2012long,tone2015double}.
Such an extension and the corresponding dynamical-systems convergence are
not claimed here.

\subsection{Uniform-in-time \texorpdfstring{$H^1$}{H1} norm estimate}

\begin{theorem}\label{lem:h1-uniform-bound}
Assume that $\omega^0\in H^2(\Omega)$ and
$f\in L^\infty(0,\infty;L^2(\Omega))$. Suppose further that
$\tau_*<\infty$ and $\rho_*<\infty$. For every fixed $T_*>\tau_*$,
there exists a constant $C>0$,
independent of $n$ and of the particular mesh within fixed bounds for
$\tau_*$ and $\rho_*$, such that, for every $n\ge0$ and $1\le i\le2$,
\begin{equation}\label{eq:H1-stage-uniform}
\begin{aligned}
\|\nabla\omega_{n,i}\|^2
+\frac{\nu\lambda_I}{4\rho_*}\tau_n\|\Delta\omega_{n,i}\|^2
\le{}&
\frac{1}{1+\alpha t_n}
\left(
\|\nabla\omega^0\|^2
+\frac{\nu\lambda_I}{4\rho_*}\tau_0\|\Delta\omega^0\|^2
\right)\\
&+
C\left(
1+\frac{1}{\alpha(T_*-\tau_*)}
\right)M_{T_*}.
\end{aligned}
\end{equation}
Here $\alpha = 3\nu\lambda_I/(4C_P+\nu\lambda_I\tau_*)$, and
$M_{T_*}$ is defined in \eqref{eq:h1-source-window} in terms of the forcing
bound and the sliding-window estimate from \cref{cor:sliding-H1}.
\end{theorem}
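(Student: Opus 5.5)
We test the stage equation \eqref{eqn:isdirk2-mr-ccsav-omega} against $-2\Delta\omega_{n,i}$, sum over $i=1,\dots,k$, and mimic Step~1 of the proof of \cref{lem:l2-uniform-bound}. The telescoping identity gives
\[
\|\nabla\omega_{n,k}\|^2-\|\nabla\omega_{n,0}\|^2+\sum_{i=1}^k\|\nabla(\omega_{n,i}-\omega_{n,i-1})\|^2 .
\]
The implicit sum $2\nu\tau_n\sum_{i}\sum_{j\le i}a_{i,j}\langle\Delta\omega_{n,j},\Delta\omega_{n,i}\rangle$ is bounded below by $2\nu\lambda_I\tau_n\sum_i\|\Delta\omega_{n,i}\|^2$ via \eqref{eq:implicit-stage-coercivity}. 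The forcing term is handled by \eqref{eq:explicit-stage-bound} and Young's inequality, costing $\frac{\nu\lambda_I}{4}\tau_n\sum_i\|\Delta\omega_{n,i}\|^2+C\tau_nC_f$.

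There is no SAV cancellation at this level, so the transport term is the real work. It is
\[
2\tau_nG_\omega(r_{n,i})\sum_{j}\hat a_{i,j}\langle B_{n,j},\Delta\omega_{n,i}\rangle .
\]
The factor $|G_\omega(r_{n,i})|=|1-r_{n,i}^2|$ is uniformly bounded by \cref{lem:l2-uniform-bound}, since $|1-r_{n,i}|$ is bounded. For the transport pairing we use the periodic 2D estimate
\[
|\langle B(\phi),\Delta\chi\rangle|\le C\|\phi\|^{1/2}\|\nabla\phi\|\,\|\Delta\phi\|^{1/2}\|\Delta\chi\| ,
\]
obtained from Ladyzhenskaya and $\|\nabla^\perp(-\Delta)^{-1}\phi\|_{L^4}\le C\|\phi\|$. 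Combining this with \eqref{eq:explicit-stage-bound}, Young's inequality, and the uniform $L^2$ bound, the explicit stages $j=0,1$ contribute
\[
\frac{\nu\lambda_I}{4}\tau_n\sum_{i}\|\Delta\omega_{n,i}\|^2+\frac{\nu\lambda_I}{4}\tau_n\sum_{j=0}^{k-1}\|\Delta\omega_{n,j}\|^2+C\tau_n\sum_{j=0}^{k-1}\|\nabla\omega_{n,j}\|^4 .
\]
The $j=1$ parts are absorbed by the stage dissipation. The $j=0$ part, $\tau_n\|\Delta\omega^n\|^2$, belongs to the previous step. This is why the modified quantity
\[
Y^n:=\|\nabla\omega^n\|^2+\frac{\nu\lambda_I}{4\rho_*}\tau_n\|\Delta\omega^n\|^2
\]
appears in the statement. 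Since $\tau_n\le\rho_*\tau_{n-1}$ and $\omega^n=\omega_{n-1,2}$, the term $\frac{\nu\lambda_I}{4}\tau_n\|\Delta\omega^n\|^2$ is bounded by $\frac{\nu\lambda_I}{4}\rho_*\tau_{n-1}\|\Delta\omega_{n-1,2}\|^2$, which is controlled by the dissipation retained at step $n-1$. For $n=0$ the assumption $\omega^0\in H^2$ supplies this term.

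We then close with Poincar\'e, $\|\nabla\omega\|^2\le C_P\|\Delta\omega\|^2$. The retained dissipation $\frac{3}{4}\nu\lambda_I\tau_n\|\Delta\omega_{n,k}\|^2$ controls both $\alpha\tau_n\|\nabla\omega_{n,k}\|^2$ and the $\tau_n\|\Delta\|^2$ part of $Y$. This yields an inequality of the form
\[
(1+\alpha\tau_n)Y_{n,k}\le Y^n+\tau_n g_n,\qquad \alpha=\frac{3\nu\lambda_I}{4C_P+\nu\lambda_I\tau_*},
\]
where the denominator comes from comparing $\tau_n\|\Delta\omega\|^2$ with $\|\nabla\omega\|^2+c\tau_n\|\Delta\omega\|^2$ using $\tau_n\le\tau_*$. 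The source is
\[
g_n=C\Big(C_f+\sum_{j}\|\nabla\omega_{n,j}\|^2\,\|\nabla\omega_{n,j}\|^2\Big).
\]

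\textbf{Main obstacle.} The quartic term $\|\nabla\omega_{n,j}\|^4$ is not a priori bounded, so the source $\sum_p\tau_pg_p$ cannot be controlled directly. The plan is to treat it in Gronwall form: absorb it as a growth factor $\exp(C\sum\tau\|\nabla\omega\|^2)$ over each window, which \cref{cor:sliding-H1} bounds uniformly. Then apply a uniform-Gronwall argument on windows of length $T_*$, using \cref{cor:sliding-H1} once more for an averaged bound of $\|\nabla\omega\|^2$ on each window. This produces $M_{T_*}$, and \cref{lem:discrete-gronwall} then gives \eqref{eq:H1-stage-uniform}. A stage-$0$ subtlety must also be handled: $\|\nabla\omega^n\|^2$ for $j=0$ is not among the stage terms $i=1,2$ controlled by \cref{cor:sliding-H1}. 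This is resolved by noting $\omega^n=\omega_{n-1,2}$ and shifting the index, which again uses $\rho_*$.
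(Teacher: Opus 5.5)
\textbf{Where the proposal breaks.} Your skeleton matches the paper's: testing with $-2\Delta\omega_{n,i}$, stage coercivity, the corrected quantity $Y^n$, the rate $\alpha=3\nu\lambda_I/(4C_P+\nu\lambda_I\tau_*)$, the shift $\omega^n=\omega_{n-1,2}$ with $\tau_n\le\rho_*\tau_{n-1}$, and \cref{lem:discrete-gronwall}. The gap is your transport estimate. It is lossy, and it alone creates the quartic ``main obstacle.'' The ingredients you cite, $\|\nabla^\perp(-\Delta)^{-1}\phi\|_{L^4}\le C\|\phi\|$ and Ladyzhenskaya applied to $\nabla\phi$, already give
\[
\|B(\phi)\|\le C\|\phi\|\,\|\nabla\phi\|^{1/2}\|\Delta\phi\|^{1/2},
\qquad\text{i.e.}\qquad
\|B(\phi)\|^2\le C\|\phi\|^2\|\nabla\phi\|\,\|\Delta\phi\|,
\]
which is \eqref{eqn:h1_b_est}. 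Your bound $\|\phi\|^{1/2}\|\nabla\phi\|\,\|\Delta\phi\|^{1/2}$ is weaker by the factor $(\|\nabla\phi\|/\|\phi\|)^{1/2}$. With the sharp form and the uniform $L^2$ bound, Young's inequality leaves only $\varepsilon\tau_n\|\Delta\omega_{n,j}\|^2+C_\varepsilon\tau_n\|\nabla\omega_{n,j}\|^2$. The source is therefore \emph{linear} in $\|\nabla\omega\|^2$. Its window sums are bounded directly by \cref{cor:sliding-H1}, together with the one-step shift for $j=0$ and the term $\tau_*\|\nabla\omega^0\|^2$ when $m=0$. That is exactly $M_{T_*}$, and \cref{lem:discrete-gronwall} then closes the estimate with no a priori $H^1$ bound. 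A smaller point: take the Young weight on the $j=0$ term to be $\frac{\nu\lambda_I}{4\rho_*}$, not $\frac{\nu\lambda_I}{4}$. After the shift, your term becomes up to $\frac{\nu\lambda_I}{4}\rho_*\tau_{n-1}\|\Delta\omega_{n-1,2}\|^2$. The retained dissipation cannot absorb this once $\rho_*$ is large, and the weight is also inconsistent with the one in your own $Y^n$.

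\textbf{Why the uniform-Gronwall detour does not close as sketched.} (i) The $j=1$ term $\tau_n\|\nabla\omega_{n,1}\|^4$ lives at the intermediate stage, not at $Y^n$. Relating $\|\nabla\omega_{n,1}\|^2$ to $Y^n$ through the $k=1$ inequality reintroduces $\tau_n\|\nabla\omega^n\|^4$. A growth-factor form then needs an extra pointwise bound such as $\tau_n\|\nabla\omega^n\|^2\le C$, which you do not derive. (ii) The averaging step of the uniform Gronwall lemma needs a window bound on $\sum_p\tau_pY^p$, and $Y^p$ contains $\tau_p\|\Delta\omega^p\|^2$. \cref{cor:sliding-H1} controls only $\sum\tau\|\nabla\omega\|^2$, so this requires a separate argument. (iii) Even if completed, this route puts $\exp\bigl(C\sum\tau\|\nabla\omega\|^2\bigr)$ in front of the initial data. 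It also makes the source depend on the resulting $\sup_n\|\nabla\omega^n\|^2$. So it does not yield \eqref{eq:H1-stage-uniform} with the decaying coefficient $(1+\alpha t_n)^{-1}$ and the $M_{T_*}$ of \eqref{eq:h1-source-window}. The missing idea is just the sharp interpolation above, which makes the whole detour unnecessary.
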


\begin{proof}
Proposition~\ref{prop:stage-solvability} justifies the tests below.

Step 1. [Preliminary $H^1$ estimate]
Taking the inner product of 
\eqref{eqn:isdirk2-mr-ccsav-omega} with $-2\Delta\omega_{n,i}$ gives
\[
\begin{aligned}
&\|\nabla\omega_{n,i}\|^2-\|\nabla\omega_{n,i-1}\|^2
+\|\nabla\delta_\tau\omega_{n,i}\|^2
+2\nu\tau_n\sum_{j=1}^{i}
a_{i,j}\langle\Delta\omega_{n,j},\Delta\omega_{n,i}\rangle\\
&=
2\tau_n\sum_{j=0}^{i-1}
\hat a_{i,j}\langle f_{n,j},-\Delta\omega_{n,i}\rangle
-2\tau_nG_\omega(r_{n,i})
\sum_{j=0}^{i-1}
\hat a_{i,j}\langle B_{n,j},-\Delta\omega_{n,i}\rangle .
\end{aligned}
\]
Summing over $i=1,\ldots,k$ and applying
\cref{lem:stage-coefficient-bounds} to the implicit diffusion term yields
\begin{equation}\label{eq:h1-stage-raw}
\begin{aligned}
&\|\nabla\omega_{n,k}\|^2-\|\nabla\omega^n\|^2
+\sum_{i=1}^{k}\|\nabla\delta_\tau\omega_{n,i}\|^2
+2\nu\lambda_I\tau_n\sum_{i=1}^{k}\|\Delta\omega_{n,i}\|^2\\
\le &
2\tau_n\sum_{i=1}^{k}\sum_{j=0}^{i-1}
\hat a_{i,j}\langle f_{n,j},-\Delta\omega_{n,i}\rangle +2\tau_n\sum_{i=1}^{k} |G_\omega(r_{n,i})|
\left|
\sum_{j=0}^{i-1}
\hat a_{i,j}\langle B_{n,j},-\Delta\omega_{n,i}\rangle
\right|.
\end{aligned}
\end{equation}

Step 2. [Estimate of the right-hand side]
The forcing term is controlled by the explicit coefficient bound and Young's
inequality:
\begin{equation}\label{eq:h1-force}
\begin{aligned}
2\tau_n\sum_{i=1}^{k}\sum_{j=0}^{i-1}
\hat a_{i,j}\langle f_{n,j},-\Delta\omega_{n,i}\rangle \le
\frac{\nu\lambda_I}{2}\tau_n\sum_{i=1}^{k}\|\Delta\omega_{n,i}\|^2
+\frac{2\sigma_E^2}{\nu\lambda_I}\tau_n
\sum_{j=0}^{k-1}\|f_{n,j}\|^2 .
\end{aligned}
\end{equation}
As for the nonlinear term, since $\omega$ has zero mean, the Agmon inequality
and elliptic regularity give
\[
\|\nabla^\perp(-\Delta)^{-1}\omega\|_{L^\infty}\le C\|\omega\|^\frac12\|\nabla \omega\|^\frac12,
\qquad
\|\nabla\omega\|\le
\|\omega\|^{\frac12}\|\Delta\omega\|^{\frac12}.
\]
Consequently, we have
\begin{equation}\label{eqn:h1_b_est}
\left\| B(\omega) \right\|^2 \le C \|\omega\|^2 \|\nabla \omega\| \|\Delta \omega\|.
\end{equation}

\cref{lem:l2-uniform-bound} provides uniform bounds for $\|\omega\|$ and $|r_{n,i}|$, which implies that
$|G_\omega(r_{n,i})|=|1-r_{n,i}^2|$ is also uniformly bounded. Applying the
estimate \eqref{eqn:h1_b_est}, and combining
it with \cref{lem:stage-coefficient-bounds} and Young's inequality, we obtain
\begin{equation}\label{eq:h1-nonlinear-bound}
\begin{aligned}
&2\tau_n\sum_{i=1}^{k} |G_\omega(r_{n,i})|
\left|
\sum_{j=0}^{i-1}
\hat a_{i,j}\langle B_{n,j},-\Delta\omega_{n,i}\rangle
\right|\\
&\le
\frac{\nu\lambda_I}{2}\tau_n\sum_{i=1}^{k}\|\Delta\omega_{n,i}\|^2
+\frac{\nu\lambda_I}{4\rho_*}\tau_n\|\Delta\omega^n\|^2
+C \tau_n\sum_{j=0}^{k-1}\|\nabla\omega_{n,j}\|^2 .
\end{aligned}
\end{equation}
Here, the constant $C$ depends on the $L^2$ estimate given in \cref{lem:l2-uniform-bound}, the parameters $\nu$, $\sigma_E$, $\lambda_I$, and the maximal step ratio $\rho_*$, and is independent of $n$ and the time steps.

Step 3. [Revised $H^1$ recursion]
Combining \eqref{eq:h1-stage-raw}--\eqref{eq:h1-nonlinear-bound},
using the boundedness of the forcing, and discarding nonnegative increment
terms, we obtain
\begin{equation}\label{eq:h1-preliminary-recursion}
\begin{aligned}
&\|\nabla\omega_{n,k}\|^2-\|\nabla\omega^n\|^2
+\nu \lambda_I\tau_n\sum_{i=1}^{k}\|\Delta\omega_{n,i}\|^2\\
&\le
 \frac{\nu\lambda_I}{4\rho_*}\tau_n\|\Delta\omega^n\|^2
+ C\tau_n\Big(C_f+ \sum_{j=0}^{k-1}
\|\nabla\omega_{n,j}\|^2\Big),
\qquad 1\le k\le2.
\end{aligned}
\end{equation}

Step 4. [Uniform-in-time bound]
Taking $k=2$ in \eqref{eq:h1-preliminary-recursion} gives
\begin{equation}\label{eq:h1-k2}
\begin{aligned}
&\|\nabla\omega^{n+1}\|^2
+\nu\lambda_I\tau_n\left(\|\Delta\omega_{n,1}\|^2+\|\Delta\omega^{n+1}\|^2\right)\\
&\le
\|\nabla\omega^n\|^2 + \frac{\nu\lambda_I}{4\rho_*}\tau_n\|\Delta\omega^n\|^2
+C\tau_n \left(C_f+\|\nabla\omega^n\|^2+\|\nabla\omega_{n,1}\|^2\right).
\end{aligned}
\end{equation}
Since $\tau_{n+1}\le\rho_*\tau_n$, we have
\[
\frac{\nu\lambda_I}{4\rho_*}\tau_{n+1}
\|\Delta\omega^{n+1}\|^2
\le
\frac{\nu\lambda_I}{4}\tau_n
\|\Delta\omega^{n+1}\|^2.
\]
Adding $\frac{\nu\lambda_I}{4\rho_*}\tau_{n+1} \|\Delta\omega^{n+1}\|^2$
to both sides of \eqref{eq:h1-k2}, absorbing this term into the
viscous dissipation, we obtain
\begin{equation}\label{eq:h1-modified-energy}
\begin{aligned}
&\|\nabla\omega^{n+1}\|^2
+\frac{\nu\lambda_I}{4\rho_*}\tau_{n+1} \|\Delta\omega^{n+1}\|^2
+\nu\lambda_I\tau_n
\left(
\|\Delta\omega_{n,1}\|^2 + \frac{3}{4}\|\Delta\omega^{n+1}\|^2
\right)\\
&\le
\|\nabla\omega^n\|^2
+\frac{\nu\lambda_I}{4\rho_*}\tau_n
 \|\Delta\omega^n\|^2
+C\tau_n
\left( C_f+\|\nabla\omega^n\|^2 +\|\nabla\omega_{n,1}\|^2 \right).
\end{aligned}
\end{equation}
By Poincar\'e's inequality and
$\tau_{n+1}\le\rho_*\tau_n\le\rho_*\tau_*$, we have
\[
\begin{aligned}
\|\nabla\omega^{n+1}\|^2 +\frac{\nu\lambda_I}{4\rho_*}\tau_{n+1} \|\Delta\omega^{n+1}\|^2
\le \left(C_P+\frac{\nu\lambda_I}{4}\tau_*\right) \|\Delta\omega^{n+1}\|^2.
\end{aligned}
\]
Hence, setting $\alpha:= 3\nu\lambda_I/(4C_P+\nu\lambda_I\tau_*)>0$, we obtain
\[
\frac{3\nu\lambda_I}{4}\tau_n
\|\Delta\omega^{n+1}\|^2
\ge
\alpha\tau_n
\left(
\|\nabla\omega^{n+1}\|^2
+\frac{\nu\lambda_I}{4\rho_*}\tau_{n+1}
 \|\Delta\omega^{n+1}\|^2
\right).
\]
Consequently,
\begin{equation}\label{eq:h1-gronwall-form}
\begin{aligned}
&(1+\alpha\tau_n)
\left(
\|\nabla\omega^{n+1}\|^2
+\frac{\nu\lambda_I}{4\rho_*}\tau_{n+1}
 \|\Delta\omega^{n+1}\|^2
\right)\\
&\le
\|\nabla\omega^n\|^2
+\frac{\nu\lambda_I}{4\rho_*}\tau_n
 \|\Delta\omega^n\|^2 +C\tau_n \left(C_f+\|\nabla\omega^n\|^2 +\|\nabla\omega_{n,1}\|^2\right).
\end{aligned}
\end{equation}

We next make the sliding-window bound for the source term explicit. For
$S>0$, set
\[
C_S^{(1)}:=
\frac{\|\omega^0\|^2+|1-r^0|^2+C_1/\alpha+C_1S}{\nu\lambda_I}.
\]
For a chosen window length $T_*>\tau_*$, define
\[
M_{T_*}:=C_fT_*+(1+\rho_*)C_{T_*+\tau_*}^{(1)}
+\tau_*\|\nabla\omega^0\|^2.
\]
By \cref{cor:sliding-H1}, the stage-$1$ contribution is controlled on the
original window. For the endpoint contribution, use
$\omega^n=\omega_{n-1,2}$ for $n\ge1$ and
$\tau_n\le\rho_*\tau_{n-1}$; this enlarges the window by at most $\tau_*$
and leaves only the displayed initial contribution when $m=0$. Thus the
source term has the
sliding-window bound
\begin{equation}\label{eq:h1-source-window}
\sup_{m\ge0}\;\sup_{N>m:\;\sum_{n=m}^{N-1}\tau_n\le T_{*}}
\sum_{n=m}^{N-1}\tau_n
\left(C_f+\|\nabla\omega^n\|^2+\|\nabla\omega_{n,1}\|^2\right)
\le M_{T_{*}}.
\end{equation}

Applying \cref{lem:discrete-gronwall} to
\eqref{eq:h1-gronwall-form}, we obtain
\begin{equation}\label{eq:h1-modified-uniform}
\begin{aligned}
\|\nabla\omega^n\|^2
+\frac{\nu\lambda_I}{4\rho_*}\tau_n
 \|\Delta\omega^n\|^2 \le &
\frac{1}{1+\alpha t_n}
\left(
\|\nabla\omega^0\|^2
+\frac{\nu\lambda_I}{4\rho_*}\tau_0
 \|\Delta\omega^0\|^2
\right)\\
&+C\left( 1+\frac{1}{\alpha(T_*-\tau_*)} \right)M_{T_*}.
\end{aligned}
\end{equation}
Taking $k=1$ in \eqref{eq:h1-preliminary-recursion} and using
\eqref{eq:h1-modified-uniform}, we further obtain
\begin{equation*}
\begin{aligned}
\|\nabla\omega_{n,1}\|^2 +\frac{\nu\lambda_I}{4\rho_*}\tau_n \|\Delta\omega_{n,1}\|^2
&\le \frac{1}{1+\alpha t_n}
\left(
\|\nabla\omega^0\|^2
+\frac{\nu\lambda_I}{4\rho_*}\tau_0
 \|\Delta\omega^0\|^2
\right)\\
&\quad
+C\left(
1+\frac{1}{\alpha(T_*-\tau_*)}
\right)M_{T_*}.
\end{aligned}
\end{equation*}
Here, the one-step source term has been absorbed into the
sliding-window bound. This completes the proof.
\end{proof}

\begin{corollary}[Uniform sliding-window \texorpdfstring{$H^2$}{H2} stage estimate]
\label{cor:sliding-H2}
Assume that $\omega^0\in H^2(\Omega)$, and
$f\in L^\infty(0,\infty;L^2(\Omega))$. 
Then, for every fixed window length $T_*>\tau_*$, 

\begin{equation}\label{eq:sliding-H2}
\sup_{m\ge0}\;
\sup_{\substack{N>m\\\sum_{n=m}^{N-1}\tau_n\le T_*}}
\sum_{n=m}^{N-1}\tau_n
\sum_{i=1}^{2}\|\Delta\omega_{n,i}\|^2
\le C_{T_*}^{(2)},
\end{equation}
where
\begin{equation}\label{eqn:slide_window_H2_bound}
C_{T_*}^{(2)}
:=
\frac{2}{\nu\lambda_I}
\left[
\|\nabla\omega^0\|^2
+\frac{\nu\lambda_I}{4\rho_*}\tau_0
 \|\Delta\omega^0\|^2
+
C\left(
1+\frac{4C_P+\nu\lambda_I\tau_*}{3\nu\lambda_I(T_*-\tau_*)}
\right)M_{T_*}
\right].
\end{equation}

Here, $M_{T_*}$ is the sliding-window source bound defined in
\eqref{eq:h1-source-window}.
\end{corollary}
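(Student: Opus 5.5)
The plan is to sum the modified energy inequality \eqref{eq:h1-modified-energy} over a window and telescope, with the same structure as the proof of \cref{cor:sliding-H1}. Write
\[
\mathcal H^n:=\|\nabla\omega^n\|^2+\frac{\nu\lambda_I}{4\rho_*}\tau_n\|\Delta\omega^n\|^2 .
\]
In this notation \eqref{eq:h1-modified-energy} reads
\[
\mathcal H^{n+1}+\nu\lambda_I\tau_n\Big(\|\Delta\omega_{n,1}\|^2+\tfrac34\|\Delta\omega_{n,2}\|^2\Big)
\le \mathcal H^n+C\tau_n\big(C_f+\|\nabla\omega^n\|^2+\|\nabla\omega_{n,1}\|^2\big).
\]
Since $\tfrac34\ge\tfrac12$, the dissipative term on the left dominates $\tfrac{\nu\lambda_I}{2}\tau_n\sum_{i=1}^2\|\Delta\omega_{n,i}\|^2$.

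Fix $m\ge0$ and $N>m$ with $\sum_{n=m}^{N-1}\tau_n\le T_*$, and sum from $n=m$ to $N-1$. The $\mathcal H$ terms telescope, and dropping $\mathcal H^N\ge0$ leaves
\[
\frac{\nu\lambda_I}{2}\sum_{n=m}^{N-1}\tau_n\sum_{i=1}^2\|\Delta\omega_{n,i}\|^2
\le \mathcal H^m+C\sum_{n=m}^{N-1}\tau_n\big(C_f+\|\nabla\omega^n\|^2+\|\nabla\omega_{n,1}\|^2\big).
\]
The sliding-window source estimate \eqref{eq:h1-source-window} bounds the last sum by $CM_{T_*}$. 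For $\mathcal H^m$ we use \eqref{eq:h1-modified-uniform}, dropping the factor $(1+\alpha t_m)^{-1}\le1$ and substituting $\alpha=3\nu\lambda_I/(4C_P+\nu\lambda_I\tau_*)$. Then $1/(\alpha(T_*-\tau_*))=(4C_P+\nu\lambda_I\tau_*)/(3\nu\lambda_I(T_*-\tau_*))$, which is exactly the bracket in \eqref{eqn:slide_window_H2_bound}.

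Combining the two bounds, the extra $CM_{T_*}$ from the source sum is absorbed by enlarging the generic constant $C$ in front of $(1+\cdots)M_{T_*}$, since that factor is at least $1$. Multiplying by $2/(\nu\lambda_I)$ gives \eqref{eq:sliding-H2} with the constant $C^{(2)}_{T_*}$ of \eqref{eqn:slide_window_H2_bound}. Both bounds are independent of $m$, so taking the suprema completes the argument.

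The only delicate point is the bookkeeping: the window in \eqref{eq:h1-source-window} has to match the one used here, and the same generic $C$ has to absorb both contributions. Implicitly the corollary also inherits the hypotheses $\tau_*,\rho_*<\infty$ from \cref{lem:h1-uniform-bound}, and the argument relies on them. Beyond that the proof is a direct telescoping.
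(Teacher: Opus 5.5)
Your proposal is correct and follows essentially the same route as the paper: sum \eqref{eq:h1-modified-energy} over the window, telescope and drop the final nonnegative term, bound the starting term by \eqref{eq:h1-modified-uniform} with $\alpha=3\nu\lambda_I/(4C_P+\nu\lambda_I\tau_*)$ substituted, bound the source sum by \eqref{eq:h1-source-window}, and absorb the extra $CM_{T_*}$ into the generic constant. Your remark that the hypotheses $\tau_*,\rho_*<\infty$ are implicitly inherited from \cref{lem:h1-uniform-bound} is also apt.
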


\begin{proof}
Summing \eqref{eq:h1-modified-energy} from $n=m$ to $N-1$ and
discarding the remaining nonnegative terms gives
\begin{equation*}
\begin{aligned}
&\frac{\nu\lambda_I}{2}
\sum_{n=m}^{N-1}\tau_n
\left(
\|\Delta\omega_{n,1}\|^2
+\|\Delta\omega^{n+1}\|^2
\right)\\
&\le
\|\nabla\omega^m\|^2
+\frac{\nu\lambda_I}{4\rho_*}\tau_m
 \|\Delta\omega^m\|^2
+C\sum_{n=m}^{N-1}\tau_n
\left(
C_f+\|\nabla\omega^n\|^2
+\|\nabla\omega_{n,1}\|^2
\right).
\end{aligned}
\end{equation*}
By \eqref{eq:h1-modified-uniform},
\[
\begin{aligned}
\|\nabla\omega^m\|^2
+\frac{\nu\lambda_I}{4\rho_*}\tau_m
 \|\Delta\omega^m\|^2
\le&
\|\nabla\omega^0\|^2
+\frac{\nu\lambda_I}{4\rho_*}\tau_0
 \|\Delta\omega^0\|^2\\
&+
C\left(
1+\frac{1}{\alpha(T_*-\tau_*)}
\right)M_{T_*}.
\end{aligned}
\]
Moreover, \eqref{eq:h1-source-window} gives
\[
\sum_{n=m}^{N-1}\tau_n
\left(C_f+\|\nabla\omega^n\|^2 +\|\nabla\omega_{n,1}\|^2 \right) \le M_{T_*}.
\]
Combining these estimates yields
\[
\begin{aligned}
&\frac{\nu\lambda_I}{2}
\sum_{n=m}^{N-1}\tau_n
\left(
\|\Delta\omega_{n,1}\|^2
+\|\Delta\omega^{n+1}\|^2
\right)\\
&\le
\|\nabla\omega^0\|^2
+\frac{\nu\lambda_I}{4\rho_*}\tau_0
 \|\Delta\omega^0\|^2
+
C\left(
1+\frac{1}{\alpha(T_*-\tau_*)}
\right)M_{T_*}.
\end{aligned}
\]
Since $\omega_{n,2}=\omega^{n+1}$, taking the supremum over all
admissible $m$ and $N$ proves \eqref{eq:sliding-H2}.
\end{proof}

The uniform-in-time $L^2$ and $H^1$ estimates established above allow us
to close the stagewise uniqueness analysis in
\cref{prop:stage-uniqueness-small-step}. More precisely, the uniform
bounds on $\alpha_{n,i}$ and $\beta_{n,i}$ assumed there are direct consequences
of the long-time stability of the numerical stage values. Thus, these
coefficient bounds no longer need to be imposed as independent
assumptions, and uniqueness follows under a uniform time-step
restriction, as summarized below.

\begin{corollary}[Unique solvability of stage equations]
\label{cor:uniform-stage-uniqueness}
Under the assumptions of \cref{lem:l2-uniform-bound,lem:h1-uniform-bound}, fix a priori caps $\bar\tau,\bar\rho>0$ and consider meshes satisfying $\tau_*\le\bar\tau$ and $\rho_*\le\bar\rho$. Then there exist constants $M_\alpha(\bar\tau,\bar\rho) \geq 0$ and $M_\beta(\bar\tau,\bar\rho) \geq 0$, dependent on the uniform $L^2$ and $H^1$ bounds, the $L^\infty(0,\infty;L^2)$ norm of $f$, and the fixed mesh caps, but independent of $n$, $i$, and the particular mesh, such that
\[
|\alpha_{n,i}|\le M_\alpha(\bar\tau,\bar\rho),
\qquad
0\le\beta_{n,i}\le M_\beta(\bar\tau,\bar\rho).
\]
Consequently, all stage equations are uniquely solvable if
\begin{equation}\label{eq:uniform-uniqueness-condition}
\tau_*M_\alpha(\bar\tau,\bar\rho)
+\frac43\tau_*^2M_\beta(\bar\tau,\bar\rho)
<1+\gamma\eta\tau_*.
\end{equation}
\end{corollary}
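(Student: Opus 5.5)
The plan is to bound the two scalars $\alpha_{n,i}=\langle\widetilde B_{n,i},\omega^{(1)}_{n,i}\rangle$ and $\beta_{n,i}=\langle\widetilde B_{n,i},(I-\nu\tau_na_{i,i}\Delta)^{-1}\widetilde B_{n,i}\rangle$ by uniform-in-time norms of the stage data, and then invoke \cref{prop:stage-uniqueness-small-step}. The only inputs will be the uniform $L^2$ bound of \cref{lem:l2-uniform-bound} and the uniform $H^1$ bound of \cref{lem:h1-uniform-bound}.

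\emph{Step 1: the advection data.} Each $B_{n,j}$ with $j=0,1$ is either an accepted value $\omega^n=\omega_{n-1,2}$ (or $\omega^0$) or a first stage $\omega_{n,1}$. \Cref{lem:h1-uniform-bound} gives $\|\nabla\omega_{n,j}\|^2\le K_1$ uniformly in $n$, with $K_1$ depending on $\bar\tau,\bar\rho$ through $M_{T_*}$ and $\rho_*$. For $n=0$ we use $\omega^0\in H^2$ directly. Using $\|\nabla^\perp(-\Delta)^{-1}\omega\|_{L^4}\le C\|\omega\|$ and $\|\nabla\omega\|_{L^{4/3}}\lesssim\|\nabla\omega\|$, we obtain
\[
\|B(\omega)\|_{V'}\le C\|\omega\|\,\|\omega\|_{L^4}\le C\|\omega\|^{3/2}\|\nabla\omega\|^{1/2}.
\]
Alternatively, the simpler bound $\|B(\omega)\|_{L^{4/3}}\lesssim\|u\|_{L^4}\|\nabla\omega\|$ followed by the embedding into $V'$ also works. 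Either way, $\|\widetilde B_{n,i}\|_{V'}\le M_B:=\sigma_E C K_0^{3/4}K_1^{1/4}$, where $K_0$ bounds $\|\omega\|^2$ from \cref{lem:l2-uniform-bound}.

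\emph{Step 2: $\beta_{n,i}$.} Let $L=I-\nu\tau_na_{i,i}\Delta$ and $w=L^{-1}\widetilde B$. Testing with $w$ gives $\|w\|^2+\nu\tau_na_{i,i}\|\nabla w\|^2=\langle\widetilde B,w\rangle$. By Poincaré, $\|w\|_V^2\lesssim\|w\|^2+\|\nabla w\|^2$, but the weight $\nu\tau_n a_{i,i}$ could be small, so this route is not uniform in $\tau_n$. I will instead use the mean-zero coercivity $\|w\|^2\le C_P\|\nabla w\|^2$, which gives
\[
\beta=\langle\widetilde B,w\rangle\le\|\widetilde B\|_{V'}\|\nabla w\|.
\]
If $\|\nabla w\|$ is not uniformly controlled for small $\tau_n$, I switch to $L^2$. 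Since $\omega\in H^1\subset L^p$ for all $p<\infty$ and $u\in L^\infty$ with $\|u\|_{L^\infty}\lesssim\|\omega\|_{L^4}$, we get $\|\widetilde B\|\le C(K_0,K_1)$. Then $\beta\le\|\widetilde B\|\,\|w\|\le\|\widetilde B\|^2$, because $L^{-1}$ is a contraction on $H$. This gives $0\le\beta_{n,i}\le M_\beta$, independent of $\tau_n$; nonnegativity was already shown in \cref{prop:stage-solvability}.

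\emph{Step 3: $\alpha_{n,i}$, the main obstacle.} Here $\omega^{(1)}=L^{-1}R^\omega_{n,i}$ with
\[
R^\omega_{n,i}=\omega_{n,i-1}+\nu\tau_n\sum_{j<i}a_{i,j}\Delta\omega_{n,j}+\tau_n\widetilde f_{n,i}.
\]
The contraction property gives $\|L^{-1}\omega_{n,i-1}\|\le\|\omega_{n,i-1}\|$ and $\|L^{-1}\tau_n\widetilde f\|\le\bar\tau\sigma_E(2C_f)^{1/2}$. For the term $\nu\tau_n a_{2,1}L^{-1}\Delta\omega_{n,1}$ (present only when $i=2$), write $\tau_n\Delta=(I-L)/(\nu a_{2,2})$, so that
\[
\nu\tau_n a_{2,1}L^{-1}\Delta\omega_{n,1}=\frac{a_{2,1}}{a_{2,2}}\bigl(L^{-1}-I\bigr)\omega_{n,1},
\]
whose $L^2$ norm is at most $2|a_{2,1}/a_{2,2}|\,\|\omega_{n,1}\|$. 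Hence $\|\omega^{(1)}_{n,i}\|\le C(K_0,\bar\tau,C_f)$, and $|\alpha_{n,i}|\le\|\widetilde B_{n,i}\|\,\|\omega^{(1)}_{n,i}\|\le M_\alpha$. Throughout this step I must make sure every constant depends only on $\bar\tau,\bar\rho$ and the data, which is where the identity above, rather than an $H^2$ bound, is needed.

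\emph{Step 4: conclusion.} Finally, $a_{i,i}=\eta$ for both stages, so $1+\gamma\tau_na_{i,i}=1+\gamma\eta\tau_n$. The function $\tau\mapsto 1+\gamma\eta\tau-\tau M_\alpha-\tfrac43\tau^2M_\beta$ is concave and equals $1$ at $\tau=0$. Condition \eqref{eq:uniform-uniqueness-condition} says it is positive at $\tau_*$, so it is positive on $(0,\tau_*]$. Therefore \eqref{eq:time_step_restriction} holds for every $\tau_n\le\tau_*$, and \cref{prop:stage-uniqueness-small-step} gives uniqueness stage by stage, by induction over $n$.
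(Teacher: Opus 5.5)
Your proposal is correct and is essentially the paper's proof. It follows the same chain: a uniform $L^2$ bound on $\widetilde B_{n,i}$ from the uniform $L^2$ and $H^1$ bounds and two-dimensional embeddings, then $0\le\beta_{n,i}\le\|\widetilde B_{n,i}\|^2$ by contractivity of $L^{-1}$ on $H$, then $|\alpha_{n,i}|\le\|\widetilde B_{n,i}\|\,\|\omega^{(1)}_{n,i}\|$ with the resolvent bound $\|\nu\tau_n a_{2,1}L^{-1}\Delta\|\le |a_{2,1}|/a_{2,2}$ (equivalent to your identity), and finally \cref{prop:stage-uniqueness-small-step}. The $V'$ estimate in your Step~1 is never used and can be dropped. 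Your concavity argument in Step~4 supplies a justification the paper leaves implicit: that the condition at $\tau_*$ implies the stagewise criterion at every $\tau_n\le\tau_*$, which is not automatic because both sides of that criterion increase with $\tau$.
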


\begin{proof}
Set
\[
C_\omega:=
\sup_{n\ge0}\max_{0\le j\le2}\|\omega_{n,j}\|^2,
C_\omega^{(1)}:=
\sup_{n\ge0}\max_{0\le j\le2}\|\omega_{n,j}\|^2_{H^1},
C_f:=\|f\|^2_{L^\infty(0,\infty;L^2)}.
\]
These constants are finite by \cref{lem:l2-uniform-bound,lem:h1-uniform-bound}.

In two dimensions, elliptic regularity and the embedding $H^2(\Omega)\hookrightarrow L^\infty(\Omega)$ give
\[
\|B_{n,j}\|
\le
C\left\|
\nabla^\perp(-\Delta)^{-1}\omega_{n,j}
\right\|_{H^2}
\|\omega_{n,j}\|_{H^1}
\le CC_\omega^{(1)}.
\]
Since the Runge--Kutta coefficients are fixed, this gives
$\|\widetilde B_{n,i}\|\le CC_\omega^{(1)}$.

Let $L_i:=I-\nu\tau_na_{i,i}\Delta$. The standard resolvent estimates
for $L_i^{-1}$ yield
\[
\begin{aligned}
\|\omega_{n,i}^{(1)}\|
&\le
\|\omega_{n,i-1}\|
+\sum_{j=1}^{i-1}
\frac{|a_{i,j}|}{a_{i,i}}\|\omega_{n,j}\|
+\tau_n\sum_{j=0}^{i-1}
|\hat a_{i,j}|\|f_{n,j}\|\\
&\le C\left(C_\omega^{\frac{1}{2}}+\tau_* C_f^{\frac{1}{2}}\right).
\end{aligned}
\]
Consequently,
\[
|\alpha_{n,i}|
\le
\|\widetilde B_{n,i}\|\,\|\omega_{n,i}^{(1)}\|
\le
CC_\omega^{(1)}\left(C_\omega^{\frac{1}{2}}+\tau_* C_f^{\frac{1}{2}}\right),
\]
and
\[
0\le\beta_{n,i}
=
\left\langle
\widetilde B_{n,i},L_i^{-1}\widetilde B_{n,i}
\right\rangle
\le
\|\widetilde B_{n,i}\|^2
\le C (C_\omega^{(1)})^2.
\]
Thus, one may take
\begin{equation}\label{eq:alpha-beta-uniform-constants}
M_\alpha(\bar\tau,\bar\rho)
=
CC_\omega^{(1)}\left(C_\omega^{\frac{1}{2}}+\bar\tau C_f^{\frac{1}{2}}\right),
\qquad
M_\beta(\bar\tau,\bar\rho)
=
C (C_\omega^{(1)})^2,
\end{equation}
where $C>0$ depends only on the domain $\Omega$ and the fixed
Runge--Kutta coefficients.  The displayed dependence on both mesh caps records
the corresponding dependence of the uniform $H^1$ bound
$C_\omega^{(1)}$ in \cref{lem:h1-uniform-bound}.

Since $a_{i,i}=\eta=(2-\sqrt2)/2$ for both stages and
$\tau_n\le\tau_*$, condition \eqref{eq:uniform-uniqueness-condition}
implies the stagewise criterion \eqref{eq:time_step_restriction} for every
$n$ and $i$. Unique solvability therefore follows from
\cref{prop:stage-uniqueness-small-step}.
\end{proof}

\subsection{Uniform-in-time \texorpdfstring{$H^2$}{H2} norm estimate}

\begin{theorem}\label{lem:h2-uniform-bound}
Assume that $\omega^0\in H^3(\Omega)$ and
$f\in L^\infty(0,\infty;H^1(\Omega))$. 
Then the stage values of $f$ satisfy
\[
\|\nabla f_{n,j}\|^2\le C_f^{(1)}:=\|f\|^2_{L^\infty(0,\infty;H^1(\Omega))},
\qquad n\ge0,\quad 0\le j\le 1.
\]
Suppose further that $\tau_*<\infty$ and $\rho_*<\infty$.
For any fixed $T_*>\tau_*$, there exists a constant $C>0$,
independent of $n$ and of the particular mesh within fixed bounds for
$\tau_*$ and $\rho_*$, such that, for every
$n\ge0$ and $i=1,2$,
\begin{equation}\label{eq:H2-final}
\|\Delta\omega_{n,i}\|^2 \le
\frac{1}{1+\alpha_2 t_n}\|\Delta\omega^0\|^2
+ C\left( 1+\frac{1}{\alpha_2(T_*-\tau_*)} \right)M_{T_*}^{(2)},
\end{equation}
where $\alpha_2:=\nu\lambda_I/C_P$, and $M_{T_*}^{(2)}$ is defined in
\eqref{eq:h2-source-window} in terms of the $H^1$ forcing bound and the
sliding-window estimate \eqref{eqn:slide_window_H2_bound}.
\end{theorem}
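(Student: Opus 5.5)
The plan mirrors the proof of \cref{lem:h1-uniform-bound}, one derivative higher. By \cref{prop:stage-solvability}, $\omega^0\in H^3$ and $f_{n,j}\in V$ give $\omega_{n,i}\in\dot H^3_{\rm per}$ for all stages, which justifies the test functions below. The bound $\|\nabla f_{n,j}\|^2\le C_f^{(1)}$ is immediate from the definition of the stage values.

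\textbf{Step 1: the $H^2$ stage energy identity.} Take the inner product of \eqref{eqn:isdirk2-mr-ccsav-omega} with $2\Delta^2\omega_{n,i}$, i.e.\ test the Laplacian of the stage equation with $2\Delta\omega_{n,i}$. This gives
\[
\|\Delta\omega_{n,i}\|^2-\|\Delta\omega_{n,i-1}\|^2+\|\Delta(\omega_{n,i}-\omega_{n,i-1})\|^2
+2\nu\tau_n\sum_{j\le i}a_{i,j}\langle\nabla\Delta\omega_{n,j},\nabla\Delta\omega_{n,i}\rangle ,
\]
which equals the forcing and transport terms paired against $\Delta^2\omega_{n,i}$. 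After one integration by parts, these become $-2\tau_n\sum\hat a_{i,j}\langle\nabla f_{n,j},\nabla\Delta\omega_{n,i}\rangle$ and the analogous expression with $G_\omega(r_{n,i})\nabla B_{n,j}$.

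Summing over $i\le k$ and applying \cref{lem:stage-coefficient-bounds} with $u_i=\nabla\Delta\omega_{n,i}$ yields the dissipation $2\nu\lambda_I\tau_n\sum_i\|\nabla\Delta\omega_{n,i}\|^2$. The forcing is absorbed by Young's inequality into $\frac{\nu\lambda_I}{2}\tau_n\sum_i\|\nabla\Delta\omega_{n,i}\|^2+C\tau_nC_f^{(1)}$.

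\textbf{Step 2: the nonlinear term, which is the main obstacle.} $|G_\omega(r_{n,i})|$ is bounded uniformly by \cref{lem:l2-uniform-bound}, so the task is to bound $\|\nabla B(\omega)\|$. Writing $\bm u=\nabla^\perp(-\Delta)^{-1}\omega$,
\[
\nabla B=\nabla\bm u\cdot\nabla\omega+\bm u\cdot\nabla\nabla\omega .
\]
I intend to use $\|\bm u\|_{L^\infty}\le C\|\omega\|^{1/2}\|\nabla\omega\|^{1/2}$ and $\|\nabla\bm u\|_{L^4}\le C\|\omega\|_{L^4}\le C\|\omega\|^{1/2}\|\nabla\omega\|^{1/2}$. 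Together with Ladyzhenskaya, $\|\nabla\omega\|_{L^4}\le C\|\nabla\omega\|^{1/2}\|\Delta\omega\|^{1/2}$, this gives
\[
\|\nabla B(\omega)\|^2\le C\|\omega\|\,\|\nabla\omega\|^2\,\|\Delta\omega\|+C\|\omega\|\,\|\nabla\omega\|\,\|\Delta\omega\|^2 .
\]
The $L^2$ and $H^1$ norms are uniformly bounded by \cref{lem:l2-uniform-bound,lem:h1-uniform-bound}, so $\|\nabla B_{n,j}\|^2\le C(1+\|\Delta\omega_{n,j}\|^2)$. Crucially, this growth is only linear in $\|\Delta\omega_{n,j}\|^2$.

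The explicit pairing is then controlled via \eqref{eq:explicit-stage-bound} and Young:
\[
\tfrac{\nu\lambda_I}{2}\tau_n\sum_i\|\nabla\Delta\omega_{n,i}\|^2+C\tau_n\sum_{j\le k-1}\bigl(1+\|\Delta\omega_{n,j}\|^2\bigr).
\]
Unlike the $H^1$ proof, no $\tau_n\|\nabla\Delta\omega^n\|^2$ term needs to be carried, so the modified energy with the $\rho_*$-weighted correction is not needed.

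\textbf{Step 3: recursion and Gronwall.} With $k=2$, use Poincaré, $\|\Delta\omega\|^2\le C_P\|\nabla\Delta\omega\|^2$, to convert the remaining dissipation $\nu\lambda_I\tau_n\|\nabla\Delta\omega^{n+1}\|^2$ into $\alpha_2\tau_n\|\Delta\omega^{n+1}\|^2$. This gives
\[
(1+\alpha_2\tau_n)\|\Delta\omega^{n+1}\|^2\le\|\Delta\omega^n\|^2+\tau_ng_n,
\qquad g_n=C\bigl(C_f^{(1)}+1+\|\Delta\omega^n\|^2+\|\Delta\omega_{n,1}\|^2\bigr).
\]
The sliding-window sum of $\tau_ng_n$ is bounded by \cref{cor:sliding-H2}. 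The stage-1 term is handled directly; the endpoint term is handled via $\omega^n=\omega_{n-1,2}$ and $\tau_n\le\rho_*\tau_{n-1}$, which enlarges the window by at most $\tau_*$ and adds $\tau_*\|\Delta\omega^0\|^2$ when $m=0$. This defines $M^{(2)}_{T_*}$ as in \eqref{eq:h2-source-window}.

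\cref{lem:discrete-gronwall} then gives \eqref{eq:H2-final} at the endpoints. Finally, the $k=1$ recursion combined with the endpoint bound gives the stage-1 estimate, with the one-step source absorbed into $M^{(2)}_{T_*}$.
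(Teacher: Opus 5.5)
Your proposal is correct and follows the paper's proof essentially step for step. It tests with $2\Delta^2\omega_{n,i}$ and uses the same product-rule/Agmon--Ladyzhenskaya bound $\|\nabla B(\omega)\|^2\le C(1+\|\Delta\omega\|^2)$ under the uniform $L^2$ and $H^1$ bounds. It then applies Poincar\'e with $\alpha_2=\nu\lambda_I/C_P$, the sliding-window source bound from \cref{cor:sliding-H2} with the window enlarged by $\tau_*$, and \cref{lem:discrete-gronwall}, followed by the $k=1$ recursion for the first stage. Your observation that no $\rho_*$-weighted modified energy is needed, since the nonlinear term only produces $\|\Delta\omega_{n,j}\|^2$ at the energy level, matches the paper's treatment as well.
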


\begin{proof}
Proposition~\ref{prop:stage-solvability} justifies the tests below.
Step 1. [Preliminary $H^2$ estimate]
Testing the equation \eqref{eqn:isdirk2-mr-ccsav-omega} with
$2\Delta^2\omega_{n,i}$ and integrating by parts gives
\[
\begin{aligned}
&\|\Delta\omega_{n,i}\|^2-\|\Delta\omega_{n,i-1}\|^2
+\|\Delta\delta_\tau\omega_{n,i}\|^2
+2\nu\tau_n\sum_{j=1}^{i}a_{i,j}
\langle\nabla\Delta\omega_{n,j},\nabla\Delta\omega_{n,i}\rangle\\
&=
-2\tau_n\sum_{j=0}^{i-1}\hat a_{i,j}
\langle\nabla f_{n,j},\nabla\Delta\omega_{n,i}\rangle 
+2\tau_nG_\omega(r_{n,i})
\sum_{j=0}^{i-1}\hat a_{i,j}
\langle B_{n,j},\Delta^2\omega_{n,i}\rangle .
\end{aligned}
\]
Summing over $i=1,\ldots,k$ and using
\cref{lem:stage-coefficient-bounds} for the implicit diffusion term yields
\begin{equation}\label{eq:h2-stage-raw}
    \resizebox{.9\textwidth}{!}{$
\begin{aligned}
&\|\Delta\omega_{n,k}\|^2-\|\Delta\omega^n\|^2
+\sum_{i=1}^{k}\|\Delta\delta_\tau\omega_{n,i}\|^2
+2\nu\lambda_I\tau_n\sum_{i=1}^{k}
\|\nabla\Delta\omega_{n,i}\|^2\\
&\le
2\tau_n\left|
\sum_{i=1}^{k}\sum_{j=0}^{i-1}\hat a_{i,j}
\langle\nabla f_{n,j},\nabla\Delta\omega_{n,i}\rangle\right| +2\tau_n\sum_{i=1}^{k}|G_\omega(r_{n,i})|
\left|
\sum_{j=0}^{i-1}\hat a_{i,j}
\langle B_{n,j},\Delta^2\omega_{n,i}\rangle
\right|.
\end{aligned}$}
\end{equation}

Step 2. [Estimate of the right-hand side]
The forcing term is controlled by the explicit coefficient bound and Young's
inequality:
\begin{equation}\label{eq:h2-force}
\resizebox{.9\textwidth}{!}{$
\begin{aligned}
&2\tau_n\left|
\sum_{i=1}^{k}\sum_{j=0}^{i-1}\hat a_{i,j}
\langle\nabla f_{n,j},\nabla\Delta\omega_{n,i}\rangle\right|
\le \frac{\nu\lambda_I}{2}\tau_n\sum_{i=1}^{k}\|\nabla\Delta\omega_{n,i}\|^2
+C\tau_n\sum_{j=0}^{k-1}\|\nabla f_{n,j}\|^2.
\end{aligned}$}
\end{equation}
For the nonlinear term, we have
\[
\langle B_{n,j},\Delta^2\omega_{n,i}\rangle
=
-\langle \nabla B_{n,j},\nabla\Delta\omega_{n,i}\rangle.
\]
Since
\[
B_{n,j}=B(\omega_{n,j})
=\boldsymbol u_{n,j}\cdot\nabla\omega_{n,j},
\qquad
\boldsymbol u_{n,j}
:=\nabla^\perp(-\Delta)^{-1}\omega_{n,j},
\]
the product rule, H\"older's inequality, elliptic regularity, the
Sobolev inequalities, and the uniform $L^2$ and $H^1$ bounds established in
\cref{lem:l2-uniform-bound,lem:h1-uniform-bound} yield
\[
\begin{aligned}
\|\nabla B_{n,j}\|
&\le C(\|\nabla \boldsymbol u_{n,j}\|_{L^4}\|\nabla\omega_{n,j}\|_{L^4} +\|\boldsymbol u_{n,j}\|_{L^\infty}\|\Delta\omega_{n,j}\|)\\
&\le C(\|\omega_{n,j}\|_{H^1}\|\nabla\omega_{n,j}\|^{1/2}\|\Delta\omega_{n,j}\|^{1/2} +\|\omega_{n,j}\|_{H^1}\|\Delta\omega_{n,j}\|)\\
&\le C(\|\Delta\omega_{n,j}\|^{1/2}+\|\Delta\omega_{n,j}\|).
\end{aligned}
\]
Hence, for any $\varepsilon>0$, we have
\begin{equation}\label{eq:h2-trilinear}
\begin{aligned}
\left|\langle B_{n,j},\Delta^2\omega_{n,i}\rangle\right|
\le
\|\nabla B_{n,j}\|\,\|\nabla\Delta\omega_{n,i}\| 
\le 
\varepsilon\|\nabla\Delta\omega_{n,i}\|^2
+C_\varepsilon\left(1+\|\Delta\omega_{n,j}\|^2\right).
\end{aligned}
\end{equation}
Since the $L^2$ estimate gives a uniform bound for $G_\omega(r_{n,i})$,
\cref{eq:h2-trilinear} implies
\begin{equation}\label{eq:h2-nonlinear-bound}
\begin{aligned}
&2\tau_n\sum_{i=1}^{k}|G_\omega(r_{n,i})|
\left|
\sum_{j=0}^{i-1}\hat a_{i,j}
\langle B_{n,j},\Delta^2\omega_{n,i}\rangle
\right|\\
\le &
\frac{\nu\lambda_I}{2}\tau_n\sum_{i=1}^{k}
\|\nabla\Delta\omega_{n,i}\|^2
+C\tau_n\sum_{j=0}^{k-1} \left(1+\|\Delta\omega_{n,j}\|^2\right).
\end{aligned}
\end{equation}

Step 3. [Revised $H^2$ recursion]
Combining \eqref{eq:h2-stage-raw}--\eqref{eq:h2-nonlinear-bound}, using the
boundedness of the $H^1$ forcing, and discarding nonnegative increment terms,
we obtain
\begin{equation}\label{eq:h2-preliminary-recursion}
\begin{aligned}
&\|\Delta\omega_{n,k}\|^2-\|\Delta\omega^n\|^2
+\nu\lambda_I\tau_n\sum_{i=1}^{k}\|\nabla\Delta\omega_{n,i}\|^2\\
&\le
C\tau_n\sum_{j=0}^{k-1}
\left(1+\|\Delta\omega_{n,j} \|^2 + C_f^{(1)}\right),
\qquad 1\le k\le2 .
\end{aligned}
\end{equation}
Taking $k=2$ gives
\begin{equation}\label{eq:h2-k2}
\begin{aligned}
&\|\Delta\omega^{n+1}\|^2-\|\Delta\omega^n\|^2
+\nu\lambda_I\tau_n\left(
\|\nabla\Delta\omega_{n,1}\|^2
+\|\nabla\Delta\omega^{n+1}\|^2\right)\\
&\le
C\tau_n\left(
1+\|\Delta\omega^n\|^2+\|\Delta\omega_{n,1}\|^2\right).
\end{aligned}
\end{equation}

Step 4. [Endpoint recursion and uniform-in-time bound]
By Poincar\'e's inequality, $\|\Delta\omega^{n+1}\|^2 \leq C_P\|\nabla\Delta\omega^{n+1}\|^2$.
Thus, setting $\alpha_2:=\frac{\nu\lambda_I}{C_P}$, we deduce from \eqref{eq:h2-k2} that
\begin{equation}\label{eq:h2-gronwall-form}
\begin{aligned}
(1+\alpha_2\tau_n)
\|\Delta\omega^{n+1}\|^2
\le
\|\Delta\omega^n\|^2
+C\tau_n\left(1+\|\Delta\omega^n\|^2+\|\Delta\omega_{n,1}\|^2 + C_f^{(1)}\right).
\end{aligned}
\end{equation}

The stage-$1$ contribution is controlled by \cref{cor:sliding-H2}. For the
endpoint contribution, use $\omega^n=\omega_{n-1,2}$ and
$\tau_n\le\rho_*\tau_{n-1}$ as above. The required stage window is enlarged
by at most $\tau_*$, and the case $m=0$ leaves the initial contribution
$\tau_*\|\Delta\omega^0\|^2$. Hence, for every fixed $T_*>\tau_*$,
\begin{equation}\label{eq:h2-source-window}
\sup_{m\ge0}\;\sup_{N>m:\;\sum_{n=m}^{N-1}\tau_n\le T_*}
\sum_{n=m}^{N-1}\tau_n
\left(1 + C_f^{(1)}+\|\Delta\omega^n\|^2+\|\Delta\omega_{n,1}\|^2\right)
\le M_{T_*}^{(2)}.
\end{equation}
where
\[
M_{T_*}^{(2)}:=(1+C_f^{(1)})T_*
+(1+\rho_*)C_{T_*+\tau_*}^{(2)}
+\tau_*\|\Delta\omega^0\|^2,
\]
and $C_{T_*+\tau_*}^{(2)}$ is the constant in
\cref{cor:sliding-H2} for the enlarged window.

Applying \cref{lem:discrete-gronwall} to \eqref{eq:h2-gronwall-form} yields
\begin{equation}\label{eq:h2-modified-uniform}
\|\Delta\omega^n\|^2
\le
\frac{1}{1+\alpha_2t_n}\|\Delta\omega^0\|^2 +
C\left( 1+\frac{1}{\alpha_2(T_*-\tau_*)} \right)M_{T_*}^{(2)}.
\end{equation}
Taking $k=1$ in \eqref{eq:h2-preliminary-recursion} and discarding the
nonnegative dissipative term, we obtain
\[
\|\Delta\omega_{n,1}\|^2
\le
\|\Delta\omega^n\|^2 + C\tau_n\left(1+\|\Delta\omega^n\|^2\right).
\]
Using \eqref{eq:h2-modified-uniform}, we absorb the resulting lower-order term into the sliding-window contribution and obtain the same type of bound for
$\omega_{n,1}$. This completes the proof of \eqref{eq:H2-final}.
\end{proof}

\section{Variable-step convergence analysis}\label{sec:convergence}

Let $T>0$ be fixed. We first derive two preparatory estimates for the
numerical stage values and the auxiliary variable, and then establish the
finite-time error estimate.  The uniform numerical $H^2$ estimate proved in
the preceding section is essential to the present argument: it yields an
$O(\tau_n)$ bound for the stage increments in $L^2$, which in turn makes the
forcing in the scalar auxiliary equation $O(\tau_n^2)$.  Consequently,
$r_{n,i}=O(\tau_*)$ and the factor $G_\omega(r_{n,i})=1-r_{n,i}^2$ perturbs
the nonlinear term only by $O(\tau_*^2)$.

\subsection{Preparatory estimates}

\begin{proposition}[Uniform stage increment bound]\label{prop:small-increments}
Assume that the conclusions of
\cref{lem:l2-uniform-bound,lem:h1-uniform-bound,lem:h2-uniform-bound}
hold.  In particular, suppose that the numerical stage values are uniformly
bounded in $H^2$, the forcing is uniformly bounded in $L^2$, and the scalar
correction factors $G_\omega(r_{n,i})$ are uniformly bounded.  Then there
exists a constant $C>0$, uniform over grids satisfying $\tau_*\le\bar{\tau}, \rho_*\le\bar{\rho}$, such that
\[
\|\omega_{n,i}-\omega_{n,j}\| \le C\tau_n,
\qquad n\ge0,\quad 0\le j<i\le2.
\]
\end{proposition}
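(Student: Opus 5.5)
The plan is to read the increment bound directly off the incremental stage equation \eqref{eqn:isdirk2-mr-ccsav-omega}. For $i=1,2$ it gives
\[
\omega_{n,i}-\omega_{n,i-1}
=\tau_n\Big(\nu\sum_{j=1}^{i}a_{i,j}\Delta\omega_{n,j}
+\sum_{j=0}^{i-1}\hat a_{i,j}f_{n,j}
-G_\omega(r_{n,i})\sum_{j=0}^{i-1}\hat a_{i,j}B_{n,j}\Big).
\]
Every term on the right already carries the factor $\tau_n$. It therefore suffices to show that the bracket is bounded in $L^2$ uniformly in $n$ and over the admissible meshes $\tau_*\le\bar\tau$, $\rho_*\le\bar\rho$. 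The case $j<i$ with $i-j=2$, i.e. $\omega_{n,2}-\omega_{n,0}$, then follows from the triangle inequality applied to the two neighboring increments, since it is the sum of the $i=1$ and $i=2$ increments. The bracket is handled term by term as follows.

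\textbf{Diffusion term.} Since the coefficients $a_{i,j}$ are fixed numbers, $\nu\|\sum_j a_{i,j}\Delta\omega_{n,j}\|\le C\nu\max_{1\le j\le2}\|\Delta\omega_{n,j}\|$. This is bounded by \cref{lem:h2-uniform-bound}, which controls $\|\Delta\omega_{n,i}\|$ uniformly: its right-hand side is at most $\|\Delta\omega^0\|^2$ plus a constant depending on $T_*$, $\bar\tau$, $\bar\rho$, and we fix, say, $T_*=2\bar\tau$.

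\textbf{Forcing term.} By hypothesis $\|f_{n,j}\|\le C_f^{1/2}$, so this contribution is bounded by $\sigma_E$-type constants times $C_f^{1/2}$.

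\textbf{Nonlinear term.} The factor $|G_\omega(r_{n,i})|=|1-r_{n,i}^2|$ is uniformly bounded, because \cref{lem:l2-uniform-bound} bounds $|1-r_{n,i}|$. For $B_{n,j}$ we use the two-dimensional estimate already employed in \cref{cor:uniform-stage-uniqueness}:
\[
\|B(\omega)\|\le\|\nabla^\perp(-\Delta)^{-1}\omega\|_{L^\infty}\|\nabla\omega\|\le C\|\omega\|_{H^1}^2 .
\]
Alternatively, \eqref{eqn:h1_b_est} combined with the $H^1$ and $H^2$ bounds gives the same conclusion. Either way, $\|B_{n,j}\|\le C$ uniformly, with constants coming from \cref{lem:h1-uniform-bound}.

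\textbf{Collecting the bounds.} Together these give $\|\omega_{n,i}-\omega_{n,i-1}\|\le C\tau_n$ with $C$ depending only on $\nu$, the Butcher coefficients, $C_f$, the initial data norms and the mesh caps.

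The main point requiring care is that the constants in \cref{lem:h1-uniform-bound,lem:h2-uniform-bound} depend on $\tau_*$ and $\rho_*$ only through upper bounds. I would check that $M_{T_*}$ and $M^{(2)}_{T_*}$ are monotone in $\tau_*$ and $\rho_*$, so that they may be replaced by their values at $\bar\tau$, $\bar\rho$. The window requirement $T_*>\tau_*$ is met uniformly by the choice $T_*=2\bar\tau$. Beyond this uniformity bookkeeping, the argument is a direct bound with no Gronwall step.
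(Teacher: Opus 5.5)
Your proposal is correct and follows the paper's proof essentially step for step. The paper also bounds the three terms of the incremental stage equation (the diffusion term by the uniform $H^2$ bound, the forcing term by the $L^2$ forcing bound, and the advection term by $\|B_{n,j}\|\le C\|\omega_{n,j}\|_{H^1}^2$ together with bounded $G_\omega(r_{n,i})$), and then telescopes to handle $\omega_{n,2}-\omega_{n,0}$; your choice $T_*=2\bar\tau$ and the monotonicity check on the constants is bookkeeping that the paper leaves implicit.
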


\begin{proof}
Step 1. [Increment equation]
The incremental form of the vorticity equation \eqref{eqn:mr_sav_stage} gives, for $i=1,2$,
\[
\omega_{n,i}-\omega_{n,i-1}
=
\nu\tau_n\sum_{j=1}^{i}a_{i,j}\Delta\omega_{n,j}
+\tau_n\sum_{j=0}^{i-1}\hat a_{i,j}
\left(f_{n,j}-G_\omega(r_{n,i})B_{n,j}\right).
\]
We estimate the three terms on the right separately.

Step 2. [Uniform bounds for the right-hand side]
Since the RK coefficients are fixed, the uniform $H^2$ bound in
\cref{lem:h2-uniform-bound} yields
\[
\Big\| \nu\tau_n\sum_{j=1}^{i}a_{i,j}\Delta\omega_{n,j} \Big\|
\leq 
 \nu\tau_n\sum_{j=1}^{i}|a_{i,j}|\left\|\Delta\omega_{n,j} \right\|
\le C\tau_n.
\]
The forcing assumption in \cref{lem:l2-uniform-bound} gives
\[
\Big\| \tau_n\sum_{j=0}^{i-1}\hat a_{i,j}f_{n,j} \Big\|
\le
\tau_n\sum_{j=0}^{i-1}|\hat a_{i,j}|\,\|f_{n,j}\|
\le C\tau_n.
\]
It remains to bound the explicit nonlinear term. In two dimensions, elliptic
regularity and the embedding $H^2(\Omega)\hookrightarrow L^\infty(\Omega)$ give $\|B_{n,j}\| \le C\|\omega_{n,j}\|_{H^1}^2
\le C$.
Moreover, the uniform $L^2$ estimate bounds
$G_\omega(r_{n,i})=1-r_{n,i}^2$ uniformly. Therefore
\[
\Big\|
\tau_nG_\omega(r_{n,i})
\sum_{j=0}^{i-1}\hat a_{i,j}B_{n,j} \Big\|
\le
C\tau_n\sum_{j=0}^{i-1}|\hat a_{i,j}|\,\|B_{n,j}\|
\le C\tau_n.
\]
Combining these three estimates yields
\[
\|\omega_{n,i}-\omega_{n,i-1}\|\le C\tau_n,\qquad i=1,2.
\]

Step 3. [Non-neighboring stage differences]
For $0\le j<i\le2$, the difference telescopes as
\[
\omega_{n,i}-\omega_{n,j}
=\sum_{q=j+1}^{i}\left(\omega_{n,q}-\omega_{n,q-1}\right).
\]
Since there are only two stages, the neighboring increment bound gives
\[
\|\omega_{n,i}-\omega_{n,j}\|\le C\tau_n.
\]
This proves the proposition.
\end{proof}

\begin{remark}\label{rem:h2-role-convergence}
The higher initial regularity implicit in the exact-solution assumptions of
\cref{thm:variable-step-conv} is not used directly in
\cref{prop:small-increments}.  Together with the assumed forcing regularity,
it ensures that the numerical $H^2$ estimate of
\cref{lem:h2-uniform-bound} is available.  The proof of
\cref{prop:small-increments} itself uses precisely the pointwise bound
$\sup_{n,i}\|\Delta\omega_{n,i}\|<\infty$ to control the diffusive part of
the stage increment in $L^2$.
\end{remark}

\begin{proposition}[Uniform-in-time smallness of the auxiliary variable]
\label{prop:r-small}
Assume the hypotheses of
\cref{lem:l2-uniform-bound,lem:h1-uniform-bound,lem:h2-uniform-bound}.
If $r^0=0$ and $\gamma>0$, then there exists $C>0$, uniform over grids satisfying $\tau_*\le\bar{\tau}, \rho_*\le\bar{\rho}$, such that 
\[
|r_{n,i}| \le C\tau_*, \qquad n\ge0,\quad i=1,2.
\]
 If $\gamma=0$, then for every fixed
$T>0$ the same estimate holds for $t_{n+1}\le T$, with a constant that may
depend on $T$.
\end{proposition}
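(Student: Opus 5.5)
The plan is to treat the scalar stage equation \eqref{eqn:isdirk2-mr-ccsav-r} as the SDIRK2 discretization of the linear damped ODE $r'=-\gamma r$, driven by a small source. I will show that this source is $O(\tau_n^2)$ per stage and then close with the same stagewise energy argument used for \cref{lem:l2-uniform-bound}, this time centred at $0$ rather than at $1$.

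\textbf{Step 1 (the source is $O(\tau_n)$).} Use the skew-symmetry $\langle B(\omega_{n,j}),\omega_{n,j}\rangle=0$ to rewrite, for $0\le j<i$,
\[
\langle B_{n,j},\omega_{n,i}\rangle=\langle B_{n,j},\omega_{n,i}-\omega_{n,j}\rangle .
\]
In two dimensions the uniform $H^1$--$H^2$ bounds give $\|B_{n,j}\|\le C\|\omega_{n,j}\|_{H^1}^2\le C$. \cref{prop:small-increments} gives $\|\omega_{n,i}-\omega_{n,j}\|\le C\tau_n$. Hence $|\langle B_{n,j},\omega_{n,i}\rangle|\le C\tau_n$. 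Moreover, $|G_r(r_{n,i})|=|1+r_{n,i}|$ is uniformly bounded by the $L^2$ estimate \eqref{eq:L2-stage-bound}, since $|1-r_{n,i}|$ is bounded there. Therefore the stage equation reads
\[
r_{n,i}=r_{n,i-1}-\gamma\tau_n\sum_{j=1}^{i}a_{i,j}r_{n,j}+\tau_n g_{n,i},\qquad |g_{n,i}|\le C\tau_n ,
\]
with $C$ uniform over grids with $\tau_*\le\bar\tau$ and $\rho_*\le\bar\rho$.

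\textbf{Step 2 (energy estimate, $\gamma>0$).} Multiply the stage equation by $2r_{n,i}$ and sum over $i=1,\dots,k$. Apply the coercivity \eqref{eq:implicit-stage-coercivity} of \cref{lem:stage-coefficient-bounds} and discard the increment terms. This gives
\[
|r_{n,k}|^2-|r^n|^2+2\gamma\lambda_I\tau_n\sum_{i=1}^{k}|r_{n,i}|^2\le 2C\tau_n^2\sum_{i=1}^k|r_{n,i}| .
\]
By Young's inequality the right-hand side is at most $\gamma\lambda_I\tau_n\sum_i|r_{n,i}|^2+C\tau_n^3/(\gamma\lambda_I)$. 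Using $\tau_n^3\le\tau_*^2\tau_n$, we arrive at
\[
(1+\gamma\lambda_I\tau_n)|r_{n,k}|^2\le|r^n|^2+C\tau_*^2\tau_n ,
\]
where we have kept only the $i=k$ term of the remaining dissipation. \cref{lem:damped-discrete-gronwall} with $\alpha=\gamma\lambda_I$ and $r^0=0$ then yields $|r_{n,k}|^2\le C\tau_*^2/(\gamma\lambda_I)$, which is the claim.

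\textbf{Step 3 ($\gamma=0$).} The same identity gives $|r_{n,k}|^2\le|r^n|^2+C\tau_n^2\sum_i|r_{n,i}|$. Set $m_n:=\max_{\ell\le n,\,i}|r_{\ell,i}|$. Iterating yields $m_n^2\le C m_n\sum_{\ell\le n}\tau_\ell^2\le CTm_n\tau_*$, hence $m_n\le CT\tau_*$ for $t_{n+1}\le T$.

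The only delicate point is Step 1. The smallness comes entirely from the cancellation $\langle B_{n,j},\omega_{n,j}\rangle=0$ combined with the increment bound, and that increment bound in turn rests on the uniform $H^2$ estimate. The rest of the argument is routine.
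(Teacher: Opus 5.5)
Your proof is correct. Your Step 1 coincides with the paper's Step 2: skew-symmetry together with \cref{prop:small-increments} makes the scalar source $O(\tau_n^2)$. From there the two arguments close differently.

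\emph{The paper's route.} The paper does not use an energy argument for $r$. It eliminates $r_{n,1}$ to obtain the exact endpoint recursion $(1+\gamma\eta\tau_n)r^{n+1}=R_\gamma(\tau_n)r^n+\widetilde F_n$. The amplification factor $R_\gamma(\tau)/(1+\gamma\eta\tau)=\bigl(1-\gamma(1-2\eta)\tau\bigr)/(1+\gamma\eta\tau)^2$ is the SDIRK2 stability function evaluated at $z=-\gamma\tau$. The paper then uses two explicit facts:
\begin{itemize}
\item the bound $0\le |R_\gamma(\tau)|/(1+\gamma\eta\tau)<1$;
\item the comparison $\tau/(1+\gamma\eta\tau)\le C\bigl(1-|R_\gamma(\tau)|/(1+\gamma\eta\tau)\bigr)$.
\end{itemize}
With these it telescopes the accumulated $O(\tau_p^2)$ defects into $C\tau_*$, and recovers $r_{n,1}$ afterwards from the first-stage relation. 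This gives a first-power bound directly. The case $\gamma=0$ is then trivial, since the factor becomes one. The cost is that it relies on tableau-specific calculations.

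\emph{Your route.} You reuse the positivity of $S(E^{-1}A^{\rm RK})$, via the constant $\lambda_I$ of \cref{lem:stage-coefficient-bounds}, and the stagewise damped Gronwall lemma \cref{lem:damped-discrete-gronwall}. This is exactly the mechanism of \cref{lem:l2-uniform-bound}, now centred at $0$ instead of $1$. It has three advantages:
\begin{itemize}
\item it controls both stages simultaneously;
\item it needs no stability-function computation;
\item it carries over unchanged to any IMEX-RK scheme with positive definite $S(E^{-1}A^{\rm RK})$, which is the extension anticipated in the paper's conclusion.
\end{itemize}
The price is small. The Young step produces a quadratic estimate with $\lambda_I\approx0.0858$ in the denominator, though the result is still $O(\tau_*/\gamma)$ times the source constant. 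For $\gamma=0$ you need the absorption argument with $m_n$ rather than direct linear iteration. Neither affects the conclusion.
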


\begin{proof}
Step 1. [Scalar increment equation]
The scalar incremental equation has the form
\[
r_{n,i}-r_{n,i-1}
=-\gamma\tau_n\sum_{j=1}^{i}a_{i,j}r_{n,j}
-\tau_nG_r(r_{n,i})
\sum_{j=0}^{i-1}\hat a_{i,j}
\langle B_{n,j},\omega_{n,i}\rangle .
\]
Equivalently,
\begin{equation}\label{eq:r-stage-increment}
r_{n,i}+\gamma\tau_n\sum_{j=1}^{i}a_{i,j}r_{n,j}
=r_{n,i-1}
-\tau_nG_r(r_{n,i})
\sum_{j=0}^{i-1}\hat a_{i,j}
\langle B_{n,j},\omega_{n,i}\rangle .
\end{equation}

Step 2. [Size of the nonlinear scalar forcing]
By skew-symmetry,
\[
\langle B_{n,j},\omega_{n,i}\rangle
=
\langle B_{n,j},\omega_{n,i}-\omega_{n,j}\rangle.
\]
By \cref{prop:small-increments} and the uniform bound on
$B_{n,j}$ obtained in its proof,
\[
\left|\langle B_{n,j},\omega_{n,i}\rangle\right|
\le
\|B_{n,j}\|\,\|\omega_{n,i}-\omega_{n,j}\|
\le C\tau_n .
\]
The uniform $L^2$ bound also controls the auxiliary variable scalar, hence
$G_r(r_{n,i})=1+r_{n,i}$ is uniformly bounded. Therefore
\begin{equation}\label{eq:r-forcing-small}
\Big|
\tau_nG_r(r_{n,i})
\sum_{j=0}^{i-1}\hat a_{i,j}
\langle B_{n,j},\omega_{n,i}\rangle
\Big|
\le C\tau_n^2 .
\end{equation}

Step 3. [Exact endpoint recursion]
Let
\[
F_{n,i}:=
-\tau_nG_r(r_{n,i})
\sum_{j=0}^{i-1}\hat a_{i,j}
\langle B_{n,j},\omega_{n,i}\rangle. \]
By \eqref{eq:r-forcing-small}, $|F_{n,i}|\le C\tau_n^2$. Since
$a_{1,1}=a_{2,2}=\eta$ and
$a_{2,1}=1-2\eta$, the two scalar stages satisfy
\[
(1+\gamma\eta\tau_n)r_{n,1}=r^n+F_{n,1},
\]
and
\[
(1+\gamma\eta\tau_n)r^{n+1}
=\left(1-\gamma(1-2\eta)\tau_n\right)r_{n,1}+F_{n,2}.
\]
Eliminating $r_{n,1}$ gives
\begin{equation}\label{eq:r-endpoint-damped}
(1+\gamma\eta\tau_n)r^{n+1}
=R_\gamma(\tau_n)r^n+\widetilde F_n,
\end{equation}
where
\begin{equation}
R_\gamma(\tau)
:=\frac{1-\gamma(1-2\eta)\tau}{1+\gamma\eta\tau},\quad
\widetilde F_n
= R_\gamma(\tau_n)F_{n,1} + F_{n,2}.
\end{equation}
For the SDIRK2 value $\eta=1-\sqrt2/2$ and fixed nonnegative $\gamma$, one has $|R_\gamma(\tau)|<\sqrt{2}$ for all $\gamma\tau>0$. It follows directly that $|\widetilde F_n|\le C\tau_n^2$.

Consequently,
\begin{equation}\label{eq:r-one-step-damped}
(1+\gamma\eta\tau_n)|r^{n+1}| \le |R_\gamma(\tau_n)||r^n|+C\tau_n^2 .
\end{equation}

Step 4. [Iteration]
Using $\tau_n^2\le\tau_*\tau_n$, \eqref{eq:r-one-step-damped} gives
\begin{equation*}
|r^{n+1}|
\le
\left(\prod_{\ell=0}^{n}
\frac{|R_\gamma(\tau_\ell)|}{1+\gamma\eta\tau_\ell}
\right)|r^0|
+ C\tau_* \sum_{p=0}^{n}
\frac{\tau_p}{1+\gamma\eta\tau_p}
\prod_{\ell=p+1}^{n} \frac{|R_\gamma(\tau_\ell)|}{1+\gamma\eta\tau_\ell}.
\end{equation*}
Since $r^0=0$, it remains to estimate the accumulated forcing term.
For $\gamma>0$ and $\eta=1-\sqrt{2}/2$, a direct calculation gives
\[
0\le
\frac{|R_\gamma(\tau)|}{1+\gamma\eta\tau}
<1,
\qquad
\frac{\tau}{1+\gamma\eta\tau}
\le
C\left(
1-\frac{|R_\gamma(\tau)|}{1+\gamma\eta\tau}
\right),
\qquad \tau>0.
\]
Therefore,
\begin{equation}\label{eqn:r_estimate}
\begin{aligned}
|r^{n+1}|
&\le
C\tau_*
\sum_{p=0}^{n}
\frac{\tau_p}{1+\gamma\eta\tau_p}
\prod_{\ell=p+1}^{n}
\frac{|R_\gamma(\tau_\ell)|}
{1+\gamma\eta\tau_\ell}\\
&\le
C\tau_*
\sum_{p=0}^{n}
\left(
1-\frac{|R_\gamma(\tau_p)|}
{1+\gamma\eta\tau_p}
\right)
\prod_{\ell=p+1}^{n}
\frac{|R_\gamma(\tau_\ell)|}
{1+\gamma\eta\tau_\ell}\\
&=
C\tau_*
\left(
1-
\prod_{\ell=0}^{n}
\frac{|R_\gamma(\tau_\ell)|}
{1+\gamma\eta\tau_\ell}
\right)
\le C\tau_*.
\end{aligned}
\end{equation}
Hence $|r^n|\le C\tau_*$ for all $n\ge0$. 
The first-stage relation and $|F_{n,1}|\le C\tau_n^2$ then give the same
bound for $r_{n,1}$.  When $\gamma=0$, direct iteration instead gives
\[
 |r^n|\le C\sum_{p=0}^{n-1}\tau_p^2
 \le C\tau_*t_n,
\]
and the first-stage relation gives the corresponding estimate for
$r_{n,1}$.  This completes the proof.
\end{proof}

\begin{remark}[Role of damping]\label{rem:eff_damp}
According to \cref{prop:r-small}, the auxiliary variable $r^n$ remains
uniformly $O(\tau_*)$ in time when $\gamma>0$. In contrast, when
$\gamma=0$, the damping factor in \eqref{eqn:r_estimate} reduces to one,
so that the $O(\tau_n^2)$ perturbations generated at successive time
steps accumulate without decay. Consequently,
\[
|r^n|
\le C\sum_{p=0}^{n-1}\tau_p^2
\le C\tau_*t_n.
\]
This permits linear growth in time. This behavior will be illustrated by the
numerical experiment in \cref{exm:gamma-effect}.

Moreover, since $G_\omega(r)=1-r^2$, we have $G_\omega(r_{n,i})=1+O(\tau_*^2)$
uniformly in time. Therefore, the correction factor introduces only an
$O(\tau_*^2)$ perturbation into the nonlinear term. Its accumulated
effect is $O(\tau_*^2)$ on any finite time interval and therefore does not
affect the second-order accuracy of the scheme.
\end{remark}

\subsection{Optimal second-order temporal error estimate}

\begin{theorem}[Optimal second-order variable-step convergence]
\label{thm:variable-step-conv}
Let $T>0$ be fixed.  Assume that
$f\in L^\infty(0,T;V)\cap W^{2,\infty}(0,T;H)$ and that the exact
 solution starting from $\omega(0)=\omega^0$ satisfies
\[
\omega\in W^{3,\infty}(0,T;H)
\cap W^{2,\infty}(0,T;\dot H_{\rm per}^2)
\cap W^{1,\infty}(0,T;\dot H_{\rm per}^4).
\]
Suppose moreover that
$r^0=0$ and that the family of time grids satisfies
$\tau_*\le\bar\tau$ and $\rho_*\le\bar\rho$ for arbitrary but fixed finite constants
$\bar\tau$ and $\bar\rho$.  Then there exists a constant $C>0$ such that
\[
\|\omega^{n}-\omega(t_n)\| \le Ct_{n}^{\frac{1}{2}}e^{\frac{Ct_{n}}{2}}\tau_*^2,
\qquad 0<t_n\le T.
\]
The constant $C$ is independent of $n$, $\tau_*$, and the particular
admissible time grid, but may depend on $T$, $\Omega$, $\nu$, $\gamma$,
$\bar\tau$, $\bar\rho$, the SDIRK2 coefficients, the forcing norms
$\|f\|_{L^\infty(0,T;V)}$ and
$\|f\|_{W^{2,\infty}(0,T;H)}$, and the three displayed solution norms.
\end{theorem}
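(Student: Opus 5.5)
The plan is an energy-type error analysis in the neighboring-stage form, using the reference stage functions of \cite{zhang2004error} to recover second order despite the first-order stage order of SDIRK2. First, I would define reference stage values $W_{n,0}=\omega(t_n)$ and $W_{n,1},W_{n,2}$ by the same incremental IMEX scheme applied to the exact equation with $G_\omega\equiv1$ and $r\equiv0$. The explicit data $B(W_{n,j})$ and $f_{n,j}$ would be used, but $W_{n,1}$ is obtained by a linear elliptic solve from $\omega(t_n)$. Equivalently, $W_{n,1}:=\omega(t_n+\eta\tau_n)+\tau_n^2 q_n$ with an explicitly computable correction $q_n$. Taylor expansion with the regularity $W^{3,\infty}(0,T;H)\cap W^{2,\infty}(0,T;\dot H^2)\cap W^{1,\infty}(0,T;\dot H^4)$ then gives two facts. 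The stage defects satisfy $\|W_{n,1}-\omega(t_n+\eta\tau_n)\|_{H^2}\le C\tau_n^2$. The endpoint defect satisfies $\|W_{n,2}-\omega(t_{n+1})\|\le C\tau_n^3$, because order conditions for the pair (stiff accuracy, $b^T c=\hat b^T\hat c=1/2$) cancel the second-order terms. The $\dot H^4$ regularity is what makes $\Delta$ applied to the $O(\tau_n^2)$ stage correction still bounded.

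Second, I would set the errors $e_{n,i}:=\omega_{n,i}-W_{n,i}$ with $e_{n,0}=e^n:=\omega^n-\omega(t_n)$. Subtracting the scheme from the reference recursion gives an incremental error equation
\[
e_{n,i}-e_{n,i-1}=\nu\tau_n\sum_{j=1}^{i}a_{i,j}\Delta e_{n,j}-\tau_n\sum_{j=0}^{i-1}\hat a_{i,j}\bigl(B_{n,j}-B(W_{n,j})\bigr)+\tau_n r_{n,i}^2\sum_{j=0}^{i-1}\hat a_{i,j}B_{n,j}+\mathcal R_{n,i},
\]
where $\mathcal R_{n,2}$ collects the endpoint defect. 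I would test this with $2e_{n,i}$ and sum over $i$. \cref{lem:stage-coefficient-bounds} then gives the dissipation $2\nu\lambda_I\tau_n\sum_i\|\nabla e_{n,i}\|^2$ exactly as in the proof of \cref{lem:l2-uniform-bound}. The nonlinear difference splits as $B(e,\omega_{n,j})+B(W_{n,j},e)$ with $e=e_{n,j}$. Using the uniform $H^2$ bound of \cref{lem:h2-uniform-bound} on $\omega_{n,j}$ and the bound on $W_{n,j}$, this is controlled by $C\|e_{n,j}\|(\|e_{n,i}\|+\|\nabla e_{n,i}\|)$. Young's inequality then gives $\frac{\nu\lambda_I}{2}\tau_n\|\nabla e_{n,i}\|^2+C\tau_n\sum_j\|e_{n,j}\|^2$. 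The concurrent-correction term is bounded by \cref{prop:r-small}, since $|r_{n,i}|\le C\tau_*$ and $\|B_{n,j}\|\le C$. Its contribution is therefore $C\tau_n\tau_*^4+\tau_n\|e_{n,i}\|^2$.

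Third, I would deal with the defect. This is the main obstacle: the stage-1 residual is only $O(\tau_n^2)$ locally, so a naive estimate yields order one and a half. My first attempt is to make $\mathcal R_{n,1}=0$ by the choice of reference stage above, so that only the endpoint defect $\|\mathcal R_{n,2}\|\le C\tau_n^3$ remains. That term would be bounded by $2\|\mathcal R_{n,2}\|\|e_{n,2}\|\le \tau_n\|e_{n,2}\|^2+C\tau_n^{-1}\|\mathcal R_{n,2}\|^2\le\tau_n\|e^{n+1}\|^2+C\tau_n^5$. If the reference stage $W_{n,1}$ and $\omega(t_n+\eta\tau_n)$ then differ in the explicit argument of $B$, that difference is $O(\tau_n^2)$ in $H^1$ and only enters as a multiplier of $\tau_n$, giving $O(\tau_n^3)$ per step. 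Stage errors $e_{n,1}$ would be controlled by the $k=1$ stage inequality in terms of $\|e^n\|^2$.

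Finally, collecting terms yields a recursion $(1-C\tau_n)\|e^{n+1}\|^2\le(1+C\tau_n)\|e^n\|^2+C\tau_n\tau_*^4$, with $e^0=0$. To handle $\tau_n$ not small, the left factor can be absorbed using $\tau_*\le\bar\tau$ only after possibly shrinking the constants; otherwise one treats the finitely many large-step cases by the uniform stability bounds. A discrete Gronwall argument then gives $\|e^n\|^2\le Ct_ne^{Ct_n}\tau_*^4$, and taking square roots gives the stated bound $Ct_n^{1/2}e^{Ct_n/2}\tau_*^2$.
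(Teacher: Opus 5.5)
Your proposal follows essentially the same route as the paper. It uses the Zhang--Shu reference stage $\widetilde\omega_{n,1}$, defined by one exact IMEX stage from $\omega(t_n)$ so that there is no stage-1 residual, together with an $O(\tau_n^3)$ endpoint residual from Taylor expansion using $\dot H^4_{\rm per}$ regularity. The energy estimate relies on $\lambda_I$ and $\sigma_E$, with $|G_\omega(r_{n,i})-1|\le C\tau_*^2$ from \cref{prop:r-small}, and closes with discrete Gronwall.

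There are two minor technical differences. The paper bounds the nonlinear difference by an $H^{-1}$ Lipschitz estimate and absorbs every $\|e_{n,2}\|$ term into the dissipation via Poincar\'e. It therefore obtains $(1+C\tau_n)$ directly, with no $(1-C\tau_n)$ factor and no large-step patch; your $\bar\tau$-dependent Young weights also work. Separately, take $e_{n,2}:=\omega_{n,2}-\omega(t_{n+1})$ consistently rather than $\omega_{n,2}-W_{n,2}$, so that the endpoint residual $\mathcal R_{n,2}$ appears in the error equation you test.
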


\begin{remark}[Sufficient regularity hypotheses]\label{rem:conv-regularity}
The assumptions in \cref{thm:variable-step-conv} are sufficient rather than
minimal.  Only estimates on $[0,T]$ are used.
\ignore{: the proofs of the auxiliary stage and higher-norm bounds may be stopped at $T$, with constants depending
on the corresponding finite-time forcing norms.  One convenient, deliberately
nonminimal set of sufficient data conditions is
\[
\omega^0\in\dot H_{\rm per}^6(\Omega),\qquad
f\in C([0,T];\dot H_{\rm per}^4)
\cap C^1([0,T];\dot H_{\rm per}^2)
\cap C^2([0,T];H).
\]
}
This kind of regularity can be derived with standard two-dimensional periodic NSE regularity, or equivalently repeated
time differentiation followed by parabolic regularity.  See, for example,
\cite[Chapters~4 and~6]{temam1995navier} and the higher periodic regularity
results in \cite{FoiasTemam1989}.  
\end{remark}

\begin{proof}
We organize the argument into five steps.  The first two construct a comparison
stage and verify its endpoint consistency; the remaining steps derive and
iterate the error-energy inequality.

\smallskip
\noindent\emph{Step 1: Construction of the reference stage.}
Since $r(0)=0$, the continuous auxiliary variable vanishes identically, and
the exact vorticity satisfies $\partial_t\omega-\nu\Delta\omega=f-B(\omega)$.
Following the reference-function/comparison-stage strategy used by Zhang and Shu \cite{zhang2004error} and
adapted to IMEX time marching in \cite{wang2015stability}
(see also \cite{hairer1993solving} for the underlying RK order calculus), we
introduce the reference-stage value
$\widetilde\omega_{n,1}$ satisfying
\begin{equation}\label{eq:conv-modified-stage}
\left(I-\nu a_{1,1}\tau_n\Delta\right)
\widetilde\omega_{n,1}
=\omega(t_n)+\hat a_{1,0}\tau_n
\bigl(f_{n,0}-B(\omega(t_n))\bigr).
\end{equation}
This comparison value isolates the lower accuracy of the first internal stage
from the endpoint consistency error. 
To make this mechanism explicit, set
\[
h:=\tau_n,\qquad \mathcal L:=\nu\Delta,\qquad
Q(t,v):=f(t)-B(v),\qquad q(t):=Q(t,\omega(t)),
\]
and, only within steps 1--2, write
\[
w_j:=\partial_t^j\omega(t_n),\qquad
q_j:=\partial_t^jq(t_n),\qquad j=0,1,2.
\]
In particular, $w_0=\omega(t_n)$ and $q_0=q(t_n)$. The equation gives
\[
w_1=\mathcal Lw_0+q_0,
\qquad
w_2=\mathcal Lw_1+q_1.
\]
Since the bilinear advection term 
$$
\mathcal B(u,v):=
\nabla^\perp(-\Delta)^{-1}u\cdot\nabla v
$$
is continuous from $H^2\times H^2$ to $H$ and
$B(v)=\mathcal B(v,v)$, the stated assumptions imply
$q\in W^{2,\infty}(0,T;H)$.

\smallskip
\noindent\emph{Step 2: Reference-stage accuracy and endpoint local
truncation error.}
In this step, $O_X(h^m)$ denotes a remainder bounded in the norm of $X$ by
$Ch^m$, uniformly for $t_n+h\le T$ and over the admissible family of time
grids. Using \eqref{eq:conv-modified-stage}, $a_{1,1}=\hat a_{1,0}=\eta$,
and the resolvent identity, we obtain
\begin{equation}\label{eq:conv-reference-expansion}
\begin{aligned}
\widetilde\omega_{n,1}
&=w_0+\eta h(I-\eta h\mathcal L)^{-1}w_1\\
&=w_0+\eta hw_1+\eta^2h^2\mathcal Lw_1+O_H(h^3).
\end{aligned}
\end{equation}
Indeed, the remainder is
\[
\eta^3h^3\mathcal L^2(I-\eta h\mathcal L)^{-1}w_1,
\]
which is bounded in $H$ because
$w_1\in\dot H_{\rm per}^4=D(\mathcal L^2)$ and the resolvent is uniformly
bounded. The same assumptions and Taylor's formula in $H^2$ give
\[
\widetilde\omega_{n,1}-\omega(t_n+\eta h)=O_{H^2}(h^2).
\]
The local Lipschitz continuity of $B:H^2\to H$ and the temporal Taylor
expansion of $q$ therefore yield
\begin{equation}\label{eq:conv-explicit-expansion}
Q(t_n+\eta h,\widetilde\omega_{n,1})
=q_0+\eta hq_1+O_H(h^2).
\end{equation}

We next expand at the exact endpoint. Because $\mathcal L$ is unbounded on
$H$, we justify the expansion by applying Taylor's formula separately to
$\omega(t_n+h)$ in $H$ and to $\mathcal L\omega(t_n+h)$ in $H$.
The assumptions on $\omega$
give
\[
\omega(t_n+h)=w_0+hw_1+\frac{h^2}{2}w_2+O_H(h^3),
\]
\[
\mathcal L\omega(t_n+h)
=\mathcal Lw_0+h\mathcal Lw_1+O_H(h^2).
\]
Consequently,
\begin{equation}\label{eq:conv-exact-endpoint-expansion}
\begin{aligned}
(I-\eta h\mathcal L)\omega(t_n+h)
={}&w_0+h\bigl((1-\eta)\mathcal Lw_0+q_0\bigr)\\
&+h^2\left[
\left(\frac12-\eta\right)\mathcal Lw_1
+\frac12q_1\right]+O_H(h^3).
\end{aligned}
\end{equation}
On the other hand, \eqref{eq:conv-reference-expansion} and
\eqref{eq:conv-explicit-expansion} show that
\begin{align}
&\widetilde\omega_{n,1}
+h(1-2\eta)\mathcal L\widetilde\omega_{n,1}
+h(\delta-\eta)q_0
+h(1-\delta)Q(t_n+\eta h,\widetilde\omega_{n,1})
\notag\\
&\quad=w_0+h\bigl((1-\eta)\mathcal Lw_0+q_0\bigr)
+h^2\left[
\eta(1-\eta)\mathcal Lw_1
+\eta(1-\delta)q_1\right]+O_H(h^3).
\label{eq:conv-comparison-expansion}
\end{align}
For the Alexander coefficients,
$
\eta(1-\eta)=\frac12-\eta,
\eta(1-\delta)=\frac12
$,
 \eqref{eq:conv-exact-endpoint-expansion} and
\eqref{eq:conv-comparison-expansion} agree through order $h^2$.
Restoring the original notation gives
\begin{equation}\label{eq:conv-consistency-endpoint}
\begin{aligned}
\left(I-\nu a_{2,2}\tau_n\Delta\right)\omega(t_{n+1})
={}&\widetilde\omega_{n,1}
+\nu\tau_na_{2,1}\Delta\widetilde\omega_{n,1}
+\tau_n\hat a_{2,0}
\bigl(f_{n,0}-B(\omega(t_n))\bigr)\\
&+\tau_n\hat a_{2,1}
\bigl(f_{n,1}-B(\widetilde\omega_{n,1})\bigr)
+\mathcal R_n,
\end{aligned}
\end{equation}
where the comparison-stage local truncation residual satisfies
(cf. the related residual constructions in
\cite{zhang2004error,wang2015stability})
\begin{equation}\label{eq:conv-local-defects}
\|\mathcal R_n\|\le C\tau_n^3.
\end{equation}
Thus the comparison stage has an $O_{H^2}(h^2)$ internal-stage defect,
consistent with stage order one. Nevertheless, its use in the second-stage
equation produces the $O_H(h^3)$ local endpoint defect required for global
second-order convergence.

\smallskip
\noindent\emph{Step 3: Error equations and nonlinear perturbation bounds.}
Define
\[
e^n:=\omega^n-\omega(t_n),
\qquad e_{n,0}:=e^n,
\]
\[
e_{n,1}:=\omega_{n,1}-\widetilde\omega_{n,1},
\qquad
e_{n,2}:=\omega_{n,2}-\omega(t_{n+1}),
\qquad
e^{n+1}:=e_{n,2},
\]
and set
\[
B_{n,0}^*:=B(\omega(t_n)),
\qquad
B_{n,1}^*:=B(\widetilde\omega_{n,1}).
\]
Subtracting \eqref{eq:conv-modified-stage} and
\eqref{eq:conv-consistency-endpoint} from the two numerical stage equations
\eqref{eqn:mr_sav_stage}, respectively, gives
\begin{subequations}\label{eq:conv-error-stages}
\begin{align}
e_{n,1}-e_{n,0}
={}&\nu\tau_na_{1,1}\Delta e_{n,1}
-\tau_n\hat a_{1,0}
\bigl(G_\omega(r_{n,1})B_{n,0}-B_{n,0}^*\bigr),
\label{eq:conv-error-stage1}\\
e_{n,2}-e_{n,1}
={}&\nu\tau_n\sum_{j=1}^2a_{2,j}\Delta e_{n,j}
-\tau_n\sum_{j=0}^1\hat a_{2,j}
\bigl(G_\omega(r_{n,2})B_{n,j}-B_{n,j}^*\bigr)
-\mathcal R_n.
\label{eq:conv-error-stage2}
\end{align}
\end{subequations}

The uniform $H^1$ bounds for the numerical and exact solutions, together
with the $O_{H^2}(\tau_n^2)$ reference-stage estimate, place all arguments
below in a common bounded $H^1$ set. On such a set,
\begin{equation}\label{eq:conv-B-lipschitz}
\|B(u)-B(v)\|_{H^{-1}}\le C\|u-v\|.
\end{equation}
Indeed, for any $w\in\dot H_{\rm per}^1(\Omega)$, the divergence-free
property of $\nabla^\perp(-\Delta)^{-1}u$ gives
\[
\begin{aligned}
|\langle B(u)-B(v),w\rangle|
\le{}&
\|u\|_{L^4}
\|\nabla^\perp(-\Delta)^{-1}(u-v)\|_{L^4}\,\|\nabla w\|\\
&+\|u-v\|
\|\nabla^\perp(-\Delta)^{-1}v\|_{L^\infty}\|\nabla w\|\\
\le{}&C\|u-v\|\,\|w\|_{H^1},
\end{aligned}
\]
where elliptic regularity and the two-dimensional Sobolev embeddings were
used. Taking the supremum over $w$ proves
\eqref{eq:conv-B-lipschitz}.

Using the finite-time form of \cref{prop:r-small}, as noted in
\cref{rem:conv-regularity}, we have, for $t_{n+1}\le T$,
\[
|G_\omega(r_{n,i})|\le C,
\qquad
|G_\omega(r_{n,i})-1|\le C\tau_*^2,
\qquad i=1,2.
\]
The constant is independent of $n$; when $\gamma=0$, it may depend on $T$.
Consequently, for $0\le j<i\le2$,
\begin{equation}\label{eq:conv-nonlinear-stage-bound}
\begin{aligned}
&\|G_\omega(r_{n,i})B_{n,j}-B_{n,j}^*\|_{H^{-1}}\\
&\quad\le
|G_\omega(r_{n,i})|
\|B_{n,j}-B_{n,j}^*\|_{H^{-1}}
+|G_\omega(r_{n,i})-1|\,\|B_{n,j}^*\|_{H^{-1}}\\
&\quad\le C\|e_{n,j}\|+C\tau_*^2.
\end{aligned}
\end{equation}

\smallskip
\noindent\emph{Step 4: Stagewise error-energy estimates.}
We follow the energy argument used in the stability analysis. Taking the
$L^2$ inner product of the $i$th equation in
\eqref{eq:conv-error-stages} with $2e_{n,i}$ and summing over
$i=1,\ldots,k$, where $k=1$ or $2$, yields
\begin{equation}\label{eq:conv-energy-sum}
\begin{aligned}
&\|e_{n,k}\|^2-\|e^n\|^2
+\sum_{i=1}^k\|e_{n,i}-e_{n,i-1}\|^2
+2\nu\tau_n\sum_{i=1}^k\sum_{j=1}^i
a_{i,j}\langle\nabla e_{n,j},\nabla e_{n,i}\rangle\\
={}&-2\tau_n\sum_{i=1}^k\sum_{j=0}^{i-1}
\hat a_{i,j}
\left\langle
G_\omega(r_{n,i})B_{n,j}-B_{n,j}^*,e_{n,i}
\right\rangle
-\boldsymbol{1}_{\{k=2\}}
\langle\mathcal R_n,2e_{n,2}\rangle.
\end{aligned}
\end{equation}
By \eqref{eq:implicit-stage-coercivity},
\begin{equation}\label{eq:conv-implicit-coercivity}
\sum_{i=1}^k\sum_{j=1}^i
a_{i,j}\langle\nabla e_{n,j},\nabla e_{n,i}\rangle
\ge\lambda_I\sum_{i=1}^k\|\nabla e_{n,i}\|^2.
\end{equation}
Furthermore, \eqref{eq:explicit-stage-bound},
\eqref{eq:conv-nonlinear-stage-bound}, the Poincar\'e inequality, and
Young's inequality give
\begin{equation}\label{eq:conv-explicit-error-bound}
\begin{aligned}
&2\tau_n\left|
\sum_{i=1}^k\sum_{j=0}^{i-1}\hat a_{i,j}
\left\langle
G_\omega(r_{n,i})B_{n,j}-B_{n,j}^*,e_{n,i}
\right\rangle\right|\\
&\qquad\le
\nu\lambda_I\tau_n\sum_{i=1}^k\|\nabla e_{n,i}\|^2
+C\tau_n\sum_{j=0}^{k-1}\|e_{n,j}\|^2
+C\tau_n\tau_*^4.
\end{aligned}
\end{equation}
For $k=2$, \eqref{eq:conv-local-defects} also gives
\begin{equation}\label{eq:conv-residual-bound}
\begin{aligned}
2|\langle\mathcal R_n,e_{n,2}\rangle|
&\le
\frac{\nu\lambda_I}{2}\tau_n\|\nabla e_{n,2}\|^2
+\frac{2C_P}{\nu\lambda_I}\tau_n^{-1}\|\mathcal R_n\|^2\\
&\le
\frac{\nu\lambda_I}{2}\tau_n\|\nabla e_{n,2}\|^2
+C\tau_n^5.
\end{aligned}
\end{equation}

Combining \eqref{eq:conv-energy-sum}--\eqref{eq:conv-residual-bound} and
discarding the nonnegative increment terms, we obtain, first with $k=1$,
\begin{equation}\label{eq:conv-stage-estimate}
\|e_{n,1}\|^2
\le(1+C\tau_n)\|e^n\|^2+C\tau_n\tau_*^4.
\end{equation}

\smallskip
\noindent\emph{Step 5: Endpoint recursion and discrete Gronwall estimate.}
Taking $k=2$ in \eqref{eq:conv-energy-sum}, using
\eqref{eq:conv-implicit-coercivity}--\eqref{eq:conv-residual-bound}, and
discarding the nonnegative stage-increment terms gives
\begin{align}
&\|e^{n+1}\|^2
+\frac{\nu\lambda_I}{2}\tau_n
\sum_{i=1}^2\|\nabla e_{n,i}\|^2
\notag\\
&\qquad\le
(1+C\tau_n)\|e^n\|^2
+C\tau_n\|e_{n,1}\|^2
+C\tau_n\tau_*^4+C\tau_n^5.
\label{eq:conv-two-stage-before-substitution}
\end{align}
Substituting \eqref{eq:conv-stage-estimate} into the right-hand side yields
\begin{align}
&\|e^{n+1}\|^2
+\frac{\nu\lambda_I}{2}\tau_n
\sum_{i=1}^2\|\nabla e_{n,i}\|^2
\notag\\
&\qquad\le
\bigl(1+C\tau_n+C\tau_n(1+C\tau_n)\bigr)\|e^n\|^2
+C\tau_n\tau_*^4
+C\tau_n^2\tau_*^4
+C\tau_n^5.
\label{eq:conv-two-stage-after-substitution}
\end{align}
Using $\tau_n\le\tau_*$ and absorbing fixed constants into $C$, we obtain
\begin{equation}\label{eq:conv-recursion}
\|e^{n+1}\|^2
+\frac{\nu\lambda_I}{2}\tau_n
\sum_{i=1}^2\|\nabla e_{n,i}\|^2
\le
(1+C\tau_n)\|e^n\|^2
+C\tau_n\tau_*^4.
\end{equation}

Since $\omega(0)=\omega^0$, we have $e^0=0$. Iterating
\eqref{eq:conv-recursion} and using the discrete Gronwall inequality gives
\begin{equation}
\begin{aligned}
\|e^{n+1}\|^2
&\le
C\tau_*^4
\sum_{p=0}^{n}\tau_p
\prod_{\ell=p+1}^{n}(1+C\tau_\ell)\\
&\le
C\tau_*^4
\sum_{p=0}^{n}\tau_p
\exp\left(C\sum_{\ell=p+1}^{n}\tau_\ell\right)\\
&\le
Ct_{n+1}e^{Ct_{n+1}}\tau_*^4.
\end{aligned}
\end{equation}
Taking square roots and relabeling the generic constant gives
\[
\|e^{n+1}\|
\le Ct_{n+1}^{1/2}e^{Ct_{n+1}/2}\tau_*^2,
\]
which is the asserted estimate.
\end{proof}

\ignore{

To make this mechanism explicit, set
\[
h:=\tau_n,\qquad A:=\nu\Delta,\qquad
Q(t,v):=f(t)-B(v),\qquad q(t):=Q(t,\omega(t)),
\]
and write $\omega_j:=\partial_t^j\omega(t_n)$ and
$q_j:=\partial_t^jq(t_n)$.  The equation gives
$\omega_1=A\omega_0+q_0$ and $\omega_2=A\omega_1+q_1$.  Since
the bilinear convection map
$\mathcal B(u,v):=\nabla^\perp(-\Delta)^{-1}u\cdot\nabla v$ is continuous
from $H^2\times H^2$ to $H$ and $B(v)=\mathcal B(v,v)$, the stated
assumptions imply $q\in W^{2,\infty}(0,T;H)$.

\smallskip
\noindent\emph{Step 2: Reference-stage accuracy and endpoint local truncation
error.}
Using \eqref{eq:conv-modified-stage} and the resolvent identity, we obtain
\begin{equation}\label{eq:conv-reference-expansion}
\begin{aligned}
\widetilde\omega_{n,1}
&=\omega_0+\eta h(I-\eta hA)^{-1}\omega_1\\
&=\omega_0+\eta h\omega_1+\eta^2h^2A\omega_1+O_H(h^3).
\end{aligned}
\end{equation}
Indeed, the remainder is
$\eta^3h^3A^2(I-\eta hA)^{-1}\omega_1$, which is bounded in $H$
because $\omega_1\in H^4=D(A^2)$.  The same assumptions and Taylor's
formula in $H^2$ give
\[
\widetilde\omega_{n,1}-\omega(t_n+\eta h)=O_{H^2}(h^2).
\]
The local Lipschitz continuity of $B:H^2\to H$ and the temporal Taylor
expansion of $q$ therefore yield
\begin{equation}\label{eq:conv-explicit-expansion}
Q(t_n+\eta h,\widetilde\omega_{n,1})
=q_0+\eta hq_1+O_H(h^2).
\end{equation}

Taylor expansion of the exact endpoint gives
\begin{equation}\label{eq:conv-exact-endpoint-expansion}
\begin{aligned}
(I-\eta hA)\omega(t_n+h)
={}&\omega_0+h\bigl((1-\eta)A\omega_0+q_0\bigr)\\
&+h^2\left[\left(\frac12-\eta\right)A\omega_1
+\frac12q_1\right]+O_H(h^3).
\end{aligned}
\end{equation}
On the other hand, \eqref{eq:conv-reference-expansion} and
\eqref{eq:conv-explicit-expansion} show that the right-hand side of the
following comparison-stage identity, before adding its residual, has the
expansion
\begin{align}
&\widetilde\omega_{n,1}+h(1-2\eta)A\widetilde\omega_{n,1}
+h(\delta-\eta)q_0
+h(1-\delta)Q(t_n+\eta h,\widetilde\omega_{n,1})
\notag\\
&\quad=\omega_0+h\bigl((1-\eta)A\omega_0+q_0\bigr)
+h^2\left[\eta(1-\eta)A\omega_1
+\eta(1-\delta)q_1\right]+O_H(h^3).
\label{eq:conv-comparison-expansion}
\end{align}
For the Alexander coefficients,
\[
\eta(1-\eta)=\frac12-\eta,
\qquad \eta(1-\delta)=\frac12.
\]
Thus \eqref{eq:conv-exact-endpoint-expansion} and
\eqref{eq:conv-comparison-expansion} agree through order $h^2$.  Restoring
the original notation gives
\begin{equation}\label{eq:conv-consistency-endpoint}
\begin{aligned}
\left(I-\nu a_{2,2}\tau_n\Delta\right)\omega(t_{n+1})
={}&\widetilde\omega_{n,1}
+\nu\tau_na_{2,1}\Delta\widetilde\omega_{n,1}\\
&+\tau_n\hat a_{2,0}
\bigl(f_{n,0}-B(\omega(t_n))\bigr)\\
&+\tau_n\hat a_{2,1}
\bigl(f_{n,1}-B(\widetilde\omega_{n,1})\bigr)
+\mathcal R_n,
\end{aligned}
\end{equation}
and proves the comparison-stage local truncation estimate
(cf.\ the related residual constructions in
\cite{zhang2004error,wang2015stability})
\begin{equation}\label{eq:conv-local-defects}
\|\mathcal R_n\|\le C\tau_n^3.
\end{equation}
Thus the comparison stage has an $O_{H^2}(h^2)$ internal-stage defect,
consistent with stage order one.  Nevertheless, its use in the second-stage
equation produces the $O_H(h^3)$ local endpoint defect required for global
second-order convergence.

\smallskip
\noindent\emph{Step 3: Error equations and nonlinear perturbation bounds.}
Let
\[
\begin{aligned}
e^n&:=\omega^n-\omega(t_n),
&e_{n,0}&:=e^n,\\
e_{n,1}&:=\omega_{n,1}-\widetilde\omega_{n,1},
&e_{n,2}:=e^{n+1}&:=\omega_{n,2}-\omega(t_{n+1}).
\end{aligned}
\]
and define
\[
B_{n,0}^*:=B(\omega(t_n)),
\qquad
B_{n,1}^*:=B(\widetilde\omega_{n,1}).
\]
Subtracting \eqref{eq:conv-modified-stage} and
\eqref{eq:conv-consistency-endpoint} from the two numerical stage equations \eqref{eqn:mr_sav_stage},
respectively, gives
\begin{subequations}\label{eq:conv-error-stages}
\begin{align}
e_{n,1}-e_{n,0}
={}&\nu\tau_na_{1,1}\Delta e_{n,1}
-\tau_n\hat a_{1,0}
\bigl(G_\omega(r_{n,1})B_{n,0}-B_{n,0}^*\bigr),
\label{eq:conv-error-stage1}\\
e_{n,2}-e_{n,1}
={}&\nu\tau_n\sum_{j=1}^2a_{2,j}\Delta e_{n,j}
-\tau_n\sum_{j=0}^1\hat a_{2,j}
\bigl(G_\omega(r_{n,2})B_{n,j}-B_{n,j}^*\bigr)
-\mathcal R_n.
\label{eq:conv-error-stage2}
\end{align}
\end{subequations}

The uniform $H^1$ bounds for the numerical and exact solutions imply that,
for $u$ and $v$ in the corresponding bounded $H^1$ set,
\begin{equation}\label{eq:conv-B-lipschitz}
\|B(u)-B(v)\|_{H^{-1}}\le C\|u-v\|.
\end{equation}
Indeed, for any $w\in\dot H_{\mathrm{per}}^1(\Omega)$, the divergence-free
property of $\nabla^\perp(-\Delta)^{-1}u$ gives
\[
\begin{aligned}
|\langle B(u)-B(v),w\rangle|
\le{}&
\|u\|_{L^4}
\|\nabla^\perp(-\Delta)^{-1}(u-v)\|_{L^4}\,\|\nabla w\|\\
&+\|u-v\|
\|\nabla^\perp(-\Delta)^{-1}v\|_{L^\infty}\|\nabla w\|\\
\le{}&C\|u-v\|\,\|w\|_{H^1},
\end{aligned}
\]
where elliptic regularity and the Sobolev embedding were used. Taking the supremum
over $w$ proves \eqref{eq:conv-B-lipschitz}.

Using the finite-time form of \cref{prop:r-small}, as noted in
\cref{rem:conv-regularity}, we have
\[
|G_\omega(r_{n,i})|\le C,
\qquad
|G_\omega(r_{n,i})-1|\le C\tau_*^2,
\qquad i=1,2.
\]
Consequently, for $0\le j<i\le2$,
\begin{equation}\label{eq:conv-nonlinear-stage-bound}
    \begin{aligned}
&\|G_\omega(r_{n,i})B_{n,j}-B_{n,j}^*\|_{H^{-1}} \\
\le & |G_\omega(r_{n,i})|\|B_{n,j}-B_{n,j}^*\|_{H^{-1}} +|G_\omega(r_{n,i})-1|\,\|B_{n,j}^*\|_{H^{-1}} \\
\le & C\|e_{n,j}\|+C\tau_*^2.
    \end{aligned}
\end{equation}

\smallskip
\noindent\emph{Step 4: Stagewise error-energy estimates.}
We now follow the energy argument used in the stability analysis. Taking the
$L^2$ inner product of the $i$th equation in
\eqref{eq:conv-error-stages} with $2e_{n,i}$ and summing over
$i=1,\ldots,k$, where $k=1$ or $2$, yields
\begin{equation}\label{eq:conv-energy-sum}
\begin{aligned}
&\|e_{n,k}\|^2-\|e^n\|^2
+\sum_{i=1}^k\|e_{n,i}-e_{n,i-1}\|^2+2\nu\tau_n\sum_{i=1}^k\sum_{j=1}^i
a_{i,j}\langle\nabla e_{n,j},\nabla e_{n,i}\rangle\\
={}&-2\tau_n\sum_{i=1}^k\sum_{j=0}^{i-1}
\hat a_{i,j}
\left\langle
G_\omega(r_{n,i})B_{n,j}-B_{n,j}^*,e_{n,i}
\right\rangle
- \boldsymbol{1}_{\{k=2\}}
\langle  \mathcal R_n,2e_{n,2}\rangle.
\end{aligned}
\end{equation}
By \eqref{eq:implicit-stage-coercivity}, we have
\begin{equation}\label{eq:conv-implicit-coercivity}
\sum_{i=1}^k\sum_{j=1}^i
a_{i,j}\langle\nabla e_{n,j},\nabla e_{n,i}\rangle
\ge\lambda_I\sum_{i=1}^k\|\nabla e_{n,i}\|^2.
\end{equation}
Furthermore, \eqref{eq:explicit-stage-bound},
\eqref{eq:conv-nonlinear-stage-bound}, the Poincar\'e inequality, and Young's
inequality imply the following estimate
\begin{equation}\label{eq:conv-explicit-error-bound}
\begin{aligned}
&2\tau_n\left|
\sum_{i=1}^k\sum_{j=0}^{i-1}{\hat a}_{i,j}
\left\langle
G_\omega(r_{n,i})B_{n,j}-B_{n,j}^*,e_{n,i}
\right\rangle\right|\\
&\qquad\le
\nu\lambda_I\tau_n\sum_{i=1}^k\|\nabla e_{n,i}\|^2
+C\tau_n\sum_{j=0}^{k-1}\|e_{n,j}\|^2
+C\tau_n\tau_*^4.
\end{aligned}
\end{equation}
For $k=2$, \eqref{eq:conv-local-defects} also gives
\begin{equation}\label{eq:conv-residual-bound}
2|\langle\mathcal R_n,e_{n,2}\rangle|
\le\frac{\nu\lambda_I}{2}\tau_n\|\nabla e_{n,2}\|^2
+\frac{2C_P}{\nu\lambda_I}\tau_n^{-1}\|\mathcal R_n\|^2
\le\frac{\nu\lambda_I}{2}\tau_n\|\nabla e_{n,2}\|^2
+C\tau_n^5.
\end{equation}

Combining \eqref{eq:conv-energy-sum}--\eqref{eq:conv-residual-bound} and
discarding the nonnegative increment terms, we obtain, first with $k=1$,
\begin{equation}\label{eq:conv-stage-estimate}
\|e_{n,1}\|^2
\le(1+C\tau_n)\|e^n\|^2+C\tau_n\tau_*^4.
\end{equation}

\smallskip
\noindent\emph{Step 5: Endpoint recursion and discrete Gronwall estimate.}
Taking $k=2$ in \eqref{eq:conv-energy-sum}, using
\eqref{eq:conv-implicit-coercivity}--\eqref{eq:conv-residual-bound}, and discarding
the nonnegative stage-increment terms, gives
\begin{align}
&\|e^{n+1}\|^2
+\frac{\nu\lambda_I}{2}\tau_n
\sum_{i=1}^2\|\nabla e_{n,i}\|^2
\notag\\
&\qquad\le
(1+C\tau_n)\|e^n\|^2
+C\tau_n\|e_{n,1}\|^2
+C\tau_n\tau_*^4+C\tau_n^5.
\label{eq:conv-two-stage-before-substitution}
\end{align}
Substituting the first-stage estimate
\eqref{eq:conv-stage-estimate} into the right-hand side yields
\begin{align}
&\|e^{n+1}\|^2
+\frac{\nu\lambda_I}{2}\tau_n
\sum_{i=1}^2\|\nabla e_{n,i}\|^2
\notag\\
&\qquad\le
\bigl(1+C\tau_n+C\tau_n(1+C\tau_n)\bigr)\|e^n\|^2
+C\tau_n\tau_*^4
+C\tau_n^2\tau_*^4
+C\tau_n^5.
\label{eq:conv-two-stage-after-substitution}
\end{align}
Using $\tau_n\le\tau_*$ and absorbing the fixed
constants into $C$, we obtain that
\begin{equation}\label{eq:conv-recursion}
\|e_{n+1}\|^2
+\frac{\nu\lambda_I}{2}\tau_n
\sum_{i=1}^2\|\nabla e_{n,i}\|^2
\le
(1+C\tau_n)\|e^n\|^2
+C\tau_n\tau_*^4.
\end{equation}

Since $e^0=0$, iterating \eqref{eq:conv-recursion} and using the discrete
Gronwall inequality gives
\begin{equation}
    \begin{aligned}
    \|e^{n+1}\|^2
    &\le
    C\tau_*^4
    \sum_{p=0}^{n}\tau_p
    \prod_{\ell=p+1}^{n}(1+C\tau_\ell) \le
    C\tau_*^4
    \sum_{p=0}^{n}\tau_p
    \exp\left(C\sum_{\ell=p+1}^{n}\tau_\ell\right) \\
    &\le
    Ct_{n+1}e^{Ct_{n+1}}\tau_*^4.
    \end{aligned}
\end{equation}
Taking square roots and relabeling the generic constant gives
\[
\|e^{n+1}\|
\le C t_{n+1}^{1/2}e^{Ct_{n+1}/2}\tau_*^2,
\]
which is the asserted estimate.

}

\section{Numerical Experiments}

In this section, we report the results of several numerical experiments that illustrate
(1) the temporal accuracy, (2) the stability advantage over classical schemes, (3) the long-time stability, and (4) the practical performance of the proposed adaptive scheme, including its automatic time-step control capability.

{
All numerical experiments were implemented in NumPy and run on a system equipped with an Intel Core i5-13400F CPU and 16 GB of memory.} The computational domain was the periodic box \(\Omega=(0,2\pi)\times(0,2\pi)\). Unless otherwise specified, spatial discretization employed a Fourier spectral method with \(256\) modes in each direction.
Under spatially analytic forcing, positive-time solutions of the NSE possess Gevrey regularity \cite{FoiasTemam1989}, and their Fourier coefficients therefore decay exponentially. Indeed, for the examples that we investigate below, the enstrophy in Fourier modes with radial wave number $|\boldsymbol{k}|\geq 43$ remains below $10^{-10}$. This supports our choice of 256 modes. The temporal errors reported below are computed at this fixed spatial resolution.
{
Unless otherwise specified, the reference solutions in the following experiments are computed via the ETDRK4 scheme \cite{kassam2005fourth} with $\tau_{\rm ref}=10^{-4}$. We have verified that halving the step size to $5\times10^{-5}$ over $T=10$ alters the vorticity solution by at most $6\times10^{-10}$ in the $L^2$ norm, supporting the the adequacy of the chosen step‑size.}

To characterize each flow regime, we use the Reynolds number defined as
$\mathrm{Re}:=LU/\nu=\|\bm{u}\|/\nu$.
Here $L=2\pi$ is the side length of the periodic box and
$U:=\|\bm{u}\|/L$ is the corresponding root-mean-square velocity scale.
Enstrophy \eqref{eqn:enstrophy_def} is utilized to 
 monitor the flow dynamics and assess long-time stability.

\subsection{Convergence test}
The following accuracy test has three purposes. First, we isolate the
mr-ccSAV correction by comparing the proposed method with the underlying
SDIRK2 discretization obtained by fixing $r\equiv0$. Second, we compare it
with  second-order ETD-mr-ccSAV discretizations in accuracy and cost.
Third, we test whether the designed second-order convergence persists on
nonuniform temporal grids obtained by perturbing uniform step sequences.

\begin{example}
    In this example, we consider the periodic 2D Navier--Stokes equations with viscosity $\nu=10^{-3}$, mean-reverting parameter $\gamma=1000$, and terminal times $T=1$, $1.5$, $2$, and $4$. The external force is set to be
    \begin{equation*}
        f(x,y,t)=\cos(x).
    \end{equation*}
    The initial vorticity is prescribed by a smooth trigonometric field containing multiple spatial modes,
    \begin{equation}\label{eqn:smooth_trig_vorticity}
        \omega_0(x,y) = \sum_{k,m=1}^{10} \frac{1}{(k^2+m^2)^{3/2}}\cos(kx)\cos(my).
    \end{equation}
    The initial value of the auxiliary variable is set to $r^0=0$. A Fourier spectral method with $256$ modes in each spatial direction is used for spatial discretization. The smooth periodic data and the Gevrey regularity of the solution imply rapid decay of the Fourier coefficients; the temporal errors are reported at this fixed spatial resolution. A reference solution is generated by the ETDRK4 scheme  with the uniform time step $\tau_{\rm ref}=0.1\times 2^{-10}$.
\end{example}

To examine both the pre-asymptotic and asymptotic regimes, we first take
\[
\tau=0.1\times2^{-k},\qquad
k=1,2,2.2,2.4,2.6,2.8,3,\ldots,8.
\]
Table~\ref{tab:sdirk2-fixed-convergence} reports the vorticity $L^2$ errors at
$T=1$, $1.5$, $2$, and $4$. At $T=1$, the two schemes produce essentially
identical errors over the tested range. At the longer final times, however,
the original SDIRK2 scheme becomes unstable or severely inaccurate for the
coarsest time steps, whereas the SDIRK2-mr-ccSAV scheme remains stable for all step sizes. Once the time step enters the asymptotic regime,
the errors of the two schemes become nearly indistinguishable and both exhibit
second-order convergence. Thus, the mr-ccSAV correction improves coarse-step
robustness without altering the asymptotic order of the underlying SDIRK2
discretization.

\begin{table}
\centering
\caption{Vorticity $L^2$ errors and observed convergence rates at $T=1$, $T=2$, $T=5$, $T=7$.
The reference solution is computed by ETDRK4 with
$\tau_{\rm ref}=0.1\times2^{-10}$. The time steps are given by
$\tau=0.1\times2^{-k}$, with additional levels $k=2.2,2.4,2.6,2.8$.}
\label{tab:sdirk2-fixed-convergence}
\setlength{\tabcolsep}{4pt}
\resizebox{\textwidth}{!}{
\begin{tabular}{l cc cc cc cc cc cc cc cc}
\toprule
& \multicolumn{4}{c}{$T=1$} & \multicolumn{4}{c}{$T=2$} & \multicolumn{4}{c}{$T=5$} & \multicolumn{4}{c}{$T=7$}\\
\cmidrule(lr){2-5}\cmidrule(lr){6-9}\cmidrule(lr){10-13}\cmidrule(lr){14-17}
& \multicolumn{2}{c}{SDIRK2} & \multicolumn{2}{c}{SDIRK2-mr-ccSAV} & \multicolumn{2}{c}{SDIRK2} & \multicolumn{2}{c}{SDIRK2-mr-ccSAV} & \multicolumn{2}{c}{SDIRK2} & \multicolumn{2}{c}{SDIRK2-mr-ccSAV} & \multicolumn{2}{c}{SDIRK2} & \multicolumn{2}{c}{SDIRK2-mr-ccSAV}\\
\cmidrule(lr){2-3}\cmidrule(lr){4-5}\cmidrule(lr){6-7}\cmidrule(lr){8-9}\cmidrule(lr){10-11}\cmidrule(lr){12-13}\cmidrule(lr){14-15}\cmidrule(lr){16-17}
$k$ & Error & Rate & Error & Rate & Error & Rate & Error & Rate & Error & Rate & Error & Rate & Error & Rate & Error & Rate\\
\midrule
1 & $1.9507\times10^{0}$ & -- & $1.7752\times10^{0}$ & -- & \texttt{NaN} & -- & $3.6276\times10^{0}$ & -- & \texttt{NaN} & -- & $8.0890\times10^{0}$ & -- & \texttt{NaN} & -- & $1.0337\times10^{1}$ & -- \\
2 & $7.4386\times10^{-3}$ & \texttt{pre-asymp} & $7.4386\times10^{-3}$ & \texttt{pre-asymp} & \texttt{NaN} & -- & $3.3539\times10^{0}$ & \texttt{pre-asymp} & \texttt{NaN} & -- & $7.5121\times10^{0}$ & \texttt{pre-asymp} & \texttt{NaN} & -- & $9.4415\times10^{0}$ & \texttt{pre-asymp} \\
2.2 & $5.5564\times10^{-3}$ & 2.10 & $5.5564\times10^{-3}$ & 2.10 & \texttt{NaN} & -- & $3.2491\times10^{0}$ & \texttt{pre-asymp} & \texttt{NaN} & -- & $7.3480\times10^{0}$ & \texttt{pre-asymp} & \texttt{NaN} & -- & $9.1763\times10^{0}$ & \texttt{pre-asymp} \\
2.4 & $4.1471\times10^{-3}$ & 2.11 & $4.1471\times10^{-3}$ & 2.11 & \texttt{NaN} & -- & $3.0545\times10^{0}$ & \texttt{pre-asymp} & \texttt{NaN} & -- & $7.1721\times10^{0}$ & \texttt{pre-asymp} & \texttt{NaN} & -- & $8.8754\times10^{0}$ & \texttt{pre-asymp} \\
2.6 & $3.1264\times10^{-3}$ & 2.04 & $3.1264\times10^{-3}$ & 2.04 & $4.0806\times10^{0}$ & -- & $2.0662\times10^{0}$ & \texttt{pre-asymp} & \texttt{NaN} & -- & $6.9692\times10^{0}$ & \texttt{pre-asymp} & \texttt{NaN} & -- & $8.5605\times10^{0}$ & \texttt{pre-asymp} \\
2.8 & $2.3681\times10^{-3}$ & 2.00 & $2.3681\times10^{-3}$ & 2.00 & $1.8878\times10^{-2}$ & \texttt{pre-asymp} & $1.8878\times10^{-2}$ & \texttt{pre-asymp} & \texttt{NaN} & -- & $6.7901\times10^{0}$ & \texttt{pre-asymp} & \texttt{NaN} & -- & $8.2226\times10^{0}$ & \texttt{pre-asymp} \\
3 & $1.8154\times10^{-3}$ & 1.92 & $1.8154\times10^{-3}$ & 1.92 & $1.3509\times10^{-2}$ & \texttt{pre-asymp} & $1.3509\times10^{-2}$ & \texttt{pre-asymp} & \texttt{NaN} & -- & $6.6313\times10^{0}$ & \texttt{pre-asymp} & \texttt{NaN} & -- & $7.8413\times10^{0}$ & \texttt{pre-asymp} \\
4 & $4.5242\times10^{-4}$ & 2.00 & $4.5242\times10^{-4}$ & 2.00 & $3.2193\times10^{-3}$ & 2.07 & $3.2193\times10^{-3}$ & 2.07 & \texttt{NaN} & -- & $4.7995\times10^{0}$ & \texttt{pre-asymp} & \texttt{NaN} & -- & $5.3270\times10^{0}$ & \texttt{pre-asymp} \\
5 & $1.1307\times10^{-4}$ & 2.00 & $1.1307\times10^{-4}$ & 2.00 & $7.9958\times10^{-4}$ & 2.01 & $7.9958\times10^{-4}$ & 2.01 & $1.3907\times10^{-2}$ & -- & $1.3907\times10^{-2}$ & \texttt{pre-asymp} & \texttt{NaN} & -- & $5.1523\times10^{0}$ & \texttt{pre-asymp} \\
6 & $2.8270\times10^{-5}$ & 2.00 & $2.8270\times10^{-5}$ & 2.00 & $1.9967\times10^{-4}$ & 2.00 & $1.9967\times10^{-4}$ & 2.00 & $3.4433\times10^{-3}$ & 2.01 & $3.4431\times10^{-3}$ & 2.01 & $4.8624\times10^{-3}$ & -- & $4.8330\times10^{-3}$ & \texttt{pre-asymp} \\
7 & $7.0680\times10^{-6}$ & 2.00 & $7.0680\times10^{-6}$ & 2.00 & $4.9904\times10^{-5}$ & 2.00 & $4.9904\times10^{-5}$ & 2.00 & $8.5853\times10^{-4}$ & 2.00 & $8.5844\times10^{-4}$ & 2.00 & $1.2123\times10^{-3}$ & 2.00 & $1.1871\times10^{-3}$ & 2.03 \\
8 & $1.7671\times10^{-6}$ & 2.00 & $1.7671\times10^{-6}$ & 2.00 & $1.2475\times10^{-5}$ & 2.00 & $1.2475\times10^{-5}$ & 2.00 & $2.1454\times10^{-4}$ & 2.00 & $2.1447\times10^{-4}$ & 2.00 & $3.0284\times10^{-4}$ & 2.00 & $2.8705\times10^{-4}$ & 2.05 \\
\bottomrule
\end{tabular}}
\end{table}

We next compare the SDIRK2-mr-ccSAV scheme with the BDF2-mr-SAV \cite{coleman2024efficient,han2025highly},
ETD-mr-ccSAV-MS2 \cite{wang2026unconditionally}, and ETD-mr-SAV-MS2-L \cite{wang2026linear} schemes. 
For this comparison, we set $T=1$ and
$\tau=0.01\times2^{-k}$, $k=0,\ldots,8$.
Table~\ref{tab:fixed-step-convergence-cpu} shows that all five methods attain
their expected second-order temporal accuracy. At each tested time step, the
SDIRK2-mr-ccSAV scheme agrees with the underlying SDIRK2 method to the
displayed digits and gives smaller vorticity errors than the BDF2-mr-SAV and
the two ETD-based schemes. Their errors are approximately $4$, $2.5$, and
$2.5$ times larger, respectively.
The CPU timings show that the mr-ccSAV correction introduces only modest
overhead relative to the underlying SDIRK2 method. Compared with the
BDF2-mr-SAV and ETD-based schemes tested here, SDIRK2-mr-ccSAV has a smaller
error constant and therefore attains comparable accuracy on a coarser
temporal grid and with less total CPU time. Thus, in the present
implementation, it provides a favorable balance between accuracy and
computational cost, while retaining the long-time robustness demonstrated
in the subsequent experiments.

\begin{table}[htbp]
\centering
\caption{Vorticity $L^2$ errors, observed convergence rates, and CPU times for the tested schemes at $T=1$ under nominal fixed time stepping, with the final step clipped to hit $T$ exactly when needed. Here $k$ is defined by $\tau=0.01\times 2^{-k}$.}
\label{tab:fixed-step-convergence-cpu}
\resizebox{\textwidth}{!}{
\begin{tabular}{cccccccccccccccc}
\toprule
$k$ & \multicolumn{3}{c}{SDIRK2} & \multicolumn{3}{c}{SDIRK2-mr-ccSAV} & \multicolumn{3}{c}{BDF2-mr-SAV} & \multicolumn{3}{c}{ETD-mr-ccSAV-MS2} & \multicolumn{3}{c}{ETD-mr-SAV-MS2-L} \\
\cmidrule(lr){2-4} \cmidrule(lr){5-7} \cmidrule(lr){8-10} \cmidrule(lr){11-13} \cmidrule(lr){14-16}
 & Error & Order & CPU (s) & Error & Order & CPU (s) & Error & Order & CPU (s) & Error & Order & CPU (s) & Error & Order & CPU (s) \\
\midrule
0 & $9.061\times 10^{-6}$ & -- & 0.66 & $9.061\times 10^{-6}$ & -- & 0.67 & $3.711\times 10^{-5}$ & -- & 0.59 & $2.319\times 10^{-5}$ & -- & 0.54 & $2.308\times 10^{-5}$ & -- & 0.91 \\
1 & $2.262\times 10^{-6}$ & 2.00 & 1.27 & $2.262\times 10^{-6}$ & 2.00 & 1.25 & $9.158\times 10^{-6}$ & 2.02 & 1.00 & $5.741\times 10^{-6}$ & 2.01 & 1.01 & $5.723\times 10^{-6}$ & 2.01 & 1.75 \\
2 & $5.654\times 10^{-7}$ & 2.00 & 2.53 & $5.654\times 10^{-7}$ & 2.00 & 2.53 & $2.291\times 10^{-6}$ & 2.00 & 1.86 & $1.435\times 10^{-6}$ & 2.00 & 1.94 & $1.431\times 10^{-6}$ & 2.00 & 3.33 \\
3 & $1.414\times 10^{-7}$ & 2.00 & 4.85 & $1.414\times 10^{-7}$ & 2.00 & 5.09 & $5.737\times 10^{-7}$ & 2.00 & 3.57 & $3.589\times 10^{-7}$ & 2.00 & 3.82 & $3.585\times 10^{-7}$ & 2.00 & 6.53 \\
4 & $3.534\times 10^{-8}$ & 2.00 & 9.79 & $3.534\times 10^{-8}$ & 2.00 & 10.08 & $1.436\times 10^{-7}$ & 2.00 & 7.54 & $8.978\times 10^{-8}$ & 2.00 & 7.47 & $8.973\times 10^{-8}$ & 2.00 & 13.23 \\
5 & $8.835\times 10^{-9}$ & 2.00 & 19.42 & $8.835\times 10^{-9}$ & 2.00 & 20.45 & $3.592\times 10^{-8}$ & 2.00 & 14.67 & $2.245\times 10^{-8}$ & 2.00 & 15.05 & $2.240\times 10^{-8}$ & 2.00 & 26.54 \\
6 & $2.209\times 10^{-9}$ & 2.00 & 39.15 & $2.209\times 10^{-9}$ & 2.00 & 40.73 & $8.983\times 10^{-9}$ & 2.00 & 29.00 & $5.614\times 10^{-9}$ & 2.00 & 30.85 & $5.550\times 10^{-9}$ & 2.01 & 52.38 \\
7 & $5.522\times 10^{-10}$ & 2.00 & 77.04 & $5.522\times 10^{-10}$ & 2.00 & 81.65 & $2.246\times 10^{-9}$ & 2.00 & 56.77 & $1.404\times 10^{-9}$ & 2.00 & 61.43 & $1.340\times 10^{-9}$ & 2.05 & 103.62 \\
8 & $1.381\times 10^{-10}$ & 2.00 & 154.79 & $1.381\times 10^{-10}$ & 2.00 & 167.21 & $5.616\times 10^{-10}$ & 2.00 & 114.01 & $3.509\times 10^{-10}$ & 2.00 & 121.64 & $3.149\times 10^{-10}$ & 2.09 & 207.03 \\
\bottomrule
\end{tabular}
}
\end{table}

Finally, we examine the variable-step case. Starting from a uniform grid, we perturb
each time step by a relative amplitude of $15\%$ and then rescale the resulting
sequence so that the final time remains $T=2$. The errors are reported as
functions of the total number of time steps $N$. As shown in
Table~\ref{tab:convergency_perturbed}, the SDIRK2-mr-ccSAV scheme retains its
expected second-order convergence under the perturbed time-step sequence.
These results confirm that the proposed mr-ccSAV correction preserves the designed convergence order for variable time steps.

\begin{table}[htbp]
\centering
\scriptsize
\caption{Vorticity $L^2$ errors and observed convergence rates for the
SDIRK2-mr-ccSAV scheme at $T=2$ under a $15\%$ perturbed variable-step sequence.
Here $N$ denotes the total number of time steps, and the observed rates are
based on the maximum step size $\tau_*$.}
\label{tab:convergency_perturbed}
\resizebox{\textwidth}{!}{%
\begin{tabular}{@{} l *{7}{c} @{}}
\toprule
$N$ & $2^{4}\times 10$ & $2^{5}\times 10$ & $2^{6}\times 10$ & $2^{7}\times 10$ & $2^{8}\times 10$ & $2^{9}\times 10$ & $2^{10}\times 10$  \\
\midrule
Error & $1.780\times 10^{-6}$ & $4.543\times 10^{-7}$ & $1.132\times 10^{-7}$ & $2.818\times 10^{-8}$ & $7.072\times 10^{-9}$ & $1.766\times 10^{-9}$ & $4.416\times 10^{-10}$  \\
Order & -- & 1.97 & 2.00 & 2.01 & 1.99 & 2.00 & 2.00  \\
\bottomrule
\end{tabular}}
\end{table}

\begin{example}[Effect of the mean-reversion parameter]
\label{exm:gamma-effect}
We examine how the mean-reversion parameter affects the auxiliary-variable
drift and the accuracy of the vorticity approximation.
We consider the periodic vorticity--streamfunction equation with viscosity $\nu=1/40$ and Kolmogorov-type forcing $f(x,y,t)=4\cos(4y)$.

To avoid a simple steady initial profile, we initialize the flow using the smooth vorticity \eqref{eqn:smooth_trig_vorticity} adopted for the accuracy test.
We set $r^0=0$, adopt a uniform time step $\tau=10^{-3}$ up to $T=20$, and take mean-reversion parameters $\gamma=0$, $50$, $500$, $1000$, and $2000$ for comparative analysis under the SDIRK2-mr-ccSAV scheme. The case $\gamma=0$ corresponds to the non-mean-reverting SAV/ZEC limit,
whereas positive values introduce damping in the auxiliary-variable equation.
\end{example}

Figure~\ref{fig:gamma-effect} shows the auxiliary-variable magnitude, the
relative $L^2$ vorticity error, and the enstrophy history for the five tested
values of $\gamma$. Table~\ref{tab:gamma-effect} reports selected-time
diagnostics for four representative values. The results clearly distinguish the non-mean-reverting and mean-reverting regimes.

When $\gamma=0$, the perturbations arising in the auxiliary-variable equation lack damping and can accumulate over time, which aligns with the bound presented in \cref{rem:eff_damp}.
The enstrophy furthermore deviates significantly from the reference trajectory. Consequently, the non-mean-reverting discretization cannot maintain accurate long-time approximations under this time step. 
In addition, all positive values of $\gamma$ examined effectively suppress the drift of the auxiliary variable. This observation agrees with the damping mechanism behind the uniform $O(\tau_*)$ estimate for the auxiliary variable.

Moreover, from \cref{fig:gamma-effect}, we observe that the improvement of the physical solution with respect to $\gamma$ is non-monotonic. This arises because once the drift of the auxiliary variable falls below the threshold relevant to the vorticity update, further increasing $\gamma$ yields only marginal additional gains in physical accuracy. Within this regime, the residual error is dominated by temporal discretization rather than by auxiliary-variable drift.

Furthermore, we observe that for $\gamma = 0$, substantial errors do not manifest immediately in the enstrophy. Consequently, consistency of enstrophy alone cannot guarantee comparable accuracy of the complete vorticity field.
Enstrophy should therefore be regarded as a complementary stability
diagnostic rather than a standalone measure of solution accuracy. Overall,
the experiment confirms that mean reversion is essential for controlling
long-time auxiliary-variable drift, while also indicating a saturation of
the accuracy benefit for sufficiently large $\gamma$.

\begin{figure}[htbp]
\centering
\IfFileExists{figure/mrSAV_gamma_comparison.pdf}{%
\includegraphics[width=0.98\linewidth]{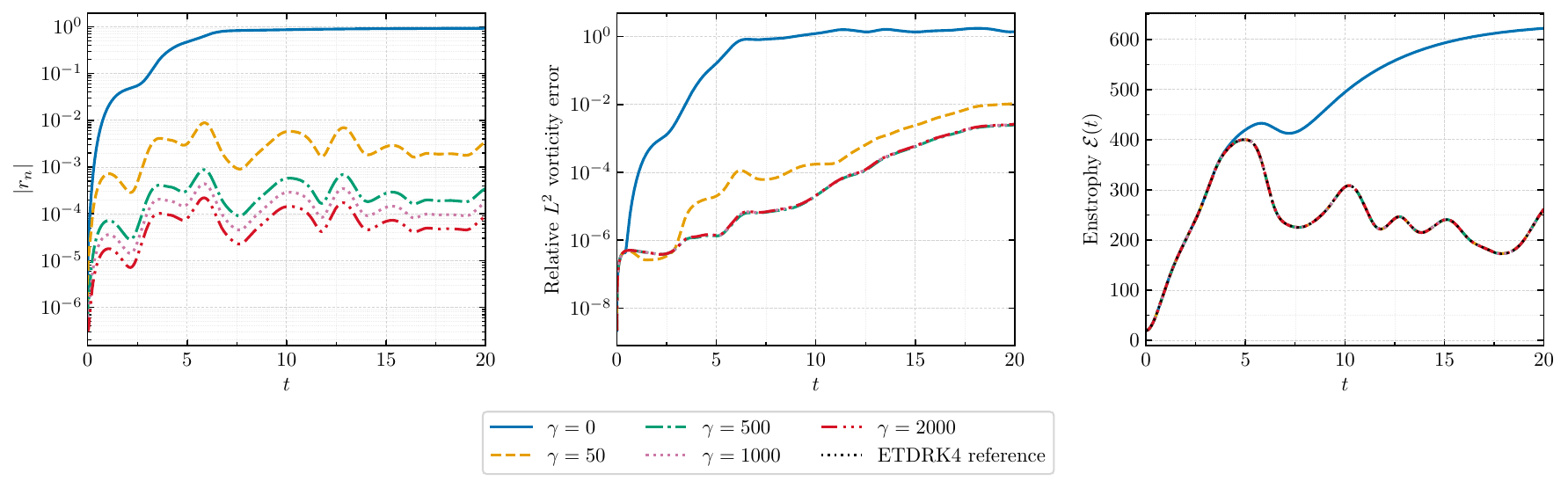}}{%
\fbox{\parbox[c][2.1in][c]{0.9\linewidth}{\centering
Figure file \texttt{mrSAV\_gamma\_comparison.pdf} was not supplied.}}}
\caption{Effect of the mean-reversion parameter for
$\gamma=0,50,500,1000,2000$. From left to right: the auxiliary-variable
magnitude $|r^n|$, the relative $L^2$ vorticity error with respect to the
ETDRK4 reference solution, and the enstrophy $\mathcal E(t)$. The black
dotted curve in the right panel denotes the ETDRK4 reference enstrophy.}
\label{fig:gamma-effect}
\end{figure}

\begin{table}[htbp]
\centering
\caption{Diagnostics for the mean-reversion parameter experiment at selected times. The rows correspond to the time $t$, and the three columns grouped under each value of $\gamma$ report $|r|$, the relative vorticity error $e_\omega$, and the relative enstrophy error $e_{\mathcal E}$, respectively. The upper and lower tabular blocks contain the results for $(\gamma=0,50)$ and $(\gamma=500,1000)$, respectively.}
\label{tab:gamma-effect}
\resizebox{\textwidth}{!}{%
\begin{tabular}{c *{6}{c}}
\toprule
& \multicolumn{3}{c}{$\gamma=0$} & \multicolumn{3}{c}{$\gamma=50$} \\
\cmidrule(lr){2-4}\cmidrule(lr){5-7}
$t$ & $|r|$ & $e_\omega$ & $e_{\mathcal E}$ & $|r|$ & $e_\omega$ & $e_{\mathcal E}$ \\
\midrule
4 & $2.887\times10^{-1}$ & $3.733\times10^{-2}$ & $7.489\times10^{-3}$ & $3.882\times10^{-3}$ & $1.228\times10^{-5}$ & $3.200\times10^{-6}$ \\
8 & $8.339\times10^{-1}$ & $8.716\times10^{-1}$ & $8.687\times10^{-1}$ & $1.117\times10^{-3}$ & $6.950\times10^{-5}$ & $7.490\times10^{-6}$ \\
16 & $9.151\times10^{-1}$ & $1.484\times10^{0}$ & $1.850\times10^{0}$ & $2.128\times10^{-3}$ & $3.889\times10^{-3}$ & $7.417\times10^{-4}$ \\
20 & $9.258\times10^{-1}$ & $1.389\times10^{0}$ & $1.376\times10^{0}$ & $3.455\times10^{-3}$ & $1.091\times10^{-2}$ & $1.532\times10^{-3}$ \\
\bottomrule
\end{tabular}%
}
\par\medskip
\resizebox{\textwidth}{!}{%
\begin{tabular}{c *{6}{c}}
\toprule
& \multicolumn{3}{c}{$\gamma=500$} & \multicolumn{3}{c}{$\gamma=1000$} \\
\cmidrule(lr){2-4}\cmidrule(lr){5-7}
$t$ & $|r|$ & $e_\omega$ & $e_{\mathcal E}$ & $|r|$ & $e_\omega$ & $e_{\mathcal E}$ \\
\midrule
4 & $3.858\times10^{-4}$ & $1.206\times10^{-6}$ & $6.120\times10^{-8}$ & $1.928\times10^{-4}$ & $1.268\times10^{-6}$ & $3.697\times10^{-8}$ \\
8 & $1.137\times10^{-4}$ & $7.454\times10^{-6}$ & $7.925\times10^{-7}$ & $5.691\times10^{-5}$ & $7.682\times10^{-6}$ & $8.562\times10^{-7}$ \\
16 & $2.106\times10^{-4}$ & $9.707\times10^{-4}$ & $1.988\times10^{-4}$ & $1.052\times10^{-4}$ & $1.007\times10^{-3}$ & $2.059\times10^{-4}$ \\
20 & $3.506\times10^{-4}$ & $2.525\times10^{-3}$ & $3.183\times10^{-4}$ & $1.754\times10^{-4}$ & $2.625\times10^{-3}$ & $3.320\times10^{-4}$ \\
\bottomrule
\end{tabular}%
}
\end{table}

\subsection{Performance of the adaptive scheme}
We next examine the accuracy and computational efficiency of the adaptive
scheme.

\begin{example}[Kolmogorov forcing: adaptive time stepping]
\label{exm:kolmogorov_1}
We consider the periodic vorticity--streamfunction problem with $f(x,y)=m\cos(my)$, $m=4$, $\nu=\frac{1}{50}$. The initial streamfunction is prescribed by the isotropic Fourier
perturbation
\begin{equation}\label{eqn:iso_per}
\psi_\varepsilon(x,y)
=
\varepsilon
\sum_{\substack{\bm{k}=(k_1,k_2)\in\mathbb Z^2\\0<|\bm{k}|\le10}}
\frac{1}{|\bm{k}|^3}(\cos(k_1x)+\sin(k_1x))(\cos(k_2y)+\sin(k_2y)),
\end{equation}
with $\varepsilon=3$. In addition, we set $r^0=0$, $\gamma=1000$ in the scheme, and the final time is set to $T=30$. 

Two time-stepping strategies are compared: (1) the uniform step sizes $\tau = 0.004$, $0.002$, $0.001$, $0.0005$ are used for the SDIRK2-mr-ccSAV scheme; and (2) the adaptive SDIRK2-mr-ccSAV scheme with the parameters in the step-controller  set to be
\begin{equation}\label{adaptive_set}
\begin{aligned}
\rho &= 0.9,\quad
\mathrm{tol}_{\omega}=5\times10^{-5},\quad
\mathrm{tol}_{r}=10^{-2},\\
\varepsilon_{\mathrm{ref}} &= 10^{-12},\quad
\tau_{\min}=10^{-5},\quad
\tau_{\max}=10^{-2}.
\end{aligned}
\end{equation}
\end{example}

Figure~\ref{fig:long_time_adap} reports the adaptive time-step histories, cumulative
step counts, CPU times, the auxiliary variable, the local $L^2$ error indicator, and the reference relative errors. 
{
To compute the reference relative vorticity errors in panel (f), the fixed-step solutions are compared with an ETDRK4 reference solution computed using the uniform time step \(\tau_{\mathrm{ref}}=10^{-4}\). For each accepted adaptive step over \([t_n,t_{n+1}]\), a local ETDRK4 reference solution is computed using \(N_n=\lceil \tau_n/\tau_{ref}\rceil\) equal substeps of size \(\delta t_n=\tau_n/N_n\le \tau_{ref}\), so that the last step lands exactly at \(t_{n+1}\).}
(i) Panel (a) indicates that the step-size controller changes the time step according to the local solution activity and the prescribed tolerances. 
(ii) Panel~(f) shows that the error of the adaptive scheme remains stably controlled throughout the computation and lies between those of the fixed-step schemes with $\tau=10^{-3}$ and $\tau=5\times10^{-4}$.
(iii) Panel (b) and (c) demonstrate that the adaptive scheme uses fewer cumulative steps and less CPU time than the fixed-step scheme with similar relative errors. 
(iv) Panel (d) and (e) show the adaptive scheme's ability to control the scalar auxiliary variable and the relative error indicator in the scheme.
We conclude that the adaptive method provides a more efficient balance between accuracy and computational effort for this test case.

\begin{figure}[htbp]
\centering
\includegraphics[width=0.9\linewidth]{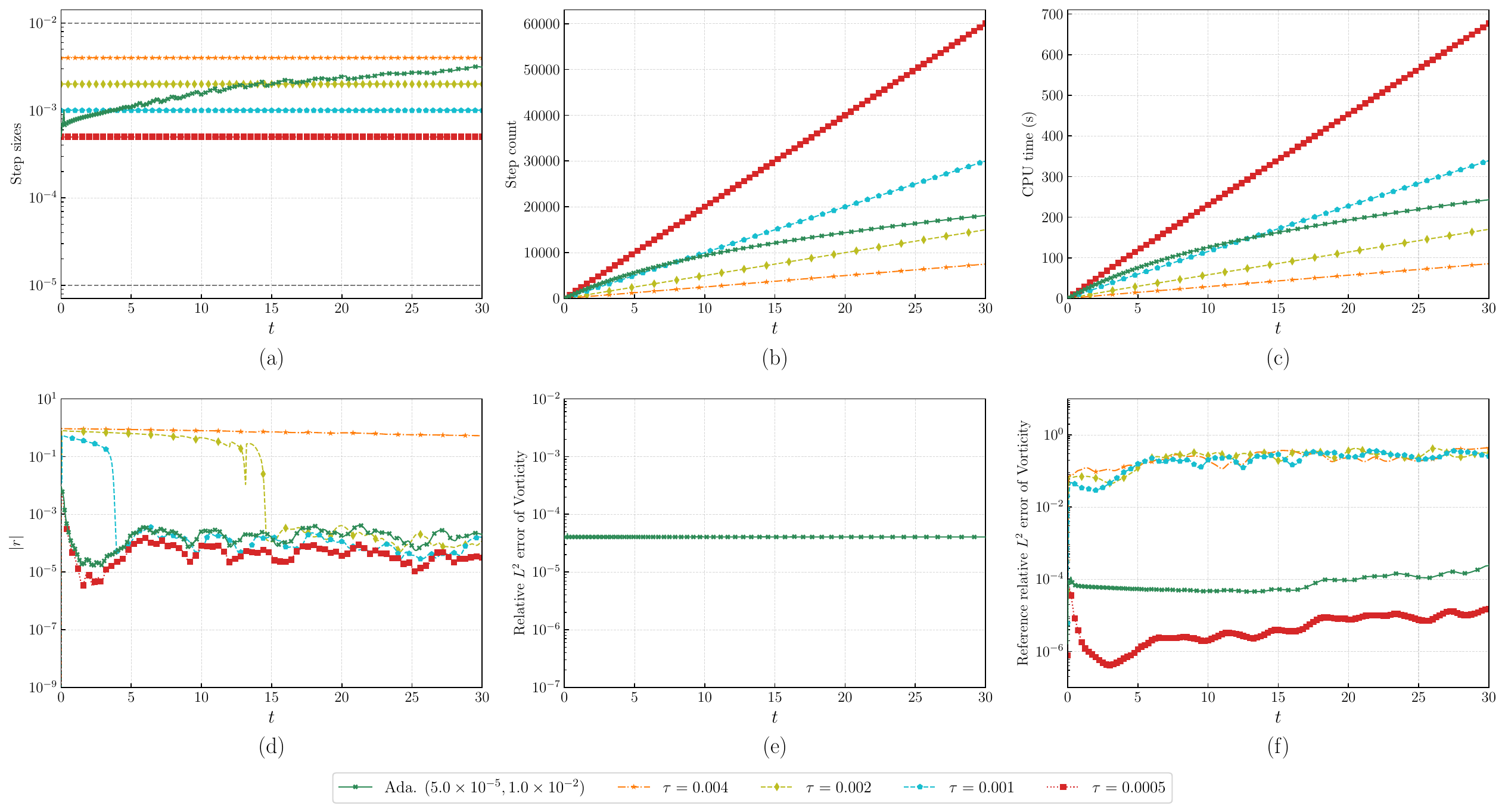}
\caption{Comparison of the fixed-step and adaptive SDIRK2-mr-ccSAV schemes
over $t\in[0,30]$. The adaptive computation uses
$(\mathrm{tol}_{\omega},\mathrm{tol}_{r})
=(5\times10^{-5},10^{-2})$ and $\rho=0.9$.
(a) Time-step histories; the horizontal dashed lines indicate the adaptive
bounds $\tau_{\min}=10^{-5}$ and $\tau_{\max}=10^{-2}$.
(b) Cumulative step counts. (c) Cumulative CPU times.
(d) Auxiliary-variable magnitude $|r|$.
(e) Relative embedded-pair discrepancy $e_{\omega,\mathrm{emb}}$.
(f) Relative vorticity error \(e_{\omega,\mathrm{ref}}\) measured against an ETDRK4 reference solution computed with time steps no larger than \(\tau_{\mathrm{ref}}=10^{-4}\).} 
\label{fig:long_time_adap}
\end{figure}

\subsection{Long-time efficiency and statistics}
The next example examines the efficiency and statistical fidelity of the adaptive strategy over a genuinely long time interval. In this regime, pointwise agreement between individual trajectories is not expected because of the sensitive dependence of the bursting dynamics on temporal perturbations. We therefore compare boundedness, representative flow scales, enstrophy distributions, and intermittent-event statistics. 

\begin{example}[Kolmogorov forcing: long time]
We consider the Kolmogorov flow $f(x,y)=m\cos(my)$ on $\Omega=(0,2\pi)^2$ with $m=4$, $\nu=1/50$, and $\gamma=1000$. The initial streamfunction is taken as the isotropic perturbation $\psi_0=\psi_\varepsilon$ in \eqref{eqn:iso_per} with $\varepsilon=4$. Both long-time simulations are computed using the SDIRK2-mr-ccSAV scheme up to $T=10000$. The fixed-step computation uses $\tau=5\times10^{-4}$, while the adaptive computation uses the controller parameters specified in \eqref{adaptive_set}. This choice of fixed step size is suggested by \cref{exm:kolmogorov_1}, as it produces smaller local relative errors in vorticity compared with the adaptive scheme for the chosen set of parameters. {
For the adaptive computation, the enstrophy trajectory is first resampled using piecewise cubic Hermite interpolation onto a uniform physical-time grid, after which the statistical quantities are evaluated.}
\end{example}

Figure~\ref{fig:adaptive_step_k} summarizes the long-time diagnostics. Panel~(a) shows that the controller reduces the step size during dynamically active episodes and increases it during relatively quiescent intervals. 
Panel~(b) shows that the fixed-step and adaptive trajectories separate over long time, as expected in this sensitive regime, while their enstrophy remains bounded. Panel~(c) demonstrates close agreement between the corresponding long-time enstrophy distributions. Panel~(d) reports the cumulative CPU time as a function of physical time. The adaptive computation requires $1{,}882{,}783$ accepted steps and $10.9$ hours, whereas the fixed-step computation requires $20{,}000{,}000$ steps and $104.6$ hours. Thus, the adaptive strategy reduces the computational cost by approximately a factor of ten while providing good approximations of the long-time statistical quantities.

\begin{figure}[htbp]
    \centering
    \IfFileExists{figure/vs_diagnostics_5e4_adaptive.pdf}{%
    \includegraphics[width=0.9\linewidth]{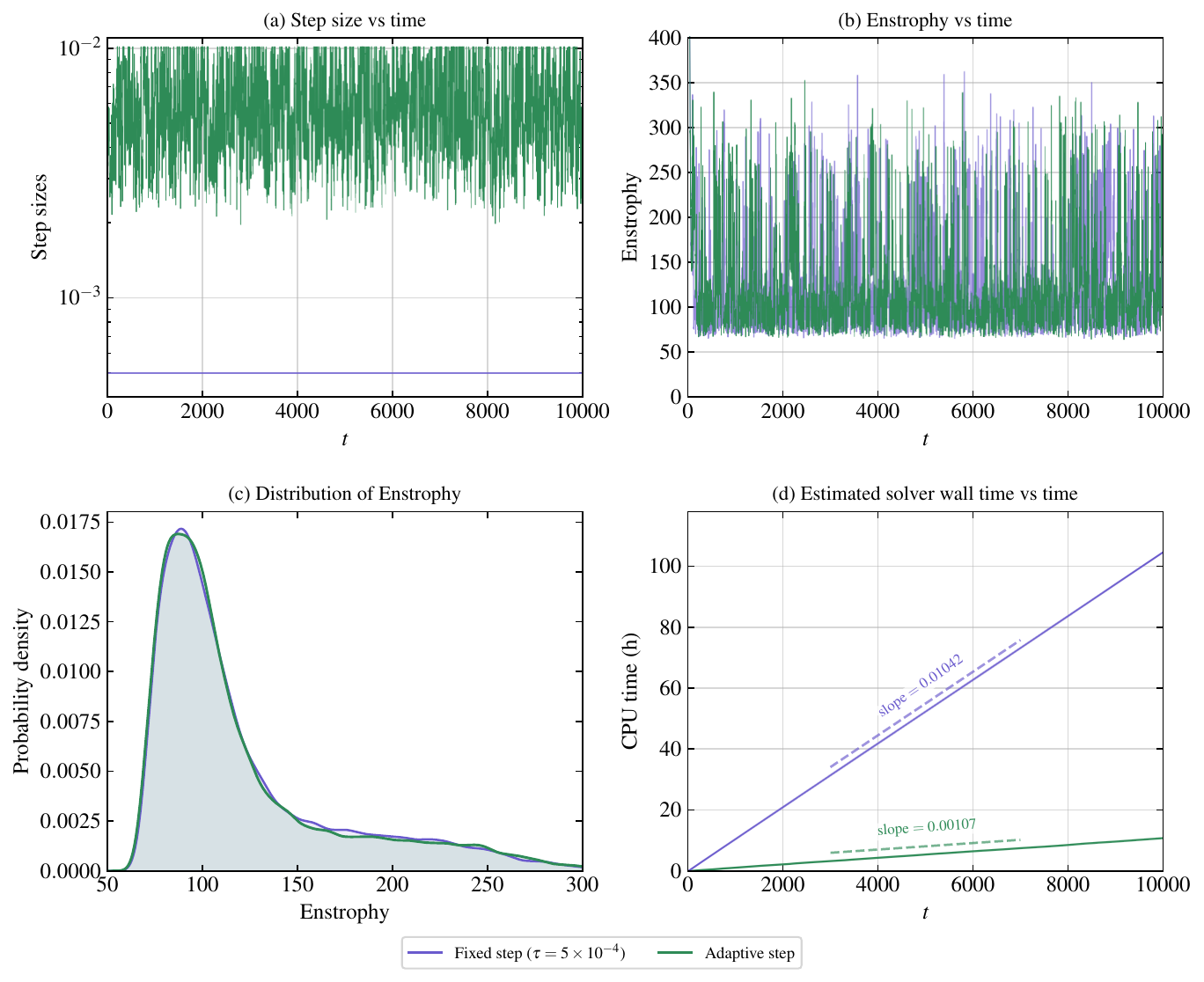}}{%
    \fbox{\parbox[c][2.1in][c]{0.85\linewidth}{\centering
    Figure file \texttt{vs\_diagnostics\_5e4\_adaptive.pdf} was not supplied.}}}
    \caption{Long-time diagnostics for the Kolmogorov flow with $\nu=1/50$ and $m=4$. (a) Adaptive step sizes $\tau_n$, with the fixed step $\tau=5\times10^{-4}$ shown for reference. (b) Enstrophy histories of the fixed-step and adaptive computations. (c) Probability density functions of enstrophy estimated on the statistically steady interval. (d) CPU time as a function of physical time for the fixed-step and adaptive-step computations.}
    \label{fig:adaptive_step_k}
\end{figure}

Notice that panel~(b) shows multiple quiescent windows and neighboring bursting events in the enstrophy evolution. We define quiescent windows as intervals during which the enstrophy remains below $\mathbb{E}(\mathcal E)+\operatorname{Std}(\mathcal E)\approx170$; their mean
duration is about $20$ eddy turnover times, so they are not short transients.
In contrast, bursts are identified when the enstrophy exceeds
$\mathbb{E}(\mathcal E)+2\operatorname{Std}(\mathcal E)\approx220$. 
The mean burst duration is approximately $3.28$ time units, or
$0.72$ eddy-turnover times.
Here the eddy-turnover time is $T_e=L/U$, where
$U:=\left(2(L^2T)^{-1}\int_0^T E(t)\,dt\right)^{1/2}$ and
$E(t):=\frac12\int_\Omega|\nabla^\perp\psi(\cdot,t)|^2$.
In particular, the mean interval between successive burst onsets is approximately
$45.78$ time units, corresponding to $10.11$ eddy-turnover times.
Consequently, an integration spanning only a few eddy-turnover times may
fall entirely within a quiescent window and miss the subsequent transition
to a bursting episode.
This observation underscores that, for
intermittent regimes, ``long time'' should be understood relative to the
recurrence time of dynamically important events, not merely relative to a
few eddy-turnover times.

\begin{table}[htbp]
\centering
\scriptsize
\caption{Long-time enstrophy statistics and time-averaged flow scales obtained by the fixed-step SDIRK2-mr-ccSAV computation with $\tau=5\times10^{-4}$ and its adaptive counterpart. The averaging interval is $1000\leq t\leq10000$, and the relative difference is measured with respect to the fixed-step result.}
\label{tab:moment_flow_scales_comparison}
\setlength{\tabcolsep}{4pt}
\begin{tabular}{@{}lccc@{}}
\toprule
Quantity & Fixed $\tau=5\times10^{-4}$ & Adaptive step & Relative difference \\
\midrule
\multicolumn{4}{@{}l}{\textit{Long-time statistics of the enstrophy}} \\
Mean $\mathbb{E}[\mathcal{E}]$ & $122.0313$ & $121.5634$ & $-0.383\%$ \\
Second moment $\mathbb{E}[\mathcal{E}^2]$ & $17375.5703$ & $17331.8726$ & $-0.251\%$ \\
Variance $\operatorname{Var}(\mathcal{E})$ & $2483.9394$ & $2554.2068$ & $2.829\%$ \\
Standard deviation $\operatorname{Std}(\mathcal{E})$ & $49.8391$ & $50.5392$ & $1.405\%$ \\
\midrule
\multicolumn{4}{@{}l}{\textit{Time-averaged flow scales}} \\
Typical velocity $U$ & $1.3238$ & $1.3261$ & $0.180\%$ \\
Representative Reynolds number $Re_U$ & $332.7061$ & $333.3063$ & $0.180\%$ \\
Eddy-turnover time $T_e$ & $4.7463$ & $4.7378$ & $-0.180\%$ \\
\bottomrule
\end{tabular}
\end{table}

We also compare the long-time enstrophy statistics in
\cref{tab:moment_flow_scales_comparison}. The mean enstrophy and the
time-averaged flow scales agree closely, while the variance and standard
deviation are more sensitive: their relative differences are approximately
$2.829\%$ and $1.405\%$, respectively. Thus the experiment supports good
agreement in the principal bulk statistics, but it does not justify a claim
of negligible bias in all fluctuation statistics.

The distributional discrepancy is small in the bulk and over the full resolved
range, but is more pronounced in the tail.
\ignore{
Table~\ref{tab:tv_comparison} reports the total variation distances and
relative $L^1$ errors for the low-enstrophy range $\mathcal{E}<150$, the
bursting-tail range $\mathcal{E}\geq150$, and the full resolved range. These
results show that the adaptive computation approximates the long-time
enstrophy distribution of the fixed-step reference reasonably well, with the
largest relative discrepancy occurring in the rare-event tail.

\begin{table}[htbp]
\centering
\scriptsize
\caption{Discrepancy between the enstrophy distributions obtained by the fixed-step SDIRK2-mr-ccSAV computation with $\tau=5\times10^{-4}$ and its adaptive counterpart.}
\label{tab:tv_comparison}
\begin{tabular}{@{}lcc@{}}
\toprule
Enstrophy range & Total variation distance & Relative $L^1$ error \\
\midrule
$\mathcal{E}\leq150$ & $0.009210$ & $2.332\%$ \\
$150\leq\mathcal{E}$ & $0.009489$ & $9.037\%$ \\
$-\infty\leq\mathcal{E}\leq \infty$ & $0.018699$ & $3.740\%$ \\
\bottomrule
\end{tabular}
\end{table}

Finally, we present a few statistics on tail events. Since we are interested in the long-time statistics, we skip the first 1000 units of spin-up time and focus on $t\geq1000$.
For the threshold $\mathbb{E}[\mathcal{E}]+k\operatorname{Std}(\mathcal{E})$,
we define one tail event as one connected time interval over which the
enstrophy exceeds that threshold. The mean and standard deviation are those
reported in Table~\ref{tab:moment_flow_scales_comparison}.
Table~\ref{tab:burst_count_thresholds} reports the event counts for
$k=1,2,3,4$ and their relative differences from the fixed-step results.
As $k$ increases, the number of detected events decreases rapidly, making
the relative difference increasingly sensitive to small absolute variations
in the count. For example, the $50\%$ difference for $k=4$ corresponds to
only four additional events. Temporal clustering further reduces the
effective number of independent observations and increases sampling
uncertainty. The results therefore show reasonable qualitative agreement in
tail-event frequency, but should not be interpreted as precise estimates of
rare-event probabilities.


\begin{table}[htbp]
\centering
\scriptsize
\caption{Tail event counts for thresholds $\mathbb{E}[\mathcal{E}]+k\operatorname{Std}(\mathcal{E})$ after $t\geq1000$.}
\label{tab:burst_count_thresholds}
\begin{tabular}{@{}cccc@{}}
\toprule
$k$ & Fixed-step tail count & Adaptive-step tail count & Relative difference \\
\midrule
$1$ & $192$ & $202$ & $5.21\%$ \\
$2$ & $210$ & $215$ & $2.38\%$ \\
$3$ & $77$ & $93$ & $20.78\%$ \\
$4$ & $8$ & $12$ & $50\%$ \\
\bottomrule
\end{tabular}
\end{table}

The tail statistics are naturally more sensitive than the mean and variance, and the burst-count comparison should therefore be interpreted as a qualitative rare-event diagnostic rather than a high-precision estimate.
}

These tests do not establish convergence of the long-time statistics. They show instead that the adaptive scheme
preserves bounded long-time dynamics and reproduces the principal fixed-step
statistical diagnostics at substantially lower recorded solver cost.

\section{Conclusion}\label{sec:conclusion}

We developed an IMEX-SDIRK2-mr-ccSAV method for the forced
two-dimensional Navier--Stokes equations in periodic vorticity form. The
method treats the viscous and mean-reversion terms implicitly and evaluates
the advection term explicitly. Its stage equations reduce to two shifted
elliptic solves and a scalar algebraic equation that is either cubic or linear. We proved stagewise existence
for every positive time step, separated this result from a sufficient
small-step condition for uniqueness, and established an unconditional
variable-step uniform-in-time enstrophy bound, i.e., a $L^\infty(0,\infty; L^2)$ estimate. Under additional regularity on the initial data and finite upper-step and step-ratio bounds, we also obtained  variable-step $L^\infty(0,\infty; H^1)$ and $L^\infty(0,\infty;H^2)$
estimates. In addition, we proved the second-order temporal accuracy of the scheme on any given finite-time interval, and establish the global-in-time smallness of the scalar auxiliary variable when $r(0)=0, \gamma>0$.

The detailed solvability, higher-regularity, and convergence results in this
paper are periodic and vorticity based. The same argument is
compatible with the kinetic-energy balance in primitive variables when the
spatial discretization supplies a skew-symmetric convective form and an
appropriate divergence constraint, but primitive-variable uniqueness,
convergence, and uniform $H^1$ stability remain open. Likewise, the adaptive
experiments demonstrate bounded dynamics and useful statistical fidelity,
but do not constitute a convergence result for invariant measures.

We also expect that the method utilized in this work in deriving the unconditional uniform-in-time estimates will carry over to higher-order RK type methods with positive $S$ such as those presented in \cite{liao2026longtime}.

A natural next step is the approximation of global attractors and stationary statistical properties. The Lax-type criteria developed for dissipative numerical dynamical systems require uniform dissipativity and compactness, together with finite-time convergence uniformly for initial data drawn from the relevant discrete attracting sets\cite{wang2010approximation,wang2016numerical}. These ideas have been applied to periodic Navier--Stokes and double-diffusive discretizations in\cite{wang2012efficient,tone2015double}. For the fixed-step specialization of the present method, the uniform $L^2$ estimate and the higher-regularity bounds provide important ingredients for such an analysis. The current $H^1$ and $H^2$ estimates, however, assume correspondingly regular initial data. A complete long-time dynamical-systems analysis would first require a discrete parabolic-smoothing argument that extends these results to eventual uniform $H^1$ and $H^2$ bounds for initial data in $L^s(\Omega)$, $s>2$, or in $H^\delta(\Omega)$, $0<\delta<1$. 
The variable-step and adaptive schemes require a nonautonomous or enlarged-state dynamical-systems framework are even more challenging and are left for future study.



\bibliographystyle{amsplain}
\bibliography{references}
\end{document}